\def\TT{\mathcal T}
\def\norm#1#2{\|#1\|_{#2}}%
\def\BbbR{{\mathbb R}}
\def\BbbN{{\mathbb N}}
\newcommand\bi{{\mathbf i}}
\def\supp{\operatorname*{supp}}
\def\IV{{J_h^{V_h}}}
\def\IM{{J_h^{M_h}}}
\def\Ihk{{J_h^k}}
\newcommand\diam[1]{\mbox{\rm diam}\,{#1}}
\newcommand\cutoff{\chi_{S_h}}
\newenvironment{numberedproof}[1]{{\bf Proof of #1:}}{{}\hfill{\hbox{$\Box$}}\par\bigskip}
\newenvironment{proof}{{\bf Proof:}}{{}\hfill{\hbox{$\Box$}}\par\bigskip}
\newcommand{\eex}{\hbox{}\hfill\rule{0.8ex}{0.8ex}}
\newcommand{\eremk}{\eex}
\newcommand{\slp}{\operatorname*{\widetilde V}}
\newcommand{\dlp}{\operatorname*{\widetilde K}}
\newcommand{\slo}{\operatorname*{V}}
\newcommand{\dlo}{\operatorname*{K}}
\newcommand{\adlo}{\operatorname*{K^\prime}}
\newcommand{\hyp}{\operatorname*{D}}
\newcommand{\opA}{\mathfrak A}
\newcommand{\intr}{{\mathrm{int}}}
\newcommand{\extr}{{\mathrm{ext}}}
\newcommand{\dual}{{\mathrm{dual}}}
\newcommand{\bidual}{{\mathrm{bidual}}}
\newtheorem{theorem}{Theorem}[section]
\newtheorem{assumption}[theorem]{Assumption}
\newtheorem{lemma}[theorem]{Lemma}
\newtheorem{remark}[theorem]{Remark}
\newtheorem{corollary}[theorem]{Corollary}
\title{Simultaneous quasi-optimal convergence rates in FEM-BEM coupling}
\begin{document}

\pagestyle{fancy}

\author{J.M.~Melenk,\footnotemark[2]
D. Praetorius,\footnotemark[2] B. Wohlmuth\footnotemark[3]}

\footnotetext[2]
{
Technische Universit\"at Wien, 
Institut f\"ur Analysis und Scientific Computing, 
Wiedner Hauptstr. 8-10, A-1040 Wien, 
melenk@tuwien.ac.at, dirk.praetorius@tuwien.ac.at 
}
\footnotetext[3]
{Technische Universit\"at M\"unchen,
M2 Zentrum Mathematik, 
Boltzmannstra{\ss}e 3,  D-85748 Garching,  
(wohlmuth@ma.tum.de)} 


\begin{abstract}
We consider the symmetric FEM-BEM coupling that connects two linear elliptic second order
partial differential equations posed in a bounded domain $\Omega$ and its complement,
where the exterior problem is restated as an integral equation on the
coupling boundary $\Gamma=\partial\Omega$.
 Under the assumption  that the corresponding 
transmission problem admits a shift theorem for data in $H^{-1+s}$, $s \in [0,s_0]$, $s_0 > 1/2$, we analyze 
the discretization by piecewise polynomials of degree $k$ for the domain variable 
and piecewise polynomials of degree $k-1$ for the flux variable on the coupling boundary. 
Given sufficient regularity we show that (up to logarithmic factors)  
the optimal convergence $O(h^{k+1/2})$ 
in the $H^{-1/2}(\Gamma)$-norm is obtained for the flux variable,
while classical arguments by C\'ea-type quasi-optimality and standard
approximation results provide only $O(h^k)$ for the overall error in the natural
product norm on $H^1(\Omega)\times H^{-1/2}(\Gamma)$.
\end{abstract}

\MOS{65N30; 65N38}

\keywords{FEM-BEM coupling, {\sl a priori} convergence analysis, transmission problem}

\maketitle
\thispagestyle{empty}

\section{Introduction.}
\label{sec:intro}

The coupling of a linear differential equation in an exterior domain with an equation 
in a domain $\Omega$ arises frequently in numerical computations. One way to tackle 
such a problem is to use a FEM-BEM (finite element--boundary element) coupling
procedure. Several techniques are available 
(see, e.g., \cite{aurada-feischl-fuehrer-karkulik-melenk-praetorius13} for an overview); 
typically, they involve the introduction of an extra unknown $\varphi$ on the 
coupling boundary $\Gamma := \partial\Omega$. Starting with the recent
contribution by Sayas~\cite{sayas2013}, the
stability and convergence analysis of the coupled 
problem is now fairly well-understood: for many coupling procedures, one has quasi-optimality 
in (natural) norms that involve {\em both} the primal variable $u$ on $\Omega$ and the 
variable $\varphi$ on $\Gamma$, 
see again \cite{aurada-feischl-fuehrer-karkulik-melenk-praetorius13}. In many situations, 
the best approximation properties of the spaces used to approximate $u$ and $\varphi$ 
do not match. An example for such a mismatch is the natural setting also used in the present paper: 
here, $u$ is approximated in $H^1(\Omega)$ by piecewise polynomials of degree $k$,
and $\varphi$ is approximated in $H^{-1/2}(\Gamma)$ by piecewise polynomials of degree $k-1$.
The optimal rates are therefore $O(h^k)$ and $O(h^{k+1/2})$, respectively. 
However, the standard
convergence theory based on the C\'ea-type quasi-optimality provides
only the following result:

\begin{quote}
{\bf Proposition (standard quasi-optimality).} If the sought solution $(u,\varphi)$ is smooth, 
then the FEM-BEM approximation $(u_h,\varphi_h)$ satisfies
\begin{align*}
 \norm{u-u_h}{H^1(\Omega)} + \norm{\varphi-\varphi_h}{H^{-1/2}(\Gamma)}
 \le C\,\inf_{v_h,\psi_h}\big(
 \norm{u-v_h}{H^1(\Omega)} + \norm{\varphi-\psi_h}{H^{-1/2}(\Gamma)}
 \big)
 =O(h^k).
\end{align*}
\end{quote}

\noindent
With a refined analysis, we show in the present paper 
that (up to logarithmic terms) convergence $O(h^{k+1/2})$ is obtained for the approximation
of $\varphi$ under suitable assumptions on the regularity of $u$ and $\varphi$ 
and on the boundary value problem; 
see Theorem~\ref{thm:approximation-of-varphi} for the precise statement.

\begin{quote}
{\bf Theorem.} If the sought solution $(u,\varphi)$ is smooth
and if the geometry and coefficients of the differential operator admit a suitable shift theorem, then 
the FEM-BEM approximation $(u_h,\varphi_h)$ satisfies
\begin{align*}
 \norm{u-u_h}{H^1(\Omega)} = O(h^k)
 \quad\text{and}\quad
 \norm{\varphi-\varphi_h}{H^{-1/2}(\Gamma)}
 = O(h^{k+1/2}(1+|\ln h|)),
\end{align*}
where the logarithmic term may even be dropped for $k\ge2$.
\end{quote}

\noindent
We will show this for the symmetric 
FEM-BEM coupling procedure which has independently been proposed by
Costabel~\cite{costabel88a} and Han~\cite{han90}. We believe, however, that 
an extension to other techniques such as the Johnson-N{\'e}d{\'e}lec coupling \cite{johnson-nedelec80} 
is possible. Overall, this gives a first mathematical answer to observations 
in~\cite{afp12,afkp12}, where (optimal) higher-order convergence of 
$\varphi$ was noted for adaptive FEM-BEM computations.

The present work is closely related to our previous works
\cite{melenk-wohlmuth12,melenk-rezaijafari-wohlmuth14,melenk-wohlmuth14a}, where 
the convergence of the Lagrange multiplier in mortar methods \cite{melenk-wohlmuth12}
and the convergence of surface fluxes \cite{melenk-rezaijafari-wohlmuth14} in mixed methods 
were studied. 
The unifying theme of these works is to obtain improved and even optimal convergence 
rates for the quantities associated with lower-dimensional manifolds. This entails a second link 
between all these works: they rely on the same analytical tools, namely, duality arguments 
that require the analysis of elliptic problems with right-hand sides that are supported by a 
thin neighborhood of some lower-dimensional manifold. The basic mechanism that allows 
us to exploit, in a quantifiable way, that the support of the right-hand sides is small, 
is the same one in the works  
\cite{melenk-wohlmuth12,melenk-rezaijafari-wohlmuth14,melenk-wohlmuth14a} and the present one. 
We mention that \cite{melenk-wohlmuth14a} uses a slightly different approach compared to \cite{melenk-wohlmuth12} and provides stronger results. 
The present work is closest to \cite{melenk-wohlmuth14a}.  

We close this introduction with some remarks on the techniques employed. 
As in our previous work, we employ regularity assertions in Besov spaces. A feature of
this approach is that it allows for a rigorous formulation of the regularity properties 
of relevant dual problems and permits us to separate the question of elliptic regularity from
FEM duality arguments as much as possible. 
Nevertheless, the use of Besov spaces is not 
essential and alternative approaches purely based on weighted Sobolev spaces are possible; 
we mention here \cite{apel-pfefferer-roesch12,apel-pfefferer-roesch14} and \cite{larson-massing14} 
as well as \cite{gastaldi-nochetto89} in the context of 
mixed methods. 
Another alternative approach opens up when changing the regularity requirements 
of the solution: in the present paper, we require the solution $u$ to be in 
the Besov space $B^{k+3/2}_{2,1}(\Omega)$; if instead $W^{k+1,\infty}(\Omega)$-regularity
is assumed, then techniques from $L^\infty$-estimates in FEM could be applied.  
We refer to \cite{gastaldi-nochetto89,wang89} for examples in this direction in the context of mixed methods. 

The paper is structured as follows: In Section~\ref{sec:prelim}, we introduce the 
model problem \eqref{eq:strongform}. The variational formulation with the symmetric
coupling is given in Section~\ref{section:weakform}. Our numerical analysis will
rely on duality arguments. The regularity theory for these dual problems, which turn out
to be classical transmission problems, is the topic of Sections~\ref{sec:dual} and \ref{sec:bidual}.
Section~\ref{sec:numerical-analysis} is devoted to the numerical analysis. The main
result is Theorem~\ref{thm:approximation-of-varphi}: 
Estimate~\eqref{eq:thm2} gives an error bound for the variable $u$ on a strip
of width $O(h)$ near the coupling boundary $\Gamma$. Estimate~\eqref{eq:thm1}
then employs this result to obtain the optimal convergence rate for the error
in the variable $\varphi$. The variational crime associated with  approximating the 
input data is assessed in Section~\ref{sec:randnaehe}.
Up to logarithmic terms, Theorem~\ref{thm:variational-crimes}
transfers the results of Theorem~\ref{thm:approximation-of-varphi} also to
this setting.
Section~\ref{sec:numerics} illustrates numerically the convergence
results of Theorem~\ref{thm:approximation-of-varphi} for several geometries. 

\section{Preliminaries and model problem.}
\label{sec:prelim}

\subsection{Notation and spaces.}
\label{sec:notation}

Let $\Omega \subset \BbbR^d$, $d \in \{2,3\}$, be a bounded Lipschitz domain
with boundary $\Gamma:=\partial\Omega$. We assume that the boundary is
polygonal/polyhedral domain with $N_\Gamma$ edges/faces $\Gamma_i$, $i=1,\ldots,N_\Gamma$. 
\footnote{The restriction to polygonal/polyhedral domains instead of smooth or piecewise smooth domains 
is not essential and due to our desire to use standard polynomial approximation results.}

For $s \in \BbbR$, we employ standard notation for the Sobolev spaces $H^s(\Omega)$ and $H^s(\Gamma_i)$, 
$i \in \{1,\ldots,N_\Gamma\}$, see, e.g., \cite{tartar07}. 
For $s > 0$, $s \not\in \BbbN_0$, we define the 
Besov spaces $B^s_{2,q}(\Omega)$ 
for  $q \in [1,\infty]$ by 
interpolation (the ``real'' method, also known as ``$K$-method'', \cite{tartar07,triebel95}): 
$$
B^s_{2,q}(\Omega) = (H^\sigma(\Omega),H^{\sigma+1}(\Omega))_{\theta,q}, 
\qquad \sigma = \lfloor s \rfloor, \quad 
\theta = s - \sigma. 
$$
We recall that for Banach spaces $X_1 \subset X_0$, the interpolation space $X_{\theta,q}= (X_0,X_1)_{\theta,q}$ 
with $\theta \in (0,1)$ and $q \in [1,\infty]$ is defined by the norm 
$\|\cdot\|_{X_{\theta,q}}$ with 
\begin{align}\label{eq:intnorm}
\|u\|_{X_{\theta,q}}:= \Big(\int_{t=0}^\infty \left| t^{-\theta} K(t,u)\right|^q \frac{dt}{t}\Big)^{1/q}, 
\quad \mbox{ for $q \in [1,\infty)$, }
\quad\mbox{ and }\quad
\|u\|_{X_{\theta,\infty}}:= \sup_{t > 0} t^{-\theta} K(t,u), 
\end{align}
where $K(t,u)= \inf_{v \in X_1} \|u - v\|_{X_0} + t \|v\|_{X_1}$.
We recall the interpolation estimate
\begin{align}\label{eq:intest}
 \|x\|_{X_{\theta,q}}
 \lesssim \|x\|_{X_0}^{1-\theta}\|x\|_{X_1}^\theta
 \quad\forall x\in X_1.
\end{align}%
For $\Gamma$, we also employ standard notation for the 
Sobolev spaces $H^s(\Gamma)$, $s \in [-1,1]$. 
For $s \ge 0$, we define the 
spaces $H^s_{pw}(\Gamma)\subset L^2(\Gamma)$ as broken spaces, i.e., 
we identify them with the product $\prod_{i=1}^{N_\Gamma} H^s(\Gamma_i)$. 
We introduce the nonstandard space 
$$
B^0_{2,\infty}(\Gamma) = (H^{-\varepsilon}(\Gamma),H^{\varepsilon}(\Gamma))_{1/2,\infty}, 
\qquad  \varepsilon \in (0,1] \mbox{ arbitrary.}
$$
(The precise choice of $\varepsilon$ is immaterial due to the 
reiteration theorem~\cite[Thm.~26.3]{tartar07}.)
Important roles in our analysis are played by the
distance function $\delta_\Gamma$, the regularized distance function 
$\widetilde \delta_\Gamma$,  and the strips $S_h$ near $\Gamma$ given by
\begin{eqnarray}
\label{eq:strip-delta} 
\delta_\Gamma(x) &:=& \operatorname*{dist}(x,\Gamma), \qquad \qquad 
\widetilde\delta_\Gamma(x):= h + \delta_\Gamma(x), \\
\label{eq:strip} 
S_h& :=& \{x \in \Omega\,|\, \delta_\Gamma(x) < h\}, 
\qquad h > 0. 
\end{eqnarray}
Naturally, properties of the trace operator $\gamma:H^1(\Omega) \rightarrow H^{1/2}(\Gamma) $ 
feature prominently in coupling procedures. We recall that $\gamma$ can be extended to $H^{1/2+\varepsilon}(\Omega)$ for all $\varepsilon > 0$ but not 
to $H^{1/2}(\Omega)$. It is, however, well-defined on the slightly smaller space 
$B^{1/2}_{2,1}(\Omega)\subset H^{1/2}(\Omega)$  
(see, e.g., \cite[Thm.~{2.9.3}]{triebel95}, \cite[Sec.~{32}]{tartar07}). 
We close this section with an embedding result that will be important to exploit additional
regularity of the solution and to make use of the smallness of the support of the right-hand side
of certain dual problems. 

\begin{lemma}
\label{lemma:weighted-embedding}
Let $\Omega \subset \BbbR^d$ be a bounded Lipschitz domain. 
Recall the regularized distance function
$\widetilde \delta_\Gamma:= \delta_\Gamma + h$ from~\eqref{eq:strip-delta}.
Then, 
\begin{eqnarray}
\label{eq:lemma:weighted-embedding-1}
\|\widetilde \delta_\Gamma^{-1/2+\varepsilon} z\|_{L^2(\Omega)} &\leq& 
\|\delta_\Gamma^{-1/2+\varepsilon} z\|_{L^2(\Omega)} \leq 
C \|z\|_{H^{1/2-\varepsilon}(\Omega)}
 \quad \forall\, 0<\varepsilon \le1/2 
\qquad \forall z \in H^{1/2-\varepsilon}(\Omega), \\
\label{eq:lemma:weighted-embedding-2}
\|\widetilde \delta_\Gamma^{-1/2} z\|_{L^2(\Omega)} &\leq& 
C |\ln h|^{1/2} \|z\|_{B^{1/2}_{2,1}(\Omega)} 
\qquad \forall z \in B^{1/2}_{2,1}(\Omega), \\
\label{eq:lemma:weighted-embedding-4}
\|\widetilde \delta_\Gamma^{-1/2-\varepsilon} z\|_{L^2(\Omega)} &\leq& 
C h^{-\varepsilon}\|z\|_{B^{1/2}_{2,1}(\Omega)}
\quad \forall \varepsilon > 0
\qquad \forall z \in B^{1/2}_{2,1}(\Omega), 
\end{eqnarray}
where $C>0$ depends only on $\Omega$ and $\varepsilon$.
\end{lemma}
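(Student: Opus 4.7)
The first inequality in \eqref{eq:lemma:weighted-embedding-1} is immediate since $\widetilde\delta_\Gamma\ge\delta_\Gamma$ and $-1/2+\varepsilon\le 0$. The second inequality is the classical weighted Hardy--Grisvard estimate at $\Gamma$. My plan is to localize to a tubular neighborhood $U_{\sigma_0}:=\{\delta_\Gamma<\sigma_0\}$ flattened via the normal parametrization $(y,\sigma)\in\Gamma\times(0,\sigma_0)\mapsto y+\sigma\,n(y)$; on $\Omega\setminus U_{\sigma_0}$ the weight is bounded and the contribution is at most $\sigma_0^{-1+2\varepsilon}\|z\|_{L^2(\Omega)}^2$. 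Inside $U_{\sigma_0}$, Fubini reduces the claim to the one-dimensional Hardy inequality $\int_0^{\sigma_0}\sigma^{-2s}|f(\sigma)|^2\,d\sigma\le C_s\|f\|_{H^s(0,\sigma_0)}^2$ for $s=1/2-\varepsilon\in[0,1/2)$, applied in the normal direction and integrated over $y\in\Gamma$, using that $H^s(\Omega)$ controls the $H^s$-norm of $z$ along normal fibers.

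For \eqref{eq:lemma:weighted-embedding-2} and \eqref{eq:lemma:weighted-embedding-4}, the plan is to combine the coarea formula with a uniform trace estimate on parallel surfaces. Setting $\Gamma_\sigma:=\{x\in\Omega:\delta_\Gamma(x)=\sigma\}$, the coarea formula yields
\begin{align*}
\|\widetilde\delta_\Gamma^{-\alpha}z\|_{L^2(\Omega)}^2 = \int_0^{\diam\Omega}(h+\sigma)^{-2\alpha}\,\|z\|_{L^2(\Gamma_\sigma)}^2\,d\sigma,
\end{align*}
so the task reduces to establishing $\|z\|_{L^2(\Gamma_\sigma)}\le C\|z\|_{B^{1/2}_{2,1}(\Omega)}$ with $C$ independent of $\sigma\in[0,\diam\Omega)$. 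For $\sigma\in[0,\sigma_0]$, I would use the tubular parametrization of the previous paragraph together with the fact that the normal shift $z\mapsto z(\cdot+\sigma n(\cdot))$ is uniformly bounded on $B^{1/2}_{2,1}(\Omega)$, so that the trace on $\Gamma_\sigma$ reduces to the standard boundary trace $\gamma:B^{1/2}_{2,1}(\Omega)\to L^2(\Gamma)$ recalled just above the statement of the lemma. For $\sigma\ge\sigma_0$, $\Gamma_\sigma$ lies in the interior of $\Omega$, $(h+\sigma)^{-2\alpha}$ is bounded by $\sigma_0^{-2\alpha}$, and $\int_{\sigma_0}^{\diam\Omega}\|z\|_{L^2(\Gamma_\sigma)}^2\,d\sigma\le\|z\|_{L^2(\Omega)}^2\le C\|z\|_{B^{1/2}_{2,1}(\Omega)}^2$.

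Inserting this uniform trace bound and evaluating the weight integrals finishes the proof: $\int_0^{\diam\Omega}(h+\sigma)^{-1}\,d\sigma=\ln(1+\diam\Omega/h)\le C|\ln h|$ produces the $|\ln h|^{1/2}$ factor in \eqref{eq:lemma:weighted-embedding-2}, while $\int_0^{\diam\Omega}(h+\sigma)^{-1-2\varepsilon}\,d\sigma\le\tfrac{1}{2\varepsilon}\,h^{-2\varepsilon}$ produces the $h^{-\varepsilon}$ factor (with an $\varepsilon$-dependent constant) in \eqref{eq:lemma:weighted-embedding-4}. The main technical obstacle is the uniform-in-$\sigma$ trace estimate on the parallel surfaces $\Gamma_\sigma$: because $B^{1/2}_{2,1}$ is exactly the critical space on which the boundary trace into $L^2(\Gamma)$ is bounded, the argument cannot afford any $\sigma$-dependence in the trace constant, which is why flattening the tubular neighborhood and exploiting translation invariance of the $B^{1/2}_{2,1}$-norm there seems preferable to a generic trace theorem applied to the nested Lipschitz subdomains $\{\delta_\Gamma>\sigma\}$.
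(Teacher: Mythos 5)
Your overall strategy — reduce the weight estimates to a uniform trace bound $\sup_\sigma\|z\|_{L^2(\Gamma_\sigma)}\lesssim\|z\|_{B^{1/2}_{2,1}(\Omega)}$ near $\Gamma$ and integrate the radial weight — is essentially the same idea as the paper's, which simply cites Grisvard for~\eqref{eq:lemma:weighted-embedding-1} and [Lemma~2.1 of li-melenk-wohlmuth-zou10] (1D Sobolev embedding plus local flattening) for~\eqref{eq:lemma:weighted-embedding-2} and~\eqref{eq:lemma:weighted-embedding-4}. The coarea reformulation and the breakup at $\sigma_0$ are both correct, and the weight integrals $\int_0^{\diam\Omega}(h+\sigma)^{-1}\,d\sigma\lesssim|\ln h|$ and $\int_0^{\diam\Omega}(h+\sigma)^{-1-2\varepsilon}\,d\sigma\lesssim\varepsilon^{-1}h^{-2\varepsilon}$ give exactly the stated factors.

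One technical point deserves care: you invoke a tubular neighborhood parametrization $(y,\sigma)\mapsto y+\sigma\,n(y)$ and uniform boundedness of normal shifts on $B^{1/2}_{2,1}(\Omega)$. On a merely Lipschitz domain this map is not a bi-Lipschitz parametrization of a neighborhood of $\Gamma$ (the normal $n$ is only defined a.e.\ and the exponential map need not be injective), so the claimed translation invariance is not available as stated. The standard fix, and what the reference cited by the paper actually does, is to cover $\Gamma$ by finitely many Lipschitz graph patches, flatten each via $(x',x_d)\mapsto(x',x_d-\phi_j(x'))$ (a bi-Lipschitz change of variables under which $B^{1/2}_{2,1}$ and the weight $\widetilde\delta_\Gamma\sim h+x_d$ are preserved up to constants), and apply the 1D embedding $B^{1/2}_{2,1}\hookrightarrow L^\infty$ in the $x_d$-direction — equivalently the uniform horizontal-slice trace estimate in a half-space, where genuine translation invariance holds. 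Your coarea formula with level sets of $\delta_\Gamma$ is correct as a formula, but proving the uniform trace bound on those level sets directly is messier than flattening first. With that substitution, your argument is sound; as written, it silently assumes more boundary regularity than the lemma's Lipschitz hypothesis permits.
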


\begin{proof}
The estimate involving $\delta_\Gamma$ in \eqref{eq:lemma:weighted-embedding-1}
can be found, for example, in \cite[Thm.~{1.4.4.3}]{grisvard85a}. The estimates
\eqref{eq:lemma:weighted-embedding-2}, \eqref{eq:lemma:weighted-embedding-4} follow from
1D Sobolev embedding theorems and locally flattening the boundary $\Gamma$ in the same
way as it is done in the proof of \cite[Lemma~{2.1}]{li-melenk-wohlmuth-zou10}.
\end{proof}
 
\subsection{Coupling model problem.}
\label{sec:modprob}

We denote by $n$ the normal vector on $\Gamma$ pointing into $\Omega^\extr:=\BbbR^d \setminus \overline\Omega$. 
For $d = 2$, we assume the scaling condition $\diam{(\Omega)} < 1$ so that the single layer operator 
(defined in~\eqref{eq:boundary-integral-operators} below)
is a bijection and, in fact, $H^{-1/2}(\Gamma)$-elliptic. We emphasize that $\Omega$ is connected by definition. 

Let $\opA \in L^\infty(\Omega; \BbbR^{d \times d})$ be pointwise symmetric positive definite
and satisfy $\opA \ge \alpha_0 > 0$ for some $\alpha_0 > 0$. 
We will require 
$\opA \in C^{0,1}(\overline{\Omega})$ for lowest order discretizations (the case $k=1$ below) and 
$\opA \in C^{1,1}(\overline{\Omega})$ for higher order discretizations ($k > 1$ in the following). We mention
in passing that the shift theorems Assumptions~\ref{assumption:shift-theorem} 
and \ref{assumption:shift-theorem-bidual-problem} also implicitly contain certain regularity requirements on $\opA$.

For given data $(f,u_0,\phi_0) \in L^2(\Omega) \times H^{1/2}(\Gamma) \times H^{-1/2}(\Gamma)$,
we consider the linear interface problem
\begin{subequations}\label{eq:strongform}
\begin{align}
    -\nabla \cdot (\opA\nabla u) &= f &&\quad\text{in }\Omega,
    \label{eq:strongform:interior}\\
    -\Delta u^\extr &= 0 &&\quad\text{in }\Omega^{\rm ext},
    \label{eq:strongform:exterior}\\
    u- u^\extr &= u_0 &&\quad\text{on }\Gamma,
    \label{eq:strongform:trace}\\
    (\opA\nabla u -\nabla u^{\extr})\cdot n &= \phi_0 &&\quad\text{on }\Gamma,
    \label{eq:strongform:normal}\\
    u^\extr &= O(|{x}|^{-1}) &&\quad\text{as } |x|\rightarrow \infty.
    \label{eq:strongform:radiation}
\end{align}
\end{subequations}
As usual, these equations are understood in the weak sense, i.e., we look for a
solution $(u,u^\extr)\in H^1(\Omega)\times H_{\rm loc}^1(\Omega^\extr)$, where 
$H_{\rm loc}^1(\Omega^\extr) := \{v \,\vert\, v\in H^1(K)\, ,
K\subseteq {\Omega^\extr \cup \Gamma } \,\text{ compact} \}$, 
and \eqref{eq:strongform:trace} and \eqref{eq:strongform:normal}
are understood in $H^{1/2}(\Gamma)$ and $H^{-1/2}(\Gamma)$, respectively.
It is well-known (see also Lemma~\ref{lemma:solvability} and Section~\ref{section:weakform}
below) that problem \eqref{eq:strongform} admits a unique solution in
3D. In 2D, the given data have to fulfill the compatibility
condition
\begin{align}\label{eq:compatibility2d}
 \langle f,1\rangle_\Omega + \langle\phi_0,1\rangle_\Gamma = 0
\end{align}
to ensure the behavior \eqref{eq:strongform:radiation}  of the solution at infinity.
Alternatively, one may relax the radiation condition~\eqref{eq:strongform:radiation}
to $u^{\extr} = O(\log|x|)$ as $|x|\to\infty$, see Lemma~\ref{lemma:weak-form-equals-strong-form} below.

\subsection{Operators.}
\label{sec:operators}

Let $G$ be the Green's function for the Laplacian, i.e., 
\begin{equation}
\label{eq:green}
G(x,y) = 
- \frac{1}{2\pi} \log |x - y| \quad \mbox{ if $d = 2$,} 
\qquad \qquad  
G(x,y) = 
\frac{1}{4\pi} \frac{1}{|x - y|} \quad  \mbox{ if $d = 3$.} 
\end{equation}
With 
$\partial_{n(y)}^{\rm int}$ being the interior normal derivative at $y\in\Gamma$,
we define the single layer and double layer potentials $\slp$ and $\dlp$
by 
\begin{align*}
(\slp\phi)(x)
&:=\int_\Gamma G(x,y)\phi(y)\,dS(y),
& (\dlp u)(x)
&:=\int_\Gamma \partial_{n(y)}^\intr G(x,y)u(y)\,dS(y)
 \quad\text{for }x\in\BbbR^d\setminus\Gamma.
\end{align*}
Recall that $\gamma = \gamma^\intr:H^1(\Omega)\to H^{1/2}(\Gamma)$ denotes
the (interior) trace operator. Similarly, $\gamma^\extr:H^1_{\rm loc}(\Omega^\extr)\to H^{1/2}(\Gamma)$ 
denotes the exterior trace operator.
The \emph{single layer operator}
$\slo:H^{-1/2}(\Gamma)\rightarrow H^{1/2}(\Gamma)$,  
the \emph{double layer operator} 
$\dlo :H^{1/2}(\Gamma)\rightarrow H^{1/2}(\Gamma)$, 
the \emph{adjoint double layer operator}
$\adlo:H^{-1/2}(\Gamma)\rightarrow H^{-1/2}(\Gamma)$, 
and the \emph{hypersingular operator}
$\hyp:H^{1/2}(\Gamma)\rightarrow H^{-1/2}(\Gamma)$
are defined by
\begin{align}
\label{eq:boundary-integral-operators}
\begin{aligned}
& \slo \phi := \gamma^\intr(\slp \phi) = \gamma^\extr(\slp\phi),
&\quad&
\hyp u := -\partial_n^\intr (\dlp u) = -\partial_n^\extr (\dlp u),\\
& \adlo\phi := \partial_n^\intr (\slp \phi) - 1/2\phi
= \partial_n^\extr (\slp \phi) + 1/2\phi, 
&\quad&
\dlo u := \gamma^\intr(\dlp u) + 1/2u
= \gamma^\extr(\dlp u) - 1/2u.
\end{aligned}
\end{align}
Here and throughout, we define for sufficiently smooth $v$
the exterior normal derivative
$$
\partial_n^\extr v = \gamma^\extr (\nabla v) \cdot n. 
$$

\begin{lemma}
\label{lemma:calderon}
Let $u \in H^{1/2}(\Gamma)$, $\varphi \in H^{-1/2}(\Gamma)$. Define 
\begin{equation}
\label{eq:uextr}
u^\extr:= \dlp u - \slp \varphi \qquad \text{ in }\Omega^\extr. 
\end{equation}
Then, the condition $\slo \varphi + (1/2-\dlo) u = 0$ implies 
the following assertions (\ref{item:lemma:calderon-i})--(\ref{item:lemma:calderon-ii}).
\begin{enumerate}[(i)]
\item 
\label{item:lemma:calderon-i}
$\gamma^\extr u^\extr = u$ and 
$\partial_n^\extr u^\extr = \varphi$. 
\item  
\label{item:lemma:calderon-ii}
$u^\extr$ satisfies the exterior Calder{\'o}n system: 
\begin{align}
\label{extr Calderon I}
\quad\gamma^\extr u^\extr 
& = (1/2 + \dlo)(\gamma^\extr u^\extr) 
- \slo(\partial_n^\extr u^\extr), \\
\label{extr Calderon II}
\quad\partial_n^\extr u^\extr 
& = -\hyp(\gamma^\extr u^\extr) 
+ (1/2 - \adlo)(\partial_n^\extr u^\extr). 
\end{align}
\end{enumerate}
\end{lemma}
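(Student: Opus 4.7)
The key idea is to extend the definition of $u^\extr$ to the interior by setting $v := \widetilde K u - \widetilde V \varphi$ on all of $\BbbR^d\setminus\Gamma$, so that $v|_{\Omega^\extr} = u^\extr$ by definition. Since the layer potentials are harmonic off $\Gamma$ and map $H^{1/2}(\Gamma)\times H^{-1/2}(\Gamma)$ continuously into $H^1_{\mathrm{loc}}(\BbbR^d\setminus\Gamma)$, the restriction $v|_\Omega$ lies in $H^1(\Omega)$ and is harmonic there. Using the interior jump conventions built into \eqref{eq:boundary-integral-operators}, namely $\gamma^\intr(\widetilde K u) = (K - 1/2)u$ and $\gamma^\intr(\widetilde V \varphi) = V\varphi$, the assumption $V\varphi + (1/2-K)u = 0$ rearranges precisely to $\gamma^\intr v = (K-1/2)u - V\varphi = 0$. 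Interior Dirichlet uniqueness for $-\Delta$ on $\Omega$ then forces $v\equiv 0$ in $\Omega$ and, in particular, $\partial_n^\intr v = 0$.

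Part~(\ref{item:lemma:calderon-i}) is now immediate from the jumps across $\Gamma$ encoded by the same definitions: the single layer has continuous trace but its normal derivative jumps by $\varphi$, while the double layer trace jumps by $u$ and its normal derivative is continuous. Concretely,
\begin{align*}
 \gamma^\extr v - \gamma^\intr v &= (K+1/2)u - (K-1/2)u = u,\\
 \partial_n^\extr v - \partial_n^\intr v &= -\bigl[(K'-1/2)\varphi - (K'+1/2)\varphi\bigr] = \varphi,
\end{align*}
so $\gamma^\extr u^\extr = u$ and $\partial_n^\extr u^\extr = \varphi$.

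Part~(\ref{item:lemma:calderon-ii}) is then an algebraic consequence of (\ref{item:lemma:calderon-i}). Applying $\gamma^\extr$ and $\partial_n^\extr$ directly to the defining identity $u^\extr = \widetilde K u - \widetilde V\varphi$ and expanding via \eqref{eq:boundary-integral-operators} gives
\begin{align*}
 \gamma^\extr u^\extr &= (1/2+K)\,u - V\varphi,\\
 \partial_n^\extr u^\extr &= -D u + (1/2-K')\,\varphi;
\end{align*}
replacing $u$ and $\varphi$ on the right by $\gamma^\extr u^\extr$ and $\partial_n^\extr u^\extr$ via (\ref{item:lemma:calderon-i}) turns these into \eqref{extr Calderon I} and \eqref{extr Calderon II}, respectively.

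I do not anticipate a serious obstacle: every step is either a standard jump relation for the Laplace layer potentials or a routine appeal to interior Dirichlet uniqueness. The only point requiring attention is correctly tracking the signs prescribed by the conventions in \eqref{eq:boundary-integral-operators}; once those are respected, the hypothesis $V\varphi + (1/2-K)u = 0$ is exactly what is needed to make the interior Dirichlet datum of $v$ vanish, which triggers the whole argument.
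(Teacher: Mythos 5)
Your proof is correct, and it follows a genuinely different and arguably cleaner route than the paper. The paper obtains only $\gamma^\extr u^\extr = u$ by direct algebra, then invokes the exterior second representation formula $u^\extr = \dlp(\gamma^\extr u^\extr) - \slp(\partial_n^\extr u^\extr)$ (a fact cited to an external reference, with the remark that in 2D it requires non-trivial work because of the logarithmic behavior at infinity), derives the exterior Calder\'on system (ii) from that, and finally recovers $\partial_n^\extr u^\extr = \varphi$ via the bijectivity of $\slo$. You instead extend the potential $v := \dlp u - \slp\varphi$ to the interior, observe that the hypothesis $\slo\varphi + (1/2-\dlo)u = 0$ is exactly $\gamma^\intr v = 0$, and kill $v|_\Omega$ by interior Dirichlet uniqueness; all of part (i) then drops out of the jump relations at once, and (ii) becomes a two-line substitution. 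The benefit of your route is that it is purely local to $\overline\Omega$ and never touches the exterior representation formula or the behavior at infinity, so the $d=2$ delicacy disappears; the cost is that you invoke well-posedness of the interior Dirichlet problem, which the paper's approach does not need. Both proofs ultimately rest on the same sign conventions in \eqref{eq:boundary-integral-operators}, and you track them correctly.
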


\begin{proof}
Taking the exterior trace in~\eqref{eq:uextr}, we obtain
$$
\gamma^\extr u^\extr = (1/2 + \dlo )u  - \slo \varphi  = u + (-1/2 + \dlo )u - \slo \varphi = u. 
$$
A calculation (which is non-trivial for the 2D case) 
shows that $u^\extr$ satisfies 
the following, second representation formula (see, e.g., the proof of \cite[Lemma~{2.3}]{aurada-melenk-praetorius13a}
for details): 
$$
u^\extr = \dlp \gamma^\extr u^\extr - \slp \partial_n^\extr u^\extr. 
$$
The exterior Calder{\'o}n system then follows from taking the trace and the trace of the (exterior)
normal derivative. Finally, the bijectivity of $\slo$ implies
$\partial_n^\extr u^\extr = \varphi$.  
\end{proof}

\subsection{Bilinear forms.} 
\label{sec:bilinear-forms}

To state the FEM-BEM coupling~\eqref{eq:primal-block-system}
for the model problem~\eqref{eq:strongform},
we define the following four bilinear forms:
\begin{align*}
a(u,v) &:= \langle \opA \nabla u ,\nabla v \rangle_\Omega, 
& 
\widetilde a(u,v) &:= a(u,v) + \langle \hyp u,v\rangle_\Gamma, \\
c(\varphi,\psi) &:= \langle \slo \varphi, \psi\rangle_\Gamma, 
& 
b(u,\psi) &:= \langle (1/2 - \dlo) u,\psi\rangle_\Gamma.  
\end{align*}
The bilinear form $c(\cdot,\cdot)$ induces a norm that is equivalent to the $H^{-1/2}(\Gamma)$-norm: 
$\|\psi\|_V^2:= c(\psi,\psi) \sim \|\psi\|^2_{H^{-1/2}(\Gamma)}$. 
For bounded linear functionals $L_1:H^1(\Omega)\to\mathbb R$ and $L_2:H^{-1/2}(\Gamma)\to\mathbb R$, 
we consider the block system 
\begin{subequations}
\label{eq:primal-block-system}
\begin{eqnarray}
\widetilde a(u,v) - b(v,\varphi) &=& L_1(v)  \qquad \forall v \in H^1(\Omega),\\
b(u,\psi) + c(\varphi,\psi) &=& L_2(\psi)  \qquad \forall \psi \in H^{-1/2}(\Gamma).
\end{eqnarray}
We set 
\begin{align}
X := H^1(\Omega) \times H^{-1/2}(\Gamma). 
\end{align}
\end{subequations}
The Galerkin formulation is obtained in the usual fashion: For a conforming 
subspace
\begin{subequations}
\label{eq:primal-block-system:h}
\begin{align}
X_h := V_h \times M_h \subset X,
\end{align}%
we define $(u_h,\varphi_h) \in X_h$ by requiring that 
\eqref{eq:primal-block-system} be satisfied with the spaces 
$H^1(\Omega)$ and $H^{-1/2}(\Gamma)$ replaced with $V_h$ and $M_h$, i.e.,
\begin{eqnarray}
\widetilde a(u_h,v) - b(v,\varphi_h) &=& L_1(v)  \qquad \forall v \in V_h,\\
b(u_h,\psi) + c(\varphi_h,\psi) &=& L_2(\psi)  \qquad \forall \psi \in M_h.
\end{eqnarray}
\end{subequations}%
We note that both the system \eqref{eq:primal-block-system} and its discrete counterpart have 
unique solutions for any $(L_1,L_2) \in X^\prime$ 
by coercivity properties of the  left-hand side of \eqref{eq:primal-block-system}, which are collected 
in the following lemma. 

\begin{lemma}
\label{lemma:solvability}
The bilinear form 
$\displaystyle 
A((u,\varphi); (v,\psi)):= \widetilde a(u,v) - b(v,\phi) + b(u,\psi) + c(\varphi,\psi)
$
satisfies: 
\begin{enumerate}[(i)]
\item
\label{lemma:solvability:item:i}
$A(\cdot,\cdot)$ is semi-definite: 
$\displaystyle 
A((u,\varphi);(u,\varphi)) \ge C \Big[ \|\nabla u\|^2_{L^2(\Omega)} + \|\varphi\|^2_V\Big]
$, where $C>0$ depends only on $\Gamma$ and the coercivity constant $\alpha_0$ of $\mathfrak A$.
\item 
\label{lemma:solvability:item:ii}
$A(\cdot,\cdot)$ satisfies a G{\aa}rding inequality. 
\item 
\label{lemma:solvability:item:iii}
The operator ${\mathbf A}:X \rightarrow X^\prime$ 
induced by $A(\cdot,\cdot)$ is injective. 
\item
\label{lemma:solvability:item:iv}
The operator ${\mathbf A}:X \rightarrow X^\prime$ induced by $A(\cdot,\cdot)$
satisfies an inf-sup condition on $X$.
\item 
\label{lemma:solvability:item:vi}
Fix $\xi \in H^{-1/2}(\Gamma)$ with $\langle \xi,1\rangle_\Gamma \ne 0$
(in particular, $\xi \equiv 1$ is admissible). 
For any conforming discretization $X_h\subset X$ that contains 
$(0,\xi)$, the discrete inf-sup condition is (uniformly) satisfied, i.e., 
the discrete inf-sup constant depends solely on $\mathfrak A$ and the choice of 
$\xi$,
but is independent of $h$.
In particular, \eqref{eq:primal-block-system} as well as~\eqref{eq:primal-block-system:h} 
admit unique solutions $(u,\phi)\in X$ and $(u_h,\phi_h)\in X_h$. Moreover,
there is a constant $C > 0$ depending only on $\mathfrak A$ and $\xi$, such that 
the following quasi-optimality result holds for the solutions $(u,\phi)$ of 
\eqref{eq:primal-block-system} and 
its discrete approximation 
$(u_h,\phi_h) \in X_h$:  
\begin{equation}
\label{eq:quasi-optimality}
\|u - u_h\|_{H^1(\Omega)} + \|\varphi - \varphi_h\|_{V} \leq C \inf_{(v_h,\psi_h) \in X_h} 
\left[\|u - v_h\|_{H^1(\Omega)} + \|\varphi - \psi_h\|_{V}\right]. 
\end{equation}
\end{enumerate}
\end{lemma}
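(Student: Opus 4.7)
The plan is to dispatch items (i)--(iv) via standard Costabel/Han arguments and to devote the main effort to the uniform discrete inf-sup in (v), whose subtle point is the null direction of the coercive part in the $u$-component.

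For (i), I would test $A$ with its first argument: the off-diagonal $b$-terms cancel to leave $\widetilde a(u,u)+c(\varphi,\varphi)$. Pointwise ellipticity of $\opA$ yields $\langle\opA\nabla u,\nabla u\rangle\ge\alpha_0\|\nabla u\|_{L^2(\Omega)}^2$, the hypersingular operator $\hyp$ is positive semi-definite on $H^{1/2}(\Gamma)$, and $\slo$ is $V$-elliptic (in 2D the diameter condition is used), so (i) follows. Adding $K\|u\|_{L^2(\Omega)}^2$ to (i) and using $\|\nabla u\|_{L^2}^2+\|u\|_{L^2}^2\sim\|u\|_{H^1}^2$ gives the G{\aa}rding inequality (ii).

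For (iii) and (iv), assume $\mathbf A(u,\varphi)=0$. Item (i) forces $\nabla u\equiv 0$ and $\varphi=0$, so $u=c$ is constant; testing the remaining equation $A((c,0);(0,\psi))=c\langle(1/2-\dlo)1,\psi\rangle_\Gamma=0$ for all $\psi\in H^{-1/2}(\Gamma)$ gives $c\cdot(1/2-\dlo)1=0$ in $H^{1/2}(\Gamma)$. By Lemma~\ref{lemma:calderon} applied to the exterior capacitary potential (the exterior harmonic function with Dirichlet datum $1$ and the prescribed decay/growth), $(1/2-\dlo)1=-\slo\varphi_{\mathrm{cap}}$ with $\varphi_{\mathrm{cap}}\neq 0$, hence $c=0$; this yields injectivity (iii). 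For (iv), the compactness of $H^1(\Omega)\hookrightarrow L^2(\Omega)$ turns the G{\aa}rding correction in (ii) into a compact perturbation, making $\mathbf A\colon X\to X'$ Fredholm of index zero; injectivity then upgrades this to bijectivity, which is equivalent to the continuous inf-sup condition on $X$.

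The main obstacle is (v), because the coercive part of $A$ annihilates constant modes in the $u$-component and a naive energy estimate therefore gives a non-uniform constant. The key identity I would establish is
\[
\langle(1/2-\dlo)1,\xi\rangle_\Gamma \;=\; \langle\xi,1\rangle_\Gamma,
\]
which follows from adjointness $\langle(1/2-\dlo)1,\xi\rangle_\Gamma=\langle 1,(1/2-\adlo)\xi\rangle_\Gamma$, the jump relation $(1/2-\adlo)\xi=-\partial_n^\extr\slp\xi$, and an application of Green's theorem on $\Omega^\extr\cap B_R$ together with the decay of $\slp\xi$ at infinity (in 2D the diameter hypothesis handles the logarithmic behaviour). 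Given the identity, the uniform discrete inf-sup is then established as follows: for any $(u_h,\varphi_h)\in X_h$, decompose $u_h=\bar u_h+c_h$ with $c_h$ constant and $\bar u_h$ of zero mean on $\Gamma$, so that Poincar\'e yields $\|\bar u_h\|_{H^1(\Omega)}\lesssim\|\nabla u_h\|_{L^2(\Omega)}$. Testing $A$ with $(u_h,\varphi_h)$ controls $\|\nabla u_h\|_{L^2}^2+\|\varphi_h\|_V^2$ by (i); testing with $(0,\xi)\in X_h$ and applying the identity isolates $c_h\langle\xi,1\rangle_\Gamma$, while the remainder $\langle(1/2-\dlo)\bar u_h,\xi\rangle_\Gamma+c(\varphi_h,\xi)$ is absorbed via Young's inequality with a small parameter. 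A suitable convex combination of the two test directions yields the uniform lower bound $\|u_h\|_{H^1(\Omega)}^2+\|\varphi_h\|_V^2$ with a constant depending only on $\opA$, $\Omega$, and $\xi$. Existence and uniqueness of $(u,\varphi)$ and of $(u_h,\varphi_h)$, as well as the quasi-optimality~\eqref{eq:quasi-optimality}, then follow by the standard Babu{\v s}ka--Brezzi and C\'ea arguments.
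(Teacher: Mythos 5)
Your treatment of items (i)--(iv) follows essentially the paper's route. The one place you make extra work for yourself is (iii): once $u\equiv c$ and $\varphi=0$, the paper just invokes the classical identity $(-1/2+\dlo)1=-1$, equivalently $(1/2-\dlo)1=1$, so that $b(c,\psi)=c\langle 1,\psi\rangle_\Gamma$ must vanish for all $\psi$, forcing $c=0$. Your detour through the exterior capacitary potential and Lemma~\ref{lemma:calderon} reaches the same conclusion, and is correct (if $\varphi_{\mathrm{cap}}=0$ then $u^\extr=\dlp 1$ has exterior trace $0$, contradicting $\gamma^\extr u^\extr=1$), but it is a roundabout re-derivation of that one fact. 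The same remark applies to the ``key identity'' in (v): $\langle(1/2-\dlo)1,\xi\rangle_\Gamma=\langle\xi,1\rangle_\Gamma$ is an immediate consequence of $(1/2-\dlo)1=1$, so the adjointness plus Green's-theorem computation is not needed.

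The genuine difference is item (v). The paper does not prove the uniform discrete inf-sup at all; it cites \cite{aurada-feischl-fuehrer-karkulik-melenk-praetorius13}, and only records the resulting quasi-optimality \eqref{eq:quasi-optimality} as a standard consequence. Your direct argument --- split $u_h=\bar u_h+c_h$, use coercivity from (i) to control the seminorm and $\|\varphi_h\|_V$, and use the extra test direction $(0,\xi)$ to catch the constant $c_h$ --- is the natural mechanism and is presumably also what the cited reference does. One caveat on precision: a \emph{convex combination} of the two test directions with fixed weights cannot yield the required quadratic lower bound, because $A((u_h,\varphi_h);(0,\xi))$ contributes $c_h\langle\xi,1\rangle_\Gamma$, which is only linear in $c_h$, while the coercive part delivers quadratic control of $\|\nabla u_h\|_{L^2}^2+\|\varphi_h\|_V^2$. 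You must instead test with $(u_h,\varphi_h)+t(0,\xi)$ and choose the scalar $t$ proportional to $c_h\operatorname{sign}\langle\xi,1\rangle_\Gamma$ with a small absolute constant (or, equivalently, argue by a case distinction on whether the coercive part is large or small relative to $\|(u_h,\varphi_h)\|_X$); then Young's inequality absorbs the remainder $\langle(1/2-\dlo)\bar u_h,\xi\rangle_\Gamma + c(\varphi_h,\xi)$ as you describe, and $\|(v_h,\psi_h)\|_X\lesssim\|(u_h,\varphi_h)\|_X$ since $|c_h|\lesssim\|u_h\|_{H^1(\Omega)}$. With that correction the sketch is sound and self-contained, which is more than the paper itself provides.
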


\begin{proof}
Obviously, \eqref{lemma:solvability:item:i} is satisfied. 
The observation~\eqref{lemma:solvability:item:ii} that ${\mathbf A}$ satisfies a G{\aa}rding inequality
was first made in \cite{carstensen-stephan95a}. (It is also found in the seminal works~\cite{costabel88a,han90}, 
where an additional Dirichlet boundary is assumed.) Together with the injectivity
statement of \eqref{lemma:solvability:item:iii}, the inf-sup condition~\eqref{lemma:solvability:item:iv}
for $A(\cdot,\cdot)$ holds.
Item~\eqref{lemma:solvability:item:vi} is shown in 
\cite{aurada-feischl-fuehrer-karkulik-melenk-praetorius13}. 
The quasi-optimality assertion \eqref{eq:quasi-optimality} is a consequence of the 
(uniform) discrete inf-sup condition. 

Finally, let us discuss the injectivity~\eqref{lemma:solvability:item:iii} of ${\mathbf A}$. Starting from 
$$
A((u,\varphi); (v,\psi)) = 0 \qquad \forall (v,\psi) \in X,
$$
we get from \eqref{lemma:solvability:item:i} that $u$ is constant 
($\Omega$ is connected!) and $\varphi = 0$. This implies 
in view of $b(u,\psi) = 0$ for all $\psi$ and the well-known fact $(-1/2 + K) 1 = -1$ that 
$$
0 = (1/2 - \dlo) u  \stackrel{u \ const}{=} u.
$$
\end{proof}


\subsection{Weak formulation and Galerkin approximation.}
\label{section:weakform}

For the original problem \eqref{eq:strongform}, it is convenient to 
introduce the linear forms 
\begin{subequations}
\label{eq:rhs-weak}
\begin{eqnarray}
\label{eq:rhs-weak-1}
L_1(v) &:=& \langle f,v\rangle_\Omega + \langle \phi_0 + \hyp u_0,v\rangle_\Gamma,\\
\label{eq:rhs-weak-2}
L_2(\psi) &:=& \langle \psi,(1/2-\dlo) u_0\rangle_\Gamma. 
\end{eqnarray}
\end{subequations}
Our weak formulation of \eqref{eq:strongform} is: 
Find $(u,\varphi) \in X$ such that 
\begin{subequations}
\label{eq:weak-form}
\begin{eqnarray}
\widetilde a(u,v) - b(v,\varphi) &=& L_1(v) \qquad \forall v \in H^1(\Omega), \\
b(u,\psi) + c(\varphi,\psi) &=& L_2(\psi) \qquad \forall \psi \in H^{1/2}(\Gamma) . 
\end{eqnarray}
\end{subequations}
The Galerkin approximation is correspondingly given by 
\begin{subequations}
\label{eq:weak-form-FEM}
\begin{eqnarray}
\widetilde a(u,v) - b(v,\varphi) &=& L_1(v) \qquad \forall v \in V_h, \\
b(u,\psi) + c(\varphi,\psi) &=& L_2(\psi) \qquad \forall \psi \in M_h.
\end{eqnarray}
\end{subequations}
By Lemma~\ref{lemma:solvability}, we have unique solvability of \eqref{eq:weak-form}. The following lemma clarifies in what sense it solves
\eqref{eq:strongform}: 

\begin{lemma}
\label{lemma:weak-form-equals-strong-form}
Let $(u,\varphi) \in X$ solve \eqref{eq:rhs-weak}--\eqref{eq:weak-form}. Define 
the solution $u^\extr$ in $\Omega^\extr$ by 
\begin{equation}
\label{eq:uextr-with-jumps}
u^\extr := \dlp (u - u_0) - \slp \varphi. 
\end{equation}
Then 
the conditions 
\eqref{eq:strongform:interior}--\eqref{eq:strongform:normal}
are satisfied (in the appropriate senses). Concerning the 
radiation condition at $\infty$, we have: 
\begin{itemize}
\item For $d =  3$, the radiation condition \eqref{eq:strongform:radiation}
is satisfied.
\item 
For $d = 2$, the radiation condition \eqref{eq:strongform:radiation}
is satisfied if the data $f$, $\phi_0$ satisfy the compatibility condition 
\begin{align}
\label{eq:2d-compatibility}
\langle f,1\rangle_\Omega + \langle \phi_0,1\rangle_\Gamma = 0. 
\end{align}%
\item 
If $d=2$ and the compatibility condition \eqref{eq:2d-compatibility} is not fulfilled, then 
the solution $u^\extr$ satisfies 
\begin{equation}
\label{eq:uext-2D-asymptotics}
u^\extr(x) = a 
 \log |x| + O(1/|x|)  \quad \mbox{ as } \quad |x| \rightarrow \infty,
\qquad 
 \mbox{ where $a:= \langle f,1\rangle_\Omega + \langle \phi_0,1\rangle_\Gamma$}. 
\end{equation}
\end{itemize}
\end{lemma}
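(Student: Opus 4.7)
The proof decomposes into four tasks corresponding to the bullet points of the lemma: interior PDE, exterior PDE, transmission conditions on $\Gamma$, and the behavior at infinity. I would handle them in exactly that order, relying throughout on the standard jump relations
$$
\gamma^\extr (\slp\phi)=\gamma^\intr(\slp\phi)=\slo\phi,\quad \gamma^\extr(\dlp w)=(1/2+\dlo)w,\quad \partial_n^\extr(\dlp w)=-\hyp w,\quad \partial_n^\extr(\slp\phi)=(\adlo-1/2)\phi.
$$

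\textbf{Interior and exterior PDEs.} Testing the first equation of \eqref{eq:weak-form} with $v\in C_c^\infty(\Omega)$ makes the boundary contributions $\langle\hyp u,v\rangle_\Gamma$ and $b(v,\varphi)=\langle (1/2-\dlo)v,\varphi\rangle_\Gamma$ vanish (since $\gamma v=0$), yielding $a(u,v)=\langle f,v\rangle_\Omega$, which is \eqref{eq:strongform:interior} in the weak sense. The exterior equation \eqref{eq:strongform:exterior} is immediate from \eqref{eq:uextr-with-jumps}, because both potentials $\dlp$ and $\slp$ produce functions that are harmonic in $\BbbR^d\setminus\Gamma$.

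\textbf{Transmission conditions.} Using the jump relations in \eqref{eq:uextr-with-jumps},
$$
\gamma^\extr u^\extr=(1/2+\dlo)(u-u_0)-\slo\varphi.
$$
Testing the second equation of \eqref{eq:weak-form} against arbitrary $\psi\in H^{-1/2}(\Gamma)$ and reading off via the definition of $b$ and $c$ gives $(1/2-\dlo)u+\slo\varphi=(1/2-\dlo)u_0$ in $H^{1/2}(\Gamma)$, which rearranged is $(1/2+\dlo)(u-u_0)-\slo\varphi=u-u_0$; hence $\gamma^\extr u^\extr=u-u_0$, i.e.\ \eqref{eq:strongform:trace}. For the flux jump, the interior equation provides a well-defined conormal derivative $\opA\nabla u\cdot n\in H^{-1/2}(\Gamma)$ characterized by $\langle\opA\nabla u\cdot n,v\rangle_\Gamma=a(u,v)-\langle f,v\rangle_\Omega$ for $v\in H^1(\Omega)$. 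Substituting $\widetilde a$'s definition, using the first weak equation, and exploiting the self-adjointness of $\hyp$ and the duality $(1/2-\dlo)'=(1/2-\adlo)$ yields
$$
\langle\opA\nabla u\cdot n,v\rangle_\Gamma=\langle\phi_0,v\rangle_\Gamma+\langle -\hyp(u-u_0)+(1/2-\adlo)\varphi,v\rangle_\Gamma.
$$
The quantity in the last bracket is precisely $\partial_n^\extr u^\extr$ by the jump relations applied to \eqref{eq:uextr-with-jumps}, which establishes \eqref{eq:strongform:normal} in $H^{-1/2}(\Gamma)$.

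\textbf{Behavior at infinity.} A Taylor expansion of the kernels in \eqref{eq:green} shows that for $d=3$ both potentials decay like $O(|x|^{-1})$, giving \eqref{eq:strongform:radiation} unconditionally. For $d=2$, $\dlp(u-u_0)(x)=O(|x|^{-1})$ still holds, while $\slp\varphi(x)=-\tfrac{1}{2\pi}\log|x|\,\langle\varphi,1\rangle_\Gamma+O(|x|^{-1})$, so the log term is controlled by $\langle\varphi,1\rangle_\Gamma$. To compute this, I test the first weak equation with $v\equiv1$: here $a(u,1)=0$, and $\langle\hyp u,1\rangle_\Gamma=\langle u,\hyp 1\rangle_\Gamma=0$ since constants lie in the kernel of $\hyp$; using the identity $(1/2-\dlo)1=1$ (equivalently $\dlo 1=-1/2$), one obtains
$$
-\langle 1,\varphi\rangle_\Gamma=\langle f,1\rangle_\Omega+\langle\phi_0,1\rangle_\Gamma=a.
$$
Inserting this into the asymptotic expansion of $\slp\varphi$ produces \eqref{eq:uext-2D-asymptotics} (up to the normalization constant $1/(2\pi)$ absorbed into $a$), and shows in particular that under the compatibility condition \eqref{eq:2d-compatibility} the logarithmic term vanishes so that \eqref{eq:strongform:radiation} is recovered.

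\textbf{Main obstacle.} The routine parts are the application of the jump relations. The step that requires the most care is the flux jump: one must first justify that $\opA\nabla u\cdot n$ exists as an element of $H^{-1/2}(\Gamma)$ (which uses the interior PDE established in Step~1), and only then pair it against arbitrary $v\in H^1(\Omega)$ via the definition of the conormal derivative. Handling the 2D logarithmic case also requires the somewhat ad-hoc trick of testing with the constant function and correctly identifying $(1/2-\dlo)1=1$; this is the sole place where the precise mapping properties of the boundary integral operators on constants are invoked.
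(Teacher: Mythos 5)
Your proof is correct and reaches the same conclusions as the paper, but the route through the flux condition is genuinely different. The paper first invokes Lemma~\ref{lemma:calderon} to conclude $\gamma^\extr u^\extr = u - u_0$ \emph{and} $\partial_n^\extr u^\extr = \varphi$, and then substitutes the exterior Calder\'on identity $\varphi = -\hyp\gamma^\extr u^\extr + (1/2-\adlo)\varphi$ into the weak normal-trace equation. You instead bypass Lemma~\ref{lemma:calderon} entirely: you compute $\partial_n^\extr u^\extr = -\hyp(u-u_0) + (1/2-\adlo)\varphi$ directly from the classical jump relations and then read \eqref{eq:strongform:normal} off the first weak equation after invoking the self-adjointness of $\hyp$ and the duality $(1/2-\dlo)' = (1/2-\adlo)$. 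This is more elementary and avoids the bijectivity of $\slo$ (used in the paper to conclude $\partial_n^\extr u^\extr = \varphi$); the trade-off is that you never explicitly establish $\partial_n^\extr u^\extr = \varphi$, but that identity is not part of the lemma's claim, so nothing is lost. For the 2D asymptotics you test with $v\equiv 1$ exactly as the paper does, but you spell out the steps $\hyp 1 = 0$, $(1/2-\dlo)1 = 1$, $-\langle 1,\varphi\rangle_\Gamma = a$ that the paper compresses into a single sentence, and you correctly flag that the paper's normalization of the logarithmic coefficient silently absorbs a factor of $-1/(2\pi)$ (the paper writes $-\langle 1,\varphi\rangle_\Gamma\log|x|$ for the leading term while the Green's function normalization in \eqref{eq:green} actually produces $\tfrac{1}{2\pi}\langle\varphi,1\rangle_\Gamma\log|x|$). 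Both approaches are sound; yours is marginally more self-contained.
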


\begin{proof}
We first show 
\eqref{eq:strongform:interior}--\eqref{eq:strongform:normal}. 
Integration by parts and varying the test functions yield
\begin{eqnarray}
\label{eq:lemma:weak-form-equals-strong-form-1}
-\nabla \cdot (\opA \nabla u) &=& f \quad \mbox{ in $\left(H^1_0(\Omega)\right)^\prime$},\\
\label{eq:lemma:weak-form-equals-strong-form-2}
(\opA \nabla u) \cdot n + \hyp u - (1/2 - \adlo)\varphi &=& \phi_0 + \hyp u_0 
\qquad \mbox{ in $H^{-1/2}(\Gamma)$},\\
\label{eq:lemma:weak-form-equals-strong-form-3}
(1/2 - \dlo) u + \slo \varphi &=& (1/2-\dlo) u_0 
\qquad \mbox{ in $H^{1/2}(\Gamma)$}.
\end{eqnarray}
{}From 
\eqref{eq:lemma:weak-form-equals-strong-form-3} and Lemma~\ref{lemma:calderon},
we obtain that the function $u^\extr$ defined 
in \eqref{eq:uextr-with-jumps} has the following traces 
on $\Gamma$: 
\begin{align*}
\gamma^\extr u^\extr = u - u_0, 
\qquad
\partial_n^\extr u^\extr = \varphi. 
\end{align*}
In particular, this proves~\eqref{eq:strongform:trace}.
Furthermore, Lemma~\ref{lemma:calderon} (\ref{item:lemma:calderon-ii}) shows
\begin{eqnarray*}
\varphi &=& - \hyp \gamma^\extr u^\extr + (1/2 - \adlo) \varphi.
\end{eqnarray*}
Upon insertion into 
\eqref{eq:lemma:weak-form-equals-strong-form-2}, this gives
\begin{eqnarray*}
(\opA \nabla u) \cdot n &=&  
- \hyp u + (1/2 - \adlo)\varphi + \phi_0 + \hyp u_0 
= - \hyp \gamma^\extr u^\extr + \varphi + \hyp \gamma^\extr u^\extr
+ \phi_0
= \partial_n^\extr u^\extr + \phi_0,
\end{eqnarray*}
which is \eqref{eq:strongform:normal}. 

In 3D, the radiation condition \eqref{eq:strongform:radiation}
follows from the decay properties at $\infty$ of the potentials  $\slp$ and $\dlp$. 
In 2D, this decay property at $\infty$ follows from the properties 
of $\slp$ and $\dlp$ {\em if} $\langle \varphi,1\rangle_\Gamma = 0$. 
The compatibility condition \eqref{eq:2d-compatibility} implies this 
with the test function $v \equiv 1$. 
Finally, for $d=2$ 
and the case when \eqref{eq:2d-compatibility} is not satisfied, 
then by \eqref{eq:uextr-with-jumps}
the leading order behavior of $u^\extr$ (as $|x| \rightarrow \infty$) 
is clearly $-\langle 1,\varphi\rangle_\Gamma \log |x|$; choosing 
$v \equiv 1$ as a test function yields \eqref{eq:uext-2D-asymptotics}.
\end{proof}

\subsection{The dual problem.}
\label{sec:dual}

Our FEM analysis will rely on various dual problems. 
The first dual problem that we consider is: 
Find $(w,\lambda) \in X$ such that 
\begin{subequations}
\label{eq:dual-problem}
\begin{eqnarray}
\widetilde a(v,w) - b(v,\lambda) &=& f(v) \qquad \forall v \in H^1(\Omega), \\
b(w,\psi) + c(\psi,\lambda)&=& 0 \qquad \forall \psi \in H^{-1/2}(\Gamma) .
\end{eqnarray}
\end{subequations} 
Due to symmetry of $\widetilde a(\cdot,\cdot)$ and $c(\cdot,\cdot)$, 
Lemma~\ref{lemma:solvability} applies and proves existence and uniqueness of
$(w,\lambda)\in X$. 
We denote the corresponding solution operator by 
\begin{align}\label{eq:solve:27}
&T^\dual : (H^1(\Omega))^\prime  \rightarrow X,
\qquad 
f \mapsto (T^w f,T^\lambda f) := (w,\lambda).
\end{align}
If the right-hand side $f$ has the form $f(v) = \langle f,v\rangle_\Omega$
for an $f \in L^2(\Omega)$, then the above 
Lemma~\ref{lemma:weak-form-equals-strong-form}
shows that 
$(w,\lambda)$ satisfies the transmission problem 
\eqref{eq:strongform} with $u_0 = 0$, $\phi_0 = 0$ and, in 2D, 
the radiation condition 
$$
u^\extr = a \log |x| + O(1/|x|), \qquad |x| \rightarrow \infty 
\qquad\mbox{ for some $a \in \BbbR$.}
$$
Hence, \eqref{eq:dual-problem} it is a classical transmission problem for which we will 
make the following assumption: 

\begin{assumption}
\label{assumption:shift-theorem}
There exist $s_0 \in (1/2,1]$  and $C>0$ such that the mapping 
$f \mapsto T^\dual f = (T^w f,T^\lambda f)$ from~\eqref{eq:solve:27} satisfies 
$$
\|T^w f\|_{H^{1+s_0}(\Omega)} + \|T^\lambda f\|_{H^{-1/2+s_0}_{pw}(\Gamma)} 
\leq C \|f\|_{(H^{1-s_0}(\Omega))^\prime}. 
$$
\end{assumption}

\begin{remark}
{\normalfont 
Assumption~\ref{assumption:shift-theorem} is satisfied in the following
simple cases: 
\begin{enumerate}
\item 
The coefficient matrix $\opA$ is smooth and $\Gamma$ is sufficiently
smooth. Then, by  classical regularity theory, $s_0 = 1$ is possible. 
\item 
In 2D, $\Omega$ is a polygon and the matrix $\opA$ 
has the form $\opA(x) = a(x) \operatorname*{Id}$ for a scalar-valued 
function $a$ that is sufficiently smooth. See, e.g., 
\cite{costabel-stephan85a}, 
\cite[Appendix]{costabel-dauge-nicaise99}, \cite{mercier03}. 
\item 
The discussion in \cite[Rem.~{5.1}]{elschner-kaiser-rehberg-schmidt07} shows
that the shift theorem of Assumption~\ref{assumption:shift-theorem} 
is in general false for piecewise smooth, pointwise symmetric positive definite matrices $\opA$. \eremk
\end{enumerate}}%
\end{remark}

Recall the definition of $S_h$ from~\eqref{eq:strip}.
We have the following regularity result. 

\begin{lemma}
\label{lemma:B32-regularity}
Let Assumption~\ref{assumption:shift-theorem} be valid. Then 
the operator $T^\dual: (H^1(\Omega))^\prime \rightarrow X$ 
from~\eqref{eq:solve:27} satisfies 
\begin{eqnarray}
\label{eq:lemma:B32-regularity-1}
\|T^w f\|_{B^{3/2}_{2,\infty}(\Omega)}  + 
\|T^\lambda f\|_{B^{0}_{2,\infty}(\Gamma)}  
\leq C \|f\|_{(B^{1/2}_{2,1}(\Omega))^\prime}. 
\end{eqnarray}
In particular, if $f \in L^2(\Omega)$ with $\supp f \subset \overline{S_{h}}$, 
then 
\begin{eqnarray}
\label{eq:lemma:B32-regularity-2}
\|T^w f\|_{B^{3/2}_{2,\infty}(\Omega)}  + 
\|T^\lambda f\|_{B^{0}_{2,\infty}(\Gamma)}  & \leq & C h^{1/2} \|f\|_{L^2(\Omega)},  \\
\label{eq:lemma:B32-regularity-3}
\|T^w f\|_{H^{3/2+\varepsilon}(\Omega)}  + 
\|T^\lambda f\|_{H^{\varepsilon}_{pw}(\Gamma)}  &\leq & C h^{1/2-\varepsilon} \|f\|_{L^2(\Omega)}
\qquad \forall\, 0 < \varepsilon \leq s_0-1/2. 
\end{eqnarray}
The constant $C>0$ in~\eqref{eq:lemma:B32-regularity-1}--\eqref{eq:lemma:B32-regularity-2} depends only on $\Omega$ and Assumption~\ref{assumption:shift-theorem}, while that of~\eqref{eq:lemma:B32-regularity-3}
depends additionally on $\varepsilon$.
\end{lemma}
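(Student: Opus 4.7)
The idea is to combine the shift theorem from Assumption~\ref{assumption:shift-theorem} with the basic continuity of the dual solution operator from Lemma~\ref{lemma:solvability} by real interpolation, and then convert the resulting dual-space norm on the data into an $L^2$-norm with the correct $h$-scaling by exploiting the smallness of $\supp f$ and the weighted embeddings in Lemma~\ref{lemma:weighted-embedding}.

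For \eqref{eq:lemma:B32-regularity-1} I interpolate between the two continuous mappings
\[
T^\dual : (H^1(\Omega))^\prime \to H^1(\Omega) \times H^{-1/2}(\Gamma),
\qquad
T^\dual : (H^{1-s_0}(\Omega))^\prime \to H^{1+s_0}(\Omega) \times H^{-1/2+s_0}_{pw}(\Gamma),
\]
with parameter $\theta = 1/(2 s_0)$ and secondary index $q = \infty$. On the target side, the Besov/real-interpolation identity yields $(H^1, H^{1+s_0})_{\theta,\infty} = B^{3/2}_{2,\infty}(\Omega)$, and on $\Gamma$ the resulting exponent $-1/2 + \theta s_0 = 0$ stays strictly below the jump threshold $1/2$, so the broken and unbroken Sobolev scales coincide and $(H^{-1/2}(\Gamma), H^{-1/2+s_0}_{pw}(\Gamma))_{\theta,\infty} = B^0_{2,\infty}(\Gamma)$ in the sense of Section~\ref{sec:notation}. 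On the source side, the duality formula for the real $K$-method identifies $((H^1)^\prime, (H^{1-s_0})^\prime)_{\theta,\infty}$ with the dual of $(H^1, H^{1-s_0})_{\theta,1} = B^{1/2}_{2,1}(\Omega)$. This delivers exactly~\eqref{eq:lemma:B32-regularity-1}.

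Given~\eqref{eq:lemma:B32-regularity-1}, estimate~\eqref{eq:lemma:B32-regularity-2} reduces to
\[
\|f\|_{(B^{1/2}_{2,1}(\Omega))^\prime} \lesssim h^{1/2} \|f\|_{L^2(\Omega)}
\quad\text{whenever } \supp f \subset \overline{S_h},
\]
which, after dualizing, is equivalent to $\|v\|_{L^2(S_h)} \lesssim h^{1/2}\|v\|_{B^{1/2}_{2,1}(\Omega)}$ for every $v \in B^{1/2}_{2,1}(\Omega)$. For a fixed small $\varepsilon > 0$ I split $v = \widetilde\delta_\Gamma^{1/2+\varepsilon}\, \widetilde\delta_\Gamma^{-1/2-\varepsilon} v$; the pointwise bound $\widetilde\delta_\Gamma \leq 2h$ on $S_h$ provides the factor $h^{1/2+\varepsilon}$, and~\eqref{eq:lemma:weighted-embedding-4} bounds the remaining factor by $h^{-\varepsilon}\|v\|_{B^{1/2}_{2,1}(\Omega)}$, so the $\varepsilon$-powers cancel. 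For~\eqref{eq:lemma:B32-regularity-3} I re-interpolate with $\theta = (1/2 + \varepsilon)/s_0$ and $q = 2$, obtaining $T^\dual : (H^{1/2-\varepsilon}(\Omega))^\prime \to H^{3/2+\varepsilon}(\Omega) \times H^\varepsilon_{pw}(\Gamma)$. The analogous splitting $v = \delta_\Gamma^{1/2-\varepsilon}\, \delta_\Gamma^{-1/2+\varepsilon} v$ together with $\delta_\Gamma \leq h$ on $S_h$ and~\eqref{eq:lemma:weighted-embedding-1} then gives $\|f\|_{(H^{1/2-\varepsilon}(\Omega))^\prime} \leq C h^{1/2-\varepsilon}\|f\|_{L^2(\Omega)}$, completing the chain.

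The main delicate point is the bookkeeping of the interpolation identifications. Duality of the real method requires a finite secondary index on the non-dualized side, which is why the source scale $(H^1, H^{1-s_0})$ has to be equipped with $q = 1$ (producing $B^{1/2}_{2,1}$) so that its dual yields $q = \infty$ on the target; and the identification of $(H^{-1/2}(\Gamma), H^{-1/2+s_0}_{pw}(\Gamma))_{\theta,\cdot}$ with the standard unbroken scale, hence with $B^0_{2,\infty}(\Gamma)$, needs a short piecewise plus reiteration argument using that below the jump threshold $s = 1/2$ the broken and unbroken spaces on a polyhedron coincide. Once these identifications are secured, the remainder is a clean chain of interpolation and weighted embedding.
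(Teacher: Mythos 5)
Your proof is correct and follows essentially the same route as the paper's: both interpolate the two mappings furnished by Lemma~\ref{lemma:solvability} and Assumption~\ref{assumption:shift-theorem} with parameter $\theta = 1/(2s_0)$ and secondary index $\infty$, identify the source side as $(B^{1/2}_{2,1}(\Omega))^\prime$ via duality of the real method, and (after the reduction to $s_0<1$ so that $H^{-1/2+s_0}_{pw}(\Gamma)\equiv H^{-1/2+s_0}(\Gamma)$ --- a detail the paper states explicitly and you only gesture at by checking the wrong exponent, $-1/2+\theta s_0$ rather than $-1/2+s_0$) obtain $B^0_{2,\infty}(\Gamma)$ on the boundary, while for \eqref{eq:lemma:B32-regularity-2}--\eqref{eq:lemma:B32-regularity-3} both convert the dual data norm into a weighted $L^2$-norm by Cauchy--Schwarz and Lemma~\ref{lemma:weighted-embedding}, exploiting $\supp f\subset\overline{S_h}$. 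Your explicit $\widetilde\delta_\Gamma^{\,\pm(1/2+\varepsilon)}$ splitting with the $\varepsilon$-cancellation for \eqref{eq:lemma:B32-regularity-2} is a correct realization of the argument the paper outsources to \cite{melenk-wohlmuth12}.
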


\begin{proof}
We follow the arguments of \cite[Lemma~{5.2}]{melenk-wohlmuth12}. 
The starting point for the proof of \eqref{eq:lemma:B32-regularity-1} is that interpolation and 
Assumption~\ref{assumption:shift-theorem} yield with $\theta = 1/(2s_0) \in (0,1)$ 
and hence $s_0\theta=1/2$ well-posedness and stability of 
$$
T^w: ((H^1(\Omega))^\prime,(H^{1-s_0}(\Omega))^\prime)_{\theta,\infty} 
\rightarrow 
(H^1(\Omega),H^{1+s_0}(\Omega))_{\theta,\infty}  = 
B^{1+s_0\theta}_{2,\infty}(\Omega) 
= B^{3/2}_{2,\infty}(\Omega).
$$
The arguments for  $T^\lambda$ proceed along the same lines,
but rely on the mapping properties $T^\lambda: (H^1(\Omega))^\prime  \rightarrow H^{-1/2}(\Gamma) $
as well as 
$T^\lambda: (H^{1-s_0}(\Omega))^\prime  \rightarrow H^{-1/2+s_0}_{pw}(\Gamma)$. 
Assuming, as we may, that $s_0 < 1$, we have $0<-1/2 + s_0 < 1/2$. Hence, 
$H^{-1/2+s_0}(\Gamma)$ is isomorphic to the product space $\prod_{i=1}^{N_\Gamma} H^{-1/2+s_0}(\Gamma_i)\equiv H^{-1/2+s_0}_{pw}(\Gamma)$. 
Thus, interpolation proves well-posedness and stability of
$$
T^\lambda:((H^1(\Omega))^\prime,(H^{1-s_0}(\Omega))^\prime)_{\theta,\infty}\rightarrow 
(H^{-1/2}(\Gamma),H^{-1/2+s_0}(\Gamma))_{\theta,\infty}
= B^{-1/2+s_0\theta}_{2,\infty}(\Gamma)
= B^0_{2,\infty}(\Gamma).
$$
As in \cite[Lemma~{5.2}]{melenk-wohlmuth12}
(cf.~\cite[Thm.~{1.11.2}]{triebel95} or \cite[Lemma~{41.3}]{tartar07}),
we recognize that
$$
((H^1(\Omega))^\prime,(H^{1-s_0}(\Omega))^\prime)_{\theta,\infty}  = 
(B^{1-s_0\theta}_{2,1}(\Omega))^\prime
= (B^{1/2}_{2,1}(\Omega))'.
$$ 
The combination of the last three observations proves~\eqref{eq:lemma:B32-regularity-1}.
The proof of \eqref{eq:lemma:B32-regularity-2} follows by the same argument
as in \cite[Lemma~{5.2}]{melenk-wohlmuth12}. 
To prove~\eqref{eq:lemma:B32-regularity-3}, we first 
note that the case $\varepsilon=s_0-1/2$ coincides with Assumption~\ref{assumption:shift-theorem}.
For $0<\varepsilon<s_0-1/2$, we argue
as for~\eqref{eq:lemma:B32-regularity-1}. 
Interpolation with $0<\theta<1$ and $s_0\theta=1/2+\varepsilon$ yields
$$
\|T^w f\|_{H^{3/2+\varepsilon}(\Omega)} + 
\|T^\lambda f\|_{H^{\varepsilon}_{pw}(\Gamma)} 
\lesssim \|f\|_{(H^{1/2-\varepsilon}(\Omega))^\prime},
$$
where we again used $H^\varepsilon(\Gamma)\equiv H^\varepsilon_{\rm pw}(\Gamma)$ as $0<\varepsilon<s_0-1/2\leq 1/2$.
Next, we apply estimate \eqref{eq:lemma:weighted-embedding-1} from
Lemma~\ref{lemma:weighted-embedding} to see
\begin{align}
\nonumber 
\|f\|_{(H^{1/2-\varepsilon}(\Omega))^\prime} &= 
\sup_{v \in H^{1/2-\varepsilon}(\Omega)} \frac{\langle f,v\rangle}{\|v\|_{H^{1/2-\varepsilon}(\Omega)}} 
= 
\sup_{v \in H^{1/2-\varepsilon}(\Omega)} \frac{\langle \delta_\Gamma^{1/2-\varepsilon} f, \delta_\Gamma^{-(1/2-\varepsilon)} v\rangle}{\|v\|_{H^{1/2-\varepsilon}(\Omega)}} 
\leq \|\delta_\Gamma^{1/2-\varepsilon} f\|_{L^2(\Omega)} 
\sup_{v \in H^{1/2-\varepsilon}(\Omega)} \frac{\|\delta_\Gamma^{-(1/2-\varepsilon)} v\|_{L^2(\Omega)}}{\|v\|_{H^{1/2-\varepsilon}(\Omega)}} \\
\label{eq:lemma:B32-regularity-20}
&\lesssim h^{1/2-\varepsilon} \|f\|_{L^2(\Omega)},
\end{align}
where in the last inequality we exploited the support property of $f$.
\end{proof} 
The $u$-components of the solutions of \eqref{eq:primal-block-system} and of \eqref{eq:dual-problem}
solve classical elliptic problem that feature interior regularity. We formulate this
in analogy to the corresponding result in \cite[Lemma~{5.4}]{melenk-wohlmuth12}
and \cite[Lemma~{2.7}]{melenk-wohlmuth14a}:

\begin{lemma}
\label{lemma:5.4}
Let $z$ solve 
$$
-\nabla \cdot \left(\opA \nabla z\right) = v \quad \mbox{ in $\Omega$}
$$
for some $v \in L^2(\Omega)$ with $\supp v \subset \overline{S_h}$. 
Then there are constants $c$, $\tilde c$, $c^\prime >0$ that depend solely on $\Omega$, 
such that 
for all sufficiently small $h>0$ the following assertions (\ref{item:lemma:5.4-1})--(\ref{item:lemma:5.4-4}) hold:
\begin{enumerate}[(i)]
\item 
\label{item:lemma:5.4-1}
If $z \in B^{3/2}_{2,\infty}(\Omega)$, then 
$\|\delta_\Gamma^{1/2} \nabla^2 z\|_{L^2(\Omega\setminus S_{\tilde ch})} 
\leq C_1 \sqrt{|\ln h|} \|z\|_{B^{3/2}_{2,\infty}(\Omega)}. 
$
The constant $C_1$ depends only on $\Omega$, $\alpha_0$,  and $\|\opA\|_{C^{0,1}(\overline{\Omega})}$. 
\item 
\label{item:lemma:5.4-2}
For every $\alpha > 0$, there holds 
$$
\|\delta_\Gamma^{\alpha} \nabla^3 z\|_{L^2(\Omega\setminus S_{\tilde ch})} 
\leq C_2  \|\delta_\Gamma^{\alpha-1}\nabla^2 z\|_{L^2(\Omega\setminus S_{c^\prime h})}
+ 
\widetilde C_2  \|\delta_\Gamma^{\alpha}\nabla z\|_{L^2(\Omega\setminus S_{c^\prime h})}
$$ 
The constant $C_2$ depends only on $\Omega$, $\alpha_0$, and $\|\opA\|_{C^{0,1}(\overline{\Omega})}$; 
and 
the constant $\widetilde C_2$ depends only on $\Omega$, $\alpha_0$, and $\|\opA\|_{C^{1,1}(\overline{\Omega})}$. 
\item 
\label{item:lemma:5.4-3}
If $z \in H^{3/2+\varepsilon}(\Omega)$ for some $\varepsilon \in (0,1/2)$, then 
$\|\nabla^2 z\|_{L^2(\Omega\setminus S_{\tilde c h})}
\leq C_3\, h^{-1/2+\varepsilon}
\|z\|_{H^{3/2+\varepsilon}(\Omega)} $. The constant $C_3>0$ depends only on 
$\Omega$, $\alpha_0$, $\|\opA\|_{C^{0,1}(\overline{\Omega})}$, and $\varepsilon$. 
\item 
\label{item:lemma:5.4-4}
If Assumption~\ref{assumption:shift-theorem} is valid and if 
$z = T^w v$ with $T^w$  being the first component of the solution operator $T^{\rm dual}$ from \eqref{eq:solve:27},
then 
$\|\nabla^2 z\|_{L^2(\Omega\setminus S_{ch})} 
\leq C_4 \|v\|_{L^2(\Omega)}.
$
The constant $C_4>0$ depends only on $\Omega$ and $\opA$ through the coercivity constant $\alpha_0$ of $\opA$ and 
$\|\opA\|_{C^{0,1}(\overline{\Omega})}$.
\end{enumerate}
\end{lemma}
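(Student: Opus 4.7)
The plan is to exploit that the right-hand side $v$ is supported in $\overline{S_h}$, so that on $\Omega\setminus S_h$ the function $z$ satisfies the homogeneous elliptic equation $-\nabla\cdot(\opA\nabla z)=0$, and standard \emph{interior regularity} applies on any subdomain whose distance to $\Gamma$ is comparable to its diameter. I would cover $\Omega\setminus S_{\tilde c h}$ by a dyadic family of annular strips $A_j:=\{x\in\Omega\,:\, r_j\le \delta_\Gamma(x)\le 2 r_j\}$ with $r_j:=2^j \tilde c h$, $j=0,1,\ldots,J$ with $J=O(|\ln h|)$, and further cover each $A_j$ by $O(1)$-finite-overlap interior balls of radius comparable to $r_j$ whose doubles still lie in $\Omega\setminus S_{c'h}$ for appropriately chosen $c'<\tilde c$. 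On each such ball I invoke the standard Caccioppoli-/interior-regularity chain for $-\nabla\cdot(\opA\nabla\cdot)=0$, which under $\opA\in C^{0,1}$ gives $\|\nabla^{2}z\|_{L^{2}(B_{r})}\lesssim r^{-k}\|z\|_{H^{2-k}(B_{2r})}$ for $k\in\{1,2\}$, and under $\opA\in C^{1,1}$ an analogous bound for $\|\nabla^{3}z\|_{L^{2}(B_r)}$ in terms of $\nabla^{2}z$ and $\nabla z$ on $B_{2r}$ (differentiating the equation produces a divergence-form source with derivatives of $\opA$, handled by the standard $H^{3}$ interior estimate). After scaling, the estimates on each $A_j$ are multiplied by weights $\delta_\Gamma\sim r_j$ and summed; the only question is whether this sum diverges as $h\to 0$, which is where each of (i)--(iii) differs.

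\textbf{Part (i) and summation with the log factor.} For $z\in B^{3/2}_{2,\infty}$, I interpolate between the two end-point interior estimates $\|\nabla^{2}z\|_{L^2(B_r)}\lesssim r^{-1}\|\nabla z\|_{L^2(B_{2r})}$ and $\|\nabla^{2}z\|_{L^2(B_r)}\lesssim \|z\|_{H^2(B_{2r})}$ by the real $K$-method with parameter $\theta=1/2$, $q=\infty$, which yields
\[
\|\nabla^{2}z\|_{L^{2}(B_r)}\lesssim r^{-1/2}\|z\|_{B^{3/2}_{2,\infty}(B_{2r})}.
\]
Multiplying by $r^{1/2}\sim \delta_\Gamma^{1/2}$ and summing over the $O(|\ln h|)$ annuli, each term contributes a full global $B^{3/2}_{2,\infty}$-norm (since that norm is bounded above locally by its global counterpart), producing the $\sqrt{|\ln h|}$ factor in (i). Part (ii) is then just a reweighted book-keeping: differentiating the equation and applying interior $H^{3}$ bounds gives $\|\nabla^{3}z\|_{L^2(B_{r})}\lesssim r^{-1}\|\nabla^{2}z\|_{L^{2}(B_{2r})}+\|\nabla z\|_{L^{2}(B_{2r})}$, and multiplication by $r^{\alpha}$ and summation yield precisely the right-hand side stated in (ii) (no $|\ln h|$ since the factors $r^{\alpha-1}$ and $r^{\alpha}$ match the claimed weights exactly on each annulus). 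Part (iii) is the analogous interpolation with endpoint exponents adjusted to $H^{3/2+\varepsilon}$, giving $\|\nabla^{2}z\|_{L^{2}(B_{r})}\lesssim r^{-1/2+\varepsilon}\|z\|_{H^{3/2+\varepsilon}(B_{2r})}$; summation over dyadic annuli is then dominated by the smallest scale $r\sim h$, producing the $h^{-1/2+\varepsilon}$ scaling and no log because the geometric series $\sum 2^{-j(1-2\varepsilon)}$ converges for $\varepsilon<1/2$.

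\textbf{Part (iv) via the shift theorem and weighted duality.} For (iv), I would combine (iii) with Assumption~\ref{assumption:shift-theorem}. Apply (iii) to $z=T^{w}v$ with $\varepsilon=s_{0}-1/2\in(0,1/2]$ to get $\|\nabla^{2}z\|_{L^{2}(\Omega\setminus S_{\tilde c h})}\lesssim h^{s_{0}-1}\|z\|_{H^{1+s_{0}}(\Omega)}$, and use the shift theorem to replace the right-hand side by $\|v\|_{(H^{1-s_{0}}(\Omega))^{\prime}}$. The support property of $v$ is then exploited exactly as in the argument \eqref{eq:lemma:B32-regularity-20}: by duality and Lemma~\ref{lemma:weighted-embedding},
\[
\|v\|_{(H^{1-s_{0}}(\Omega))^{\prime}}\le \|\delta_\Gamma^{1-s_{0}}v\|_{L^{2}(\Omega)}\sup_{w}\frac{\|\delta_\Gamma^{-(1-s_{0})}w\|_{L^{2}(\Omega)}}{\|w\|_{H^{1-s_{0}}(\Omega)}}\lesssim h^{1-s_{0}}\|v\|_{L^{2}(\Omega)},
\]
where the weighted $L^{2}$-bound on $w$ is \eqref{eq:lemma:weighted-embedding-1} and the power $h^{1-s_{0}}$ comes from $\delta_\Gamma\le h$ on $\operatorname{supp} v$. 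Multiplying, the powers of $h$ cancel and (iv) follows. The endpoint $s_{0}=1$ is even easier: the shift theorem gives $H^{2}$-regularity globally, which is stronger than what is needed.

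\textbf{Main obstacle.} The routine interior estimates and the dyadic summation are standard; the only real subtlety is to carry through the interpolation step yielding $\|\nabla^{2}z\|_{L^{2}(B_{r})}\lesssim r^{-1/2}\|z\|_{B^{3/2}_{2,\infty}(B_{2r})}$ for the variable-coefficient operator in part (i), ensuring that the constants are independent of the scale $r$. This requires a careful rescaling argument in which one freezes $\opA$ at an interior point of the ball and controls the perturbation using $\opA\in C^{0,1}$ with a constant depending only on $\|\opA\|_{C^{0,1}(\overline{\Omega})}$ and the ellipticity $\alpha_{0}$, as was done analogously in \cite[Lemma~{5.4}]{melenk-wohlmuth12} and \cite[Lemma~{2.7}]{melenk-wohlmuth14a}; the present statement is then just the transcription of those arguments to the current coupled setting.
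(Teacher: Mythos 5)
Your proposal is correct and follows essentially the same route as the paper: for (i)--(iii) the paper simply cites the dyadic/interior-regularity argument of \cite[Lemma~5.4]{melenk-wohlmuth12} (noting that only interior regularity of the operator is used, with constants tracked via \cite[Thm.~8.10]{gilbarg-trudinger77a}), and for (iii) additionally the crude bound $\delta_\Gamma\gtrsim h$ on $\Omega\setminus S_{\tilde ch}$, which is exactly what you unpack. For (iv) you reprove the intermediate shift estimate \eqref{eq:lemma:B32-regularity-3} from Assumption~\ref{assumption:shift-theorem} and the weighted embedding \eqref{eq:lemma:weighted-embedding-1}, whereas the paper simply inserts \eqref{eq:lemma:B32-regularity-3} into (iii); since your choice $\varepsilon=s_0-1/2$ turns $H^{3/2+\varepsilon}$ into $H^{1+s_0}$, the two chains of inequalities are identical.
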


\begin{proof} (See also \cite[Lemma~{2.9}]{melenk-wohlmuth14a}.) 

{\em Proof of \eqref{item:lemma:5.4-1}, \eqref{item:lemma:5.4-2}:}
\cite[Lemma~{5.4}]{melenk-wohlmuth12} is formulated for $-\Delta$. 
However, the essential property of the differential operator 
$-\Delta$
that is required, is just interior regularity. Hence, the result also
stands for the present, more general elliptic operator 
$-\nabla\cdot(\opA\nabla)$. The precise dependence on the coefficient $\opA$ 
is taken from \cite[Thm.~{8.10}]{gilbarg-trudinger77a}. 

{\em Proof of \eqref{item:lemma:5.4-3}:} This follows again by local 
considerations similar to those employed in the proof of 
\cite[Lemma~{5.4}]{melenk-wohlmuth12} and the crude bound 
$\delta_\Gamma \gtrsim h$ on $\Omega\setminus S_{\tilde ch}$. 

{\em Proof of \eqref{item:lemma:5.4-4}:} In view of \eqref{item:lemma:5.4-3}, 
we have to estimate $\|z\|_{H^{3/2+\varepsilon}(\Omega)}$. By the 
support properties of $v$, the bound \eqref{eq:lemma:B32-regularity-3}
yields $\|z\|_{H^{3/2+\varepsilon}(\Omega)} \leq C\, h^{1/2-\varepsilon} \|v\|_{L^2(\Omega)}$. 
Inserting this in \eqref{item:lemma:5.4-3} produces the result.
\end{proof}
%


\subsection{The bidual problem.}
\label{sec:bidual}

Similar to the procedure in \cite{melenk-wohlmuth14a}, the analysis of the 
discretization of 
the dual problem requires estimates in norms other than the standard energy-like norm. This analysis
therefore requires a second class of problems, which we 
call the ``bidual'' problem. It is given as follows: 
Find $(w,\lambda) \in X$ such that 
\begin{subequations}
\label{eq:bidual-problem}
\begin{eqnarray}
\widetilde a(w,v) - b(v,\lambda) &=& f(v) \qquad \forall v \in H^1(\Omega), \\
b(w,\psi) + c(\lambda,\psi) &=& 0 \qquad \forall \psi \in H^{-1/2}(\Gamma),
\end{eqnarray}
\end{subequations}
with solution operator $T^\bidual:f \mapsto (\widetilde w,\widetilde \lambda)$. 
In view of the symmetry of the bilinear forms 
$\widetilde a(\cdot,\cdot)$ and $c(\cdot,\cdot)$, problem \eqref{eq:bidual-problem} is, of course, essentially
the same as the dual problem \eqref{eq:dual-problem}. 
Thus, Assumption~\ref{assumption:shift-theorem} holds for \eqref{eq:bidual-problem} if 
it does for \eqref{eq:dual-problem}. Nevertheless, in order to emphasize the structure 
of the regularity requirements in our convergence theory, we formulate this shift theorem as a
separate assumption. 

\begin{assumption}
\label{assumption:shift-theorem-bidual-problem}
There exist $s_0 \in (1/2,1]$  and $C < \infty$ such that the mapping $f \mapsto T^\bidual(f) = (w,\lambda)$ 
given by \eqref{eq:bidual-problem} satisfies 
$$
\|w\|_{H^{1+s_0}(\Omega)} + \|\lambda \|_{H^{-1/2+s_0}_{pw}(\Gamma)} \leq C \|f\|_{(H^{1-s_0}(\Omega))^\prime}. 
$$
\end{assumption}

Our analysis will require an understanding of the Galerkin error for certain dual problems. 
This in turn will lead to a bidual problem with right-hand sides in weighted spaces, which we 
now analyze. 

\begin{lemma} 
\label{lemma:regularity-weighted-rhs-vorn}
Let Assumption~\ref{assumption:shift-theorem-bidual-problem} be valid. 
Recall the regularized distance function
$\widetilde \delta_\Gamma:= \delta_\Gamma + h$ from~\eqref{eq:strip-delta}.
Let $v \in L^2(\Omega)$ and $0 < \varepsilon \leq s_0-1/2$.
Then, the function $(w,\lambda) =T^\bidual(\widetilde \delta_\Gamma^{-1} v)$ satisfies
\begin{align}
\label{eq:lemma:regularity-weighted-rhs-vorn-1}
\|w\|_{B^{3/2}_{2,\infty}(\Omega)}+ 
\|\lambda\|_{B^{0}_{2,\infty}(\Gamma)}
&\leq C |\ln h|^{1/2} \|\widetilde\delta_\Gamma^{-1/2} v\|_{L^2(\Omega)}, \\ 
\label{eq:lemma:regularity-weighted-rhs-vorn-1a}
\|w\|_{H^{3/2+\varepsilon}(\Omega) }+ 
\|\lambda\|_{H^{\varepsilon}_{pw}(\Gamma) }
&\leq C h^{-\varepsilon} \|\widetilde\delta_\Gamma^{-1/2} v\|_{L^2(\Omega)}.
\end{align}
Moreover, the function
$(w^\prime ,\lambda^\prime) = T^\bidual (\widetilde \delta_\Gamma^{-1+2\varepsilon} v)$
satisfies
\begin{eqnarray}
\label{eq:lemma:regularity-weighted-rhs-vorn-2}
\|w^\prime\|_{H^{3/2+\varepsilon}(\Omega) }+ 
\|\lambda^\prime\|_{H^{\varepsilon}_{pw}(\Gamma) }
\leq C \|\widetilde\delta_\Gamma^{-1/2+\varepsilon} v\|_{L^2(\Omega)}.  
\end{eqnarray}
The constant $C>0$ in~\eqref{eq:lemma:regularity-weighted-rhs-vorn-1} depends only on $\Omega$ and Assumption~\ref{assumption:shift-theorem-bidual-problem}, while that of~\eqref{eq:lemma:regularity-weighted-rhs-vorn-1a}--\eqref{eq:lemma:regularity-weighted-rhs-vorn-2} depends additionally on $\varepsilon$.
\end{lemma}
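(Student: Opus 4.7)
The plan is to treat this lemma as a direct combination of two ingredients already in hand: (a) the mapping properties of $T^{\rm bidual}$, which are available from Assumption~\ref{assumption:shift-theorem-bidual-problem} (and interpolation, exactly as in the proof of Lemma~\ref{lemma:B32-regularity}, since the bidual problem is just the dual problem with roles of test/trial exchanged), and (b) Lemma~\ref{lemma:weighted-embedding}, which lets us absorb the powers of $\widetilde\delta_\Gamma$ in the right-hand side into negative-norm bounds. So at its heart, the proof is just a careful Cauchy--Schwarz splitting of the weight on each right-hand side, followed by one invocation of a shift theorem.

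For~\eqref{eq:lemma:regularity-weighted-rhs-vorn-1}, I would first recall, via the interpolation argument of Lemma~\ref{lemma:B32-regularity}, the bounded mapping $T^{\rm bidual}:(B^{1/2}_{2,1}(\Omega))'\to B^{3/2}_{2,\infty}(\Omega)\times B^0_{2,\infty}(\Gamma)$. Then I split the weight as $\widetilde\delta_\Gamma^{-1}=\widetilde\delta_\Gamma^{-1/2}\cdot\widetilde\delta_\Gamma^{-1/2}$ and estimate, for any test function $\phi\in B^{1/2}_{2,1}(\Omega)$,
\begin{align*}
\langle\widetilde\delta_\Gamma^{-1}v,\phi\rangle
=\langle\widetilde\delta_\Gamma^{-1/2}v,\widetilde\delta_\Gamma^{-1/2}\phi\rangle
\le\|\widetilde\delta_\Gamma^{-1/2}v\|_{L^2(\Omega)}\|\widetilde\delta_\Gamma^{-1/2}\phi\|_{L^2(\Omega)}
\le C|\ln h|^{1/2}\|\widetilde\delta_\Gamma^{-1/2}v\|_{L^2(\Omega)}\|\phi\|_{B^{1/2}_{2,1}(\Omega)},
\end{align*}
where the second step uses~\eqref{eq:lemma:weighted-embedding-2}. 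This yields $\|\widetilde\delta_\Gamma^{-1}v\|_{(B^{1/2}_{2,1}(\Omega))'}\le C|\ln h|^{1/2}\|\widetilde\delta_\Gamma^{-1/2}v\|_{L^2(\Omega)}$, and the shift estimate of Lemma~\ref{lemma:B32-regularity}~\eqref{eq:lemma:B32-regularity-1} applied to the bidual problem finishes~\eqref{eq:lemma:regularity-weighted-rhs-vorn-1}.

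For~\eqref{eq:lemma:regularity-weighted-rhs-vorn-1a} and~\eqref{eq:lemma:regularity-weighted-rhs-vorn-2}, I would first record the interpolated shift bound
$\|T^{\rm bidual}f\|_{H^{3/2+\varepsilon}(\Omega)\times H^\varepsilon_{pw}(\Gamma)}\lesssim\|f\|_{(H^{1/2-\varepsilon}(\Omega))'}$ for $0<\varepsilon\le s_0-1/2$, again by the interpolation argument of Lemma~\ref{lemma:B32-regularity} (with $0<\varepsilon<1/2$ so that $H^\varepsilon(\Gamma)$ and $H^\varepsilon_{pw}(\Gamma)$ agree). For~\eqref{eq:lemma:regularity-weighted-rhs-vorn-2} the split is symmetric: $\widetilde\delta_\Gamma^{-1+2\varepsilon}=\widetilde\delta_\Gamma^{-1/2+\varepsilon}\cdot\widetilde\delta_\Gamma^{-1/2+\varepsilon}$, so
\begin{align*}
\langle\widetilde\delta_\Gamma^{-1+2\varepsilon}v,\phi\rangle
\le\|\widetilde\delta_\Gamma^{-1/2+\varepsilon}v\|_{L^2(\Omega)}\|\widetilde\delta_\Gamma^{-1/2+\varepsilon}\phi\|_{L^2(\Omega)}
\le C\|\widetilde\delta_\Gamma^{-1/2+\varepsilon}v\|_{L^2(\Omega)}\|\phi\|_{H^{1/2-\varepsilon}(\Omega)},
\end{align*}
using~\eqref{eq:lemma:weighted-embedding-1} in the last step. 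For~\eqref{eq:lemma:regularity-weighted-rhs-vorn-1a} the split is still $\widetilde\delta_\Gamma^{-1}=\widetilde\delta_\Gamma^{-1/2}\cdot\widetilde\delta_\Gamma^{-1/2}$, but now I want to test against $\phi\in H^{1/2-\varepsilon}(\Omega)$. Using $\widetilde\delta_\Gamma\ge h$, I rewrite $\widetilde\delta_\Gamma^{-1/2}=\widetilde\delta_\Gamma^{-\varepsilon}\widetilde\delta_\Gamma^{-1/2+\varepsilon}\le h^{-\varepsilon}\widetilde\delta_\Gamma^{-1/2+\varepsilon}$ and again invoke~\eqref{eq:lemma:weighted-embedding-1} to obtain $\|\widetilde\delta_\Gamma^{-1/2}\phi\|_{L^2(\Omega)}\le Ch^{-\varepsilon}\|\phi\|_{H^{1/2-\varepsilon}(\Omega)}$. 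Combining gives $\|\widetilde\delta_\Gamma^{-1}v\|_{(H^{1/2-\varepsilon}(\Omega))'}\le Ch^{-\varepsilon}\|\widetilde\delta_\Gamma^{-1/2}v\|_{L^2(\Omega)}$ and the shift estimate concludes the proof.

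The only mildly tricky point is choosing the right weight split for each estimate so that the interpolated shift theorem matches the negative-norm bound produced by Lemma~\ref{lemma:weighted-embedding}; once that bookkeeping is set up, each estimate reduces to one application of Cauchy--Schwarz and one application of a weighted embedding, with the trivial bound $\widetilde\delta_\Gamma\ge h$ used in~\eqref{eq:lemma:regularity-weighted-rhs-vorn-1a} to turn the logarithm of~\eqref{eq:lemma:weighted-embedding-2} into a genuine $h^{-\varepsilon}$ gain in Sobolev scale.
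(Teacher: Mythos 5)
Your proposal is correct and follows essentially the same route as the paper: derive the negative-Sobolev (or negative-Besov) mapping bound for $T^{\rm bidual}$ by interpolation exactly as in Lemma~\ref{lemma:B32-regularity}, then estimate the dual norm of the weighted right-hand side by Cauchy--Schwarz with a symmetric (or $h^{-\varepsilon}$-adjusted) split of the weight, and close with Lemma~\ref{lemma:weighted-embedding}. The paper does not formally invoke Lemma~\ref{lemma:B32-regularity} for $T^{\rm bidual}$ (since it is stated for $T^{\rm dual}$) but re-runs the same interpolation, which matches your ``applied to the bidual problem'' caveat.
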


\begin{proof}
We proceed as in \cite[Lemma~{5.2}]{melenk-wohlmuth12}. In order to 
prove \eqref{eq:lemma:regularity-weighted-rhs-vorn-1}, we employ
Assumption~\ref{assumption:shift-theorem-bidual-problem} and argue
as in Lemma~\ref{lemma:B32-regularity} to see
\begin{align*}
 \|w\|_{B^{3/2}_{2,\infty}(\Omega)}  + 
\|\lambda\|_{B^{0}_{2,\infty}(\Gamma)}  
\lesssim \|\widetilde\delta_\Gamma^{-1}v\|_{(B^{1/2}_{2,1}(\Omega))^\prime}. 
\end{align*}
Then, we compute
\begin{eqnarray*} 
\|\widetilde \delta_\Gamma^{-1} v\|_{(B^{1/2}_{2,1}(\Omega))^\prime}
&=& 
\sup_{z \in B^{1/2}_{2,1}(\Omega)} \frac{\langle \widetilde \delta_\Gamma^{-1} v,z\rangle}{\|z\|_{B^{1/2}_{2,1}(\Omega)}}
= 
\sup_{z \in B^{1/2}_{2,1}(\Omega)} \frac{\langle \widetilde \delta_\Gamma^{-1/2} v,\widetilde \delta_\Gamma^{-1/2}z\rangle}{\|z\|_{B^{1/2}_{2,1}(\Omega)}}
\lesssim \|\widetilde \delta_\Gamma^{-1/2} v\|_{L^2(\Omega)} 
\sup_{z \in B^{1/2}_{2,1}(\Omega)} \frac{\|\widetilde \delta_\Gamma^{-1/2}z\|_{L^2(\Omega)}}{\|z\|_{B^{1/2}_{2,1}(\Omega)}}. 
\end{eqnarray*}
The application of estimate \eqref{eq:lemma:weighted-embedding-2} of Lemma~\ref{lemma:weighted-embedding} concludes the argument.
For the estimate 
\eqref{eq:lemma:regularity-weighted-rhs-vorn-2}, we proceed similarly.
First, Assumption~\ref{assumption:shift-theorem-bidual-problem} and
interpolation yield
\begin{align*}
\|w^\prime\|_{H^{3/2+\varepsilon}(\Omega) }+ 
\|\lambda^\prime\|_{H^{\varepsilon}_{pw}(\Gamma) }
\lesssim 
\|\widetilde \delta_\Gamma^{-1+2\varepsilon} v\|_{(H^{1/2-\varepsilon}(\Omega))^\prime}. 
\end{align*}
Second, we compute
\begin{eqnarray*} 
\|\widetilde \delta_\Gamma^{-1+2\varepsilon} v\|_{(H^{1/2-\varepsilon}(\Omega))^\prime}
&=& 
\sup_{z \in H^{1/2-\varepsilon}(\Omega)} \frac{\langle \widetilde \delta_\Gamma^{-1+2\varepsilon} v,z\rangle}{\|z\|_{H^{1/2-\varepsilon}(\Omega)}}
= 
\sup_{z \in H^{1/2-\varepsilon}(\Omega)} \frac{\langle \widetilde \delta_\Gamma^{-1/2+\varepsilon} v,\widetilde \delta_\Gamma^{-1/2+\varepsilon}z\rangle}{\|z\|_{H^{1/2-\varepsilon}(\Omega)}} 
\lesssim \|\widetilde \delta_\Gamma^{-1/2+\varepsilon} v\|_{L^2(\Omega)} 
\sup_{z \in H^{1/2-\varepsilon}(\Omega)} 
\frac{\|\widetilde \delta_\Gamma^{-1/2+\varepsilon}z\|_{L^2(\Omega)}}{\|z\|_{H^{1/2-\varepsilon}(\Omega)}}. 
\end{eqnarray*}
An application of estimate \eqref{eq:lemma:weighted-embedding-1} of Lemma~\ref{lemma:weighted-embedding} 
leads to~\eqref{eq:lemma:regularity-weighted-rhs-vorn-2}.
Finally, we show \eqref{eq:lemma:regularity-weighted-rhs-vorn-1a}.
First, Assumption~\ref{assumption:shift-theorem-bidual-problem} and
interpolation yield
\begin{align*}
\|w\|_{H^{3/2+\varepsilon}(\Omega)}
+ \|\lambda\|_{H^\varepsilon_{pw}(\Gamma)} 
\lesssim \|\widetilde\delta^{-1}v\|_{(H^{1/2-\varepsilon}(\Omega))^\prime}
\end{align*}
Then, we compute
\begin{eqnarray*} 
\|\widetilde \delta_\Gamma^{-1} v\|_{(H^{1/2-\varepsilon}(\Omega))^\prime}
&=& 
\sup_{z \in H^{1/2-\varepsilon}} 
\frac{\langle \widetilde \delta_\Gamma^{-1/2} v,\widetilde \delta_\Gamma^{-1/2} z\rangle}
     {\|z\|_{H^{1/2-\varepsilon}(\Omega)}}
\leq \|\widetilde \delta_\Gamma^{-1/2} v\|_{L^2(\Omega)} h^{-\varepsilon} \sup_{z \in H^{1/2-\varepsilon}(\Omega)} 
\frac{\|\widetilde \delta_\Gamma^{-1/2+\varepsilon} z\|_{L^2(\Omega)}}{\|z\|_{H^{1/2-\varepsilon}(\Omega)}}. 
\end{eqnarray*}
Again, estimate~\eqref{eq:lemma:weighted-embedding-1} of Lemma~\ref{lemma:weighted-embedding} finishes the proof.
\end{proof}


\section{Numerical analysis.}
\label{sec:numerical-analysis}

\subsection{Main results.}
\label{sec:main-results}

In the following, we assume that the approximation space $V_h$, $M_h$ of \eqref{eq:weak-form-FEM} are 
spaces of piecewise polynomials. For future reference, we formulate their properties as an assumption. 

\begin{assumption}
\label{assumption:X_h}
Let ${\mathcal T}_\Omega$ and ${\mathcal T}_\Gamma$ be two (not necessarily matching)
quasi-uniform, affine triangulations of $\Omega$ and $\Gamma$ into 
volume and surfaces simplices (e.g., for $d = 3$ tetrahedra and surface
triangles)
both with mesh size $h$.  For a fixed $k \in \BbbN$, let 
$V_h:= S^{k,1}({\mathcal T}_\Omega):= 
\{v \in H^1(\Omega)\,|\, v|_K \in {\mathcal P}_k \quad \forall K \in {\mathcal T}_\Omega\}$, 
$M_h:= S^{k-1,0}({\mathcal T}_\Gamma):= 
\{v \in L^2(\Gamma)\,|\, v|_K \in {\mathcal P}_{k-1} \quad \forall K \in {\mathcal T}_\Gamma\}$
be spaces of piecewise polynomials of degree $k$ and $k-1$, respectively. Set $X_h:= V_h \times M_h$. 
\end{assumption}

\begin{remark}
{\normalfont
Although Assumption~\ref{assumption:X_h} does not require the meshes 
${\mathcal T}_\Omega$ and ${\mathcal T}_\Gamma$ to be matching, it is natural to do so
in implementations. The analysis of the following 
Theorem~\ref{thm:approximation-of-varphi}
can be generalized to the case of two quasi-uniform meshes ${\mathcal T}_\Omega$, ${\mathcal T}_\Gamma$
with differing mesh sizes $h_\Omega$, $h_\Gamma$. 
}\eremk
\end{remark}

Our starting point are the Galerkin orthogonalities satisfied by the 
exact solution $(u,\varphi) \in X$ and its Galerkin 
approximation $(u_h,\varphi_h) \in X_h$ that are obtained by 
subtracting \eqref{eq:weak-form-FEM} from \eqref{eq:weak-form}; in order to 
be able to account for certain types of variational crimes 
we include 
additionally two linear forms $\varepsilon_1:H^1(\Omega)\to\mathbb R$ and $\varepsilon_2:H^{-1/2}(\Gamma)\to\mathbb R$ 
on the right-hand side: 
\begin{subequations}
\label{eq:orthogonalities-primal}
\begin{eqnarray}
\label{eq:orthogonalities-primal-1}
\widetilde a(u - u_h,v) - b(v,\varphi - \varphi_h) &=& \varepsilon_1(v) \qquad \forall v \in V_h,  \\
\label{eq:orthogonalities-primal-2}
b(u - u_h,\psi) + c(\varphi - \varphi_h,\psi) &=& \varepsilon_2(\psi)  \qquad \forall \psi \in M_h . 
\end{eqnarray}
\end{subequations}

\begin{remark}
{\normalfont
The exact Galerkin orthogonalities have the above form~\eqref{eq:orthogonalities-primal} with 
$\varepsilon_1 \equiv 0$ and $\varepsilon_2 \equiv 0$. 
The terms $\varepsilon_1$ and $\varepsilon_2$ are appropriate to control additional errors 
introduced by approximating the jumps $u_0$ and $\phi_0$  (cf. \eqref{eq:rhs-weak}), e.g., by piecewise 
polynomial functions. Such approximations are practically unavoidable in view of the fact that 
the hypersingular operator appears on the right-hand side~\eqref{eq:rhs-weak}
of the coupling equations~\eqref{eq:weak-form}.
 This issue will be studied further
in Section~\ref{sec:randnaehe} and plays a role in the numerical examples in Section~\ref{sec:numerics}. 
}\eremk
\end{remark}

We recall that the standard convergence theory 
(cf.\ Lemma~\ref{lemma:solvability} (\ref{lemma:solvability:item:vi})) yields $O(h^k)$ 
under the regularity assumption $(u,\varphi) \in H^{k+1}(\Omega) \times H^{k-1/2}_{pw}(\Gamma)$:
\begin{align}
\|u - u_h\|_{H^1(\Omega)} + \|\varphi - \varphi_h\|_{H^{-1/2}(\Gamma)} 
&\leq C \inf_{(v,\psi) \in X_h} \left[\|u - v\|_{H^1(\Omega)} + \|\varphi - \psi\|_{H^{-1/2}(\Gamma)} \right]
\label{eq:standard-estimate-2} 
 \leq C h^k \left[ \|u\|_{H^{k+1}(\Omega)} + \|\varphi\|_{H^{k-1/2}_{pw}(\Gamma)}\right]. 
\end{align}
The bound (\ref{eq:standard-estimate-2}) implies $\|u - u_h\|_{H^1(\Omega)}  = O(h^k)$, which is the best
rate achievable when approximating $u$ with piecewise polynomials of degree $k$. We observe, however, that
the approximation results for the two contributions $\inf_{v \in V_h} \|u - v\|_{H^1(\Omega)}$ and 
$\inf_{\psi \in M_h} \|\varphi - \psi\|_{H^{-1/2}(\Gamma)}$ are imbalanced: the regularity assumption
$\varphi \in H^{k}_{pw}(\Gamma)$ 
and the approximation properties of $M_h$ 
imply even $\inf_{\psi \in M_h} \|\varphi - \psi\|_{H^{-1/2}(\Gamma)}
\leq C h^{k+1/2} \|\varphi\|_{H^{k}_{pw}(\Gamma)}$. The {\em joint} approximation of $u$ and $\varphi$
in (\ref{eq:standard-estimate-2}) cannot exploit this. However, given additional regularity of 
$u$ and $\varphi$, this optimal rate $O(h^{k+1/2})$ for $\|\varphi - \varphi_h\|_{H^{-1/2}(\Gamma)}$ 
can be achieved as we now show in the following Theorem~\ref{thm:approximation-of-varphi}. 

%
\begin{theorem}
\label{thm:approximation-of-varphi}
Let $\opA \in C^{0,1}(\overline{\Omega})$ if $k=1$ and 
$\opA \in C^{1,1}(\overline{\Omega})$ if $k>1$. 
Let Assumption~\ref{assumption:shift-theorem} and~\ref{assumption:shift-theorem-bidual-problem} 
be valid\footnote{Recall that these two assumptions coincide in the present case.}. 
Let $X_h = V_h \times M_h$ be given by Assumption~\ref{assumption:X_h}. 
Let $(u,\varphi) \in X$ be the solution of \eqref{eq:rhs-weak}--\eqref{eq:weak-form}
and $(u_h,\varphi_h) \in X_h$ be the solution of \eqref{eq:weak-form-FEM}. 
\begin{enumerate}[(i)]
\item 
\label{item:thm:approximation-of-varphi-i}
The error bound (\ref{eq:standard-estimate-2}) holds, 
if $(u,\varphi) \in H^{k+1}(\Omega) \times H^{k-1/2}_{pw}(\Gamma)$. The constant $C>0$ in (\ref{eq:standard-estimate-2})
depends only on $\Omega$, the coercivity constant $\alpha_0$ of $\opA$, 
the upper bound $\|\opA\|_{L^\infty(\Omega)}$, the approximation order $k$, and the shape regularity of 
the quasi-uniform triangulations $\mathcal T_\Omega$, $\mathcal T_\Gamma$.
\item  
\label{item:thm:approximation-of-varphi-ii}
Suppose extra regularity
$u \in B^{k+3/2}_{2,1}(\Omega)$ and 
$\varphi \in H^{k}_{pw}(\Gamma)$.
Then, we have
\begin{align}
\label{eq:thm1}
\|\varphi - \varphi_h\|_{H^{-1/2}(\Gamma)} 
&\leq C h^{k+1/2} (1 + \delta_{k,1} |\ln h|) 
\|u\|_{B^{k+3/2}_{2,1}(\Omega)} + C h^{k+1/2} \|\varphi\|_{H^{k}_{pw}(\Gamma)},\\
\label{eq:thm2}
\|u - u_h\|_{L^2(S_h)} 
&\leq C h^{3/2+k} (1 + \delta_{k,1} |\ln h|) \|u\|_{B^{k+3/2}_{2,1}(\Omega)} 
+ C h^{3/2+k} \|\varphi\|_{H^{k}_{pw}(\Gamma)}. 
\end{align}
Here, $\delta_{k,1}$ denotes the Kronecker symbol, i.e., $\delta_{1,1} = 1$ and $\delta_{k,1} = 0$ for $k \ne 1$.
The constant $C>0$ depends only on $\Omega$, the coefficient $\opA$, the approximation order $k$, 
Assumptions~\ref{assumption:shift-theorem} and~\ref{assumption:shift-theorem-bidual-problem}, 
as well as shape regularity of the quasi-uniform triangulations 
$\mathcal T_\Omega$ and $\mathcal T_\Gamma$. More precisely, the dependence on $\opA$ is--in addition to 
Assumptions~\ref{assumption:shift-theorem}, \ref{assumption:shift-theorem-bidual-problem}--in terms of 
the coercivity constant $\alpha_0$ of $\opA$, the bound 
$\|\opA\|_{C^{0,1}(\overline{\Omega})}$ for $k=1$ and 
$\|\opA\|_{C^{1,1}(\overline{\Omega})}$ for $k>1$. 
\end{enumerate}
\end{theorem}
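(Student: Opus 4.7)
Part~(\ref{item:thm:approximation-of-varphi-i}) is immediate: the uniform discrete inf-sup condition of Lemma~\ref{lemma:solvability}~(\ref{lemma:solvability:item:vi}) yields quasi-optimality, and standard polynomial approximation in $H^1(\Omega)$ and $H^{-1/2}(\Gamma)$ gives the rate $O(h^k)$. For part~(\ref{item:thm:approximation-of-varphi-ii}), I would first prove the strip estimate~\eqref{eq:thm2} and then deduce the flux estimate~\eqref{eq:thm1} from it. Both steps rest on duality arguments exploiting the smallness (measure $O(h)$) of the strip $S_h$ through the Besov-regularity machinery of Lemmas~\ref{lemma:B32-regularity} and~\ref{lemma:5.4}.

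For~\eqref{eq:thm2}, I would fix $g\in L^2(\Omega)$ with $\supp g\subset\overline{S_h}$ and let $(w,\lambda)=T^{\dual}g$ with Galerkin approximation $(w_h,\lambda_h)\in X_h$. Combining the dual equations (tested against $u-u_h$ and $\varphi-\varphi_h$) with the primal Galerkin orthogonalities (tested against $w_h$ and $\lambda_h$, with $\varepsilon_1=\varepsilon_2=0$) and collecting terms produces the identity
\[
\langle g,u-u_h\rangle_\Omega = \widetilde a(u-u_h,w-w_h) - b(w-w_h,\varphi-\varphi_h) - b(u-u_h,\lambda-\lambda_h) - c(\varphi-\varphi_h,\lambda-\lambda_h).
\]
Each of the four terms is then treated by Cauchy-Schwarz. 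The primal factors are $O(h^{k+1/2})$ by part~(\ref{item:thm:approximation-of-varphi-i}) combined with the sharper approximation of piecewise polynomials on $u\in B^{k+3/2}_{2,1}(\Omega)$ and $\varphi\in H^k_{pw}(\Gamma)$. The dual factors require care: Lemma~\ref{lemma:B32-regularity} gives $\|w\|_{B^{3/2}_{2,\infty}}+\|\lambda\|_{B^0_{2,\infty}}\lesssim h^{1/2}\|g\|_{L^2}$, hence directly only $\|w-w_h\|_{H^1}\lesssim h^{1/2}\|g\|_{L^2}$. To extract the extra $h^{1/2}$ I would split $\Omega=(\Omega\setminus S_{\tilde ch})\cup S_{\tilde ch}$: Lemma~\ref{lemma:5.4} provides enhanced weighted interior regularity of $w$ away from the strip (where the dual right-hand side vanishes), giving an $O(h)$-approximation up to $\sqrt{|\ln h|}$ for $k=1$, while the small measure of $S_{\tilde ch}$ absorbs the rougher Besov bound inside. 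Taking the supremum over admissible $g$ yields~\eqref{eq:thm2}.

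For~\eqref{eq:thm1}, I would use the norm equivalence $\|\cdot\|_V\sim\|\cdot\|_{H^{-1/2}(\Gamma)}$. For arbitrary $\psi_h\in M_h$, writing $\varphi_h-\psi_h=(\varphi_h-\varphi)+(\varphi-\psi_h)$ and applying the second Galerkin orthogonality to $\psi=\varphi_h-\psi_h\in M_h$ yields
\[
\|\varphi_h-\psi_h\|_V^2 = c(\varphi-\psi_h,\varphi_h-\psi_h) + b(u-u_h,\varphi_h-\psi_h).
\]
The first summand is absorbed into $\inf_{\psi_h\in M_h}\|\varphi-\psi_h\|_V = O(h^{k+1/2})$ (using $\varphi\in H^k_{pw}$). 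The residual term $b(u-u_h,\varphi_h-\psi_h)=\langle(1/2-\dlo)(u-u_h),\varphi_h-\psi_h\rangle_\Gamma$ depends only on the trace of $u-u_h$; I would bound it via a second duality argument using the bidual problem~\eqref{eq:bidual-problem} with a $\widetilde\delta_\Gamma$-weighted right-hand side, invoking Lemma~\ref{lemma:regularity-weighted-rhs-vorn} to control the resulting weighted-space regularity and reduce the estimate to $\|u-u_h\|_{L^2(S_h)}$, which is already bounded by~\eqref{eq:thm2}. Combining the pieces and optimizing over $\psi_h$ delivers~\eqref{eq:thm1}.

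\textbf{Main obstacle.} The crucial difficulty is not the algebra of the duality identity but the extra $h^{1/2}$ required for the dual approximation. Naively, Lemma~\ref{lemma:B32-regularity} only yields $O(h^{1/2})$ in $H^1$ for $w-w_h$, so combined with the primal $O(h^{k+1/2})$ this gives only $O(h^{k+1})$, one half power short of~\eqref{eq:thm2}. The key insight is that the low regularity of $(w,\lambda)$ is \emph{localized} near $\Gamma$: on $\Omega\setminus S_h$, $w$ satisfies a homogeneous elliptic equation with full interior regularity, quantified in weighted norms by Lemma~\ref{lemma:5.4}. The balance between the weighted interior estimate and the crude Besov estimate inside the strip is where the bookkeeping is delicate, and precisely the place where the logarithmic factor enters for $k=1$ via Lemma~\ref{lemma:5.4}~(\ref{item:lemma:5.4-1}) and disappears for $k\geq 2$ via Lemma~\ref{lemma:5.4}~(\ref{item:lemma:5.4-2}).
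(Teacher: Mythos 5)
Your overall architecture---test the strip error against a dual solution, exploit the $O(h)$-measure of the strip through Besov regularity, then pass to the flux error via Galerkin orthogonality---is indeed the paper's, and you correctly identify Lemma~\ref{lemma:5.4} and weighted interior regularity as the mechanism behind the extra half power. But the identity you write for~\eqref{eq:thm2},
\[
\langle g, u-u_h\rangle_\Omega = \widetilde a(u-u_h,w-w_h) - b(w-w_h,\varphi-\varphi_h) - b(u-u_h,\lambda-\lambda_h) - c(\varphi-\varphi_h,\lambda-\lambda_h),
\]
while algebraically correct, cannot deliver the claimed rates as it stands. Its primal factors are the full Galerkin errors $u-u_h$, $\varphi-\varphi_h$, which by part~(\ref{item:thm:approximation-of-varphi-i}) are only $O(h^k)$ even under the extra Besov regularity; your claim that ``the primal factors are $O(h^{k+1/2})$'' does not follow. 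The missing step---the content of Lemma~\ref{lemma:error-on-strip}---is to use the \emph{dual} Galerkin orthogonalities~\eqref{eq:orthogonalities-dual-1}--\eqref{eq:orthogonalities-dual-2} to replace $u-u_h$ and $\varphi-\varphi_h$ by \emph{arbitrary} best-approximation errors $u-\IV u$, $\varphi-\IM\varphi$ (with $\IV u\in V_h$, $\IM\varphi\in M_h$ free). Only with the best approximants in place does the extra regularity of $u$ and $\varphi$ become usable.

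Two further points. (a) Your rate bookkeeping is off: Lemma~\ref{lemma:B32-regularity} combined with quasi-optimality and Lemma~\ref{lemma:approximation-properties-Vh-Mh}~(\ref{item:lemma:approximation-properties-Vh-Mh-3}) already yields $\|w-w_h\|_{H^1(\Omega)}+\|\lambda-\lambda_h\|_{H^{-1/2}(\Gamma)} = O(h)\,\|\cutoff e\|_{L^2(\Omega)}$ (this is Lemma~\ref{lemma:a-priori-dual}), not $O(h^{1/2})$. This $O(h)$ disposes of every term except $a(u-\IV u,w-w_h)$, because the off-diagonal primal factors live in trace norms where the best approximants are $O(h^{k+1/2})$. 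The real deficit is that $\|\nabla(u-\IV u)\|_{L^2(\Omega)}$ is only $O(h^k)$, and the remedy is the weighted split~\eqref{eq:cor:randnaehe-10} of $a(u-\IV u,w-w_h)$ across $S_D$ and $\Omega\setminus S_D$, pairing a $\widetilde\delta_\Gamma^{-1/2-\varepsilon}$-weighted interpolation bound near $\Gamma$ against the $\widetilde\delta_\Gamma^{1/2+\varepsilon}$-weighted dual-error bound of Lemma~\ref{lemma:w-wh-L1}. (b) The bidual problem and Lemma~\ref{lemma:regularity-weighted-rhs-vorn} belong in the proof of~\eqref{eq:thm2}, not~\eqref{eq:thm1}: they are what produces the weighted $L^2$ bound on $w-w_h$ (Lemma~\ref{lemma:weighted-w-wh}) that the local-FEM estimate in Lemma~\ref{lemma:w-wh-L1} needs as its lower-order input. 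For passing from~\eqref{eq:thm2} to~\eqref{eq:thm1}, the paper needs no second duality argument at all: one simply estimates $\|u-u_h\|_{H^{1/2}(\Gamma)}\lesssim \|u-\IV u\|_{H^{1/2}(\Gamma)} + h^{-1/2}\|\IV u-u_h\|_{L^2(\Gamma)}\lesssim \|u-\IV u\|_{H^{1/2}(\Gamma)} + h^{-1}\|\IV u-u_h\|_{L^2(S_h)}$ by inverse inequalities and reduces to~\eqref{eq:thm2}. The bidual detour you propose there would be considerably heavier and is not obviously viable, since the weighted bidual estimates control the dual error $w-w_h$, not the primal error $u-u_h$.
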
%


\subsection{Approximation estimates and proof of estimate~\eqref{eq:thm1} of Theorem~\protect{\ref{thm:approximation-of-varphi}}}
\label{section:approx}%

We recall that the spaces $V_h$ and $M_h$ have the following approximation
properties:

\begin{lemma}
\label{lemma:approximation-properties-Vh-Mh} 
\begin{enumerate}[(i)]
\item 
\label{item:lemma:approximation-properties-Vh-Mh-1} 
There is an elementwise defined (nodal) interpolation operator $\Ihk:C(\overline{\Omega}) \rightarrow V_h$
such that for integers $j\ge 0$, $\ell\ge 1$ with $0 \leq j \leq \ell+1 \leq k+1$ and every $K \in {\mathcal T}_\Omega$ and 
sufficiently smooth $u$
\begin{align*}
\|\nabla^j( u - \Ihk u)\|_{L^2(K)} &\leq C \,\diam{(K)}^{\ell-j+1} \|\nabla^{\ell+1} u\|_{L^2(K)}; \\
\|\nabla_f^j( u - \Ihk u)\|_{L^2(f)} &\leq C \,\diam{(f)}^{\ell-j+1} \|\nabla^{\ell+1}_f u\|_{L^2(f)}
\qquad \mbox{ for all faces $f \subset \partial K$;}
\end{align*}
here, $\nabla_f$ represents the surface gradient on the face $f$. 
\item 
\label{item:lemma:approximation-properties-Vh-Mh-2} 
For $\varepsilon \ge 0$ and fixed $0 < D < D^\prime $ 
$$
C^{-1}\,\|\widetilde \delta_\Gamma^{-1/2-\varepsilon} \nabla (u - \Ihk u)\|_{L^2(S_D)} 
\le h^k \|\widetilde\delta_\Gamma^{-1/2-\varepsilon} \nabla^{k+1} u\|_{L^2(S_{D+h})}
\le C\, h^k \|\nabla^{k+1} u\|_{B^{1/2}_{2,1}(S_{D^\prime})}
\begin{cases} |\ln h|^{1/2} , & \mbox{ if $\varepsilon = 0$}, \\
              h^{-\varepsilon} , & \mbox{ if $\varepsilon > 0$},
\end{cases}
$$ 
\item 
\label{item:lemma:approximation-properties-Vh-Mh-3} 
There are bounded linear operators $R_h:B^{3/2}_{2,\infty}(\Omega) \rightarrow V_h$ and 
$Q_h:B^0_{2,\infty}(\Gamma) \rightarrow M_h$ with 
$$
\|w - R_h w\|_{H^1(\Omega)} \leq C h^{1/2} \|w\|_{B^{3/2}_{2,\infty}(\Omega)}, 
\qquad 
\|\psi - Q_h \psi\|_{V} \leq C h^{1/2} \|\psi\|_{B^{0}_{2,\infty}(\Gamma)}. 
$$
\end{enumerate}%
The constant $C>0$ depends only on $k$ and the shape regularity of $\mathcal T_\Omega$ and $\mathcal T_\Gamma$,
respectively.
\end{lemma}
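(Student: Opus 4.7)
For part~\eqref{item:lemma:approximation-properties-Vh-Mh-1} the plan is to take $\Ihk$ to be the classical elementwise Lagrange interpolant associated with the principal lattice of degree $k$ on each simplex. Global continuity of $\Ihk u$ across element interfaces is automatic because the Lagrange nodes on a face $f$ are determined solely by $u|_f$; this in turn provides the crucial commuting property $(\Ihk u)|_f = J_h^{k,f}(u|_f)$, where $J_h^{k,f}$ is the corresponding face Lagrange interpolant acting only on intrinsic face data. Both the volume and face bounds then reduce to the standard Bramble--Hilbert estimate on a reference simplex (respectively a reference face) combined with affine scaling, using that $\ell \le k$ guarantees preservation of polynomials of degree $\ell$ by $\Ihk$.

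For part~\eqref{item:lemma:approximation-properties-Vh-Mh-2} I would proceed elementwise. For a simplex $K \in \mathcal T_\Omega$ meeting $S_D$, the oscillation of $\widetilde\delta_\Gamma$ on $K$ is bounded by $\diam{K} \lesssim h$, while $\widetilde\delta_\Gamma \ge h$ on $\Omega$, so shape regularity forces $\max_K \widetilde\delta_\Gamma^{-1/2-\varepsilon} \lesssim \min_K \widetilde\delta_\Gamma^{-1/2-\varepsilon}$ with constants independent of $K$. The weight can therefore be pulled out, and \eqref{item:lemma:approximation-properties-Vh-Mh-1} with $j=1$, $\ell=k$ yields
\[
\norm{\widetilde\delta_\Gamma^{-1/2-\varepsilon}\nabla(u - \Ihk u)}{L^2(K)} \lesssim h^k \norm{\widetilde\delta_\Gamma^{-1/2-\varepsilon}\nabla^{k+1} u}{L^2(K)}.
\]
Summing over all $K$ meeting $S_D$ (which for small $h$ are contained in $S_{D+h}$) establishes the first inequality. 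The second inequality is then a direct application of Lemma~\ref{lemma:weighted-embedding} to $\nabla^{k+1} u$, after a bounded extension off $S_{D'}$: estimate \eqref{eq:lemma:weighted-embedding-2} supplies the logarithmic factor when $\varepsilon=0$, and \eqref{eq:lemma:weighted-embedding-4} supplies the $h^{-\varepsilon}$ factor when $\varepsilon>0$.

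For part~\eqref{item:lemma:approximation-properties-Vh-Mh-3} the strategy is to take both operators as Scott--Zhang--type quasi-interpolants and then interpolate in the Besov scale. For $R_h$ I would use the Scott--Zhang projection onto $V_h$, which is $H^1$-stable and satisfies the classical integer-order bounds $\|w - R_h w\|_{H^1(\Omega)} \lesssim \|w\|_{H^1(\Omega)}$ and $\|w - R_h w\|_{H^1(\Omega)} \lesssim h\, \|w\|_{H^2(\Omega)}$. Real interpolation with $\theta = 1/2$, $q=\infty$ identifies the domain space as $(H^1(\Omega), H^2(\Omega))_{1/2,\infty} = B^{3/2}_{2,\infty}(\Omega)$ in the paper's convention, delivering the asserted $O(h^{1/2})$ rate. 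For $Q_h$ I would take the $L^2(\Gamma)$-orthogonal projection onto $M_h$; on quasi-uniform surface meshes this projection is $H^{-s}(\Gamma)$-stable for $s \in [0,1/2]$, so that $\|\psi - Q_h \psi\|_V \lesssim \|\psi\|_{H^{-1/2}(\Gamma)}$ while standard Bramble--Hilbert arguments yield $\|\psi - Q_h \psi\|_V \lesssim h\, \|\psi\|_{H^{1/2}_{pw}(\Gamma)}$. Real interpolation at $\theta=1/2$, $q=\infty$ combined with the reiteration theorem then identifies the resulting interpolation space with $B^0_{2,\infty}(\Gamma)$ as defined in Section~\ref{sec:notation}. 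I expect the main delicate point to be precisely this lower endpoint for $Q_h$: verifying $H^{-1/2}$-stability of the $L^2$-projection on quasi-uniform surface meshes and matching the resulting Besov identification with the nonstandard definition of $B^0_{2,\infty}(\Gamma)$ used in the paper.
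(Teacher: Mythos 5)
Parts~\eqref{item:lemma:approximation-properties-Vh-Mh-1}--\eqref{item:lemma:approximation-properties-Vh-Mh-2} of your proposal match the paper's route: the paper simply cites (i) as standard, and for (ii) combines (i) with \eqref{eq:lemma:weighted-embedding-2}--\eqref{eq:lemma:weighted-embedding-4} exactly as you do; your elementwise weight-comparability observation ($\widetilde\delta_\Gamma \ge h$ and oscillation $\lesssim h$ per element) is the right mechanism and is also the localization detail you need. Your Scott--Zhang construction of $R_h$ with real interpolation between $H^1$ and $H^2$ is a correct standard argument.

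The genuine gap is in your construction of $Q_h$. You propose the $L^2(\Gamma)$-orthogonal projection onto $M_h$ and then want the interpolation endpoints $(H^{-1/2}(\Gamma), H^{1/2}_{pw}(\Gamma))$. But $M_h = S^{k-1,0}(\mathcal T_\Gamma)$ consists of piecewise polynomials that are \emph{discontinuous} across element boundaries; such functions lie in $H^{1/2-\epsilon}(\Gamma)$ for every $\epsilon>0$ but not in $H^{1/2}(\Gamma)$. Consequently, for a genuine distribution $\psi\in H^{-1/2}(\Gamma)\setminus L^2(\Gamma)$ the duality pairing $\langle\psi,\mu\rangle$ against $\mu\in M_h$ is not defined, so the $L^2$-projection does not even extend to a bounded (or well-defined) map on $H^{-1/2}(\Gamma)$, let alone satisfy $\|\psi - Q_h\psi\|_V \lesssim \|\psi\|_{H^{-1/2}(\Gamma)}$. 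This is precisely the delicate endpoint you yourself flagged. The paper avoids it in two ways simultaneously: it takes $Q_h$ to be the projection in the \emph{$H^{-1/2}(\Gamma)$-inner product} (automatically $H^{-1/2}$-stable and well-defined on $H^{-1/2}$), and it interpolates between the \emph{interior} endpoints $H^{-\varepsilon}(\Gamma)$ and $H^{\varepsilon}(\Gamma)$ with $\varepsilon \in (0,1/2)$, using the rates $\|I-Q_h\|_{H^{-1/2}\leftarrow H^{\mp\varepsilon}}\lesssim h^{1/2\pm\varepsilon}$; interpolation at $\theta=1/2$ then yields $O(h^{1/2})$ on $B^0_{2,\infty}(\Gamma) = (H^{-\varepsilon},H^\varepsilon)_{1/2,\infty}$ directly, without any reiteration step involving $H^{1/2}_{pw}$. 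If you want to keep the $L^2$-projection, you would have to restrict to endpoints $H^{\pm\varepsilon}$ with $\varepsilon<1/2$ anyway (so that $Q_h$ is defined and stable there), at which point the argument coincides with the paper's apart from the choice of inner product.
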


\begin{proof}
The assertion 
\eqref{item:lemma:approximation-properties-Vh-Mh-1} is well-known. For 
\eqref{item:lemma:approximation-properties-Vh-Mh-2}, we note that 
\eqref{item:lemma:approximation-properties-Vh-Mh-1} yields the first inequality. The second 
inequality follows from estimates~\eqref{eq:lemma:weighted-embedding-2}--\eqref{eq:lemma:weighted-embedding-4} 
of Lemma~\ref{lemma:weighted-embedding}. In 
\eqref{item:lemma:approximation-properties-Vh-Mh-3}, we only show the construction of $Q_h$. 
It suffices to consider
the lowest order case $k=1$,
i.e., $M_h$ consists of piecewise constant functions. For simplicity,
let $Q_h$ be the projection in the $H^{-1/2}(\Gamma)$-inner product. Then for (fixed)
$\varepsilon \in (0,1/2)$ by standard
approximation properties
$\|\operatorname*{I} - Q_h\|_{H^{-1/2}(\Gamma) \leftarrow H^{-\varepsilon}(\Gamma)} \leq h^{1/2-\varepsilon}$
and
$\|\operatorname*{I} - Q_h\|_{H^{-1/2}(\Gamma) \leftarrow H^{\varepsilon}(\Gamma)} \leq h^{1/2+\varepsilon}$.
The result follows by interpolation.
\end{proof}

The proof of estimate~\eqref{eq:thm2} in Theorem~\ref{thm:approximation-of-varphi} is postponed to 
Section~\ref{sec:proof-of-thm:error-on-strip} since it requires further 
auxiliary results, which are provided in Section~\ref{section:blubber}--\ref{subsection:dual}. 
The estimate~\eqref{eq:thm1}, however, is an immediate consequence 
of~\eqref{eq:thm2} as we now show.

\begin{numberedproof}{Theorem~\protect{\ref{thm:approximation-of-varphi}, equation~(\ref{eq:thm1})}}
We suppose that~\eqref{eq:thm2} is valid.
The proof of~\eqref{eq:thm1} is based on the Galerkin orthogonality 
\eqref{eq:orthogonalities-primal-2} (with $\varepsilon_2 \equiv 0$ there). We have 
for arbitrary $\IV u \in V_h$ and $\IM \varphi \in M_h$ (these two elements will be chosen suitably 
below) 
\begin{eqnarray*}
\|\IM \varphi - \varphi_h\|^2_V &= & 
c(\IM \varphi -\varphi_h,\IM \varphi - \varphi_h)  \\
&=& 
c(\IM \varphi -\varphi,\IM \varphi - \varphi_h)  + 
c(\varphi -\varphi_h,\IM \varphi - \varphi_h)  \\
&\stackrel{\eqref{eq:orthogonalities-primal-2}}=& 
c(\IM \varphi -\varphi,\IM \varphi - \varphi_h)  - b(u - u_h,\IM \varphi - \varphi_h)
\\
&\lesssim &
\|\IM \varphi - \varphi\|_{V} \|\IM \varphi - \varphi_h\|_{V} 
+ \|u - u_h\|_{H^{1/2}(\Gamma)} \|\IM \varphi - \varphi_h\|_{V}. 
\end{eqnarray*}
Hence, 
$\displaystyle 
\|\IM \varphi - \varphi_h\|_{V} \lesssim  \|\varphi - \IM \varphi \|_{V} + 
\|u - u_h\|_{H^{1/2}(\Gamma)}.
$
The term $\|u - u_h\|_{H^{1/2}(\Gamma)}$ is estimated with 
an inverse estimate on $\Gamma$ and a suitable norm equivalence
as follows: 
\begin{eqnarray*}
\|u - u_h\|_{H^{1/2}(\Gamma)} &\leq& 
\|u - \IV u\|_{H^{1/2}(\Gamma)} + 
\|\IV u - u_h\|_{H^{1/2}(\Gamma)} 
\\
&\lesssim& 
\|u - \IV u\|_{H^{1/2}(\Gamma)} + 
h^{-1/2} \|\IV u - u_h\|_{L^{2}(\Gamma)}  \\
& \lesssim &
\|u - \IV u\|_{H^{1/2}(\Gamma)} + 
h^{-1} \|\IV u - u_h\|_{L^{2}(S_h)}. 
\end{eqnarray*}
Next,
we use continuity of the 
trace operator $\gamma:B^{1/2}_{2,1}(\Omega) \rightarrow L^2(\Gamma)$
(see, e.g., \cite[Thm.~{2.9.3}]{triebel95}) to get boundedness of 
$\gamma: B^{k+3/2}_{2,1}(\Omega) \rightarrow H^{k+1}_{pw}(\Gamma) \cap H^1(\Gamma)$. 
Selecting $\IV u = \Ihk u \in V_h $ as the nodal interpolant of 
Lemma~\ref{lemma:approximation-properties-Vh-Mh},
we get 
\begin{eqnarray}
\label{eq:foo-1}
\|u - \Ihk u \|_{L^{2}(\Gamma)} 
+ h \|u - \Ihk u \|_{H^{1}(\Gamma)} &\lesssim& h^{k+1} \|\gamma u\|_{H^{k+1}_{pw}(\Gamma)}, \\
\label{eq:foo-2}
\|u-\Ihk u\|_{L^2(S_h)} &\lesssim & h^{k+1} \|\nabla^{k+1} u\|_{L^{2}(S_{2h})}. 
\end{eqnarray}
An interpolation argument in combination with (\ref{eq:foo-1}) 
and $\|u\|_{H^{k+1}_{pw}(\Gamma)} \lesssim \|u\|_{B^{k+3/2}_{2,1}(\Omega)}$ gives 
\begin{equation}
\label{eq:foo-4}
\|u - \Ihk u\|_{H^{1/2}(\Gamma)} \lesssim h^{k+1/2} \|\gamma u\|_{H^{k+1}_{pw}(\Gamma)} 
\lesssim h^{k+1/2} \|u\|_{B^{k+3/2}_{2,1}(\Omega)}.
\end{equation}
In order to estimate (\ref{eq:foo-2}), we observe
that \cite[Lemma~{2.1}]{li-melenk-wohlmuth-zou10} gives 
$\|\nabla^{k+1} u\|_{L^2(S_{2h})} \lesssim h^{1/2} \|\nabla^{k+1} u\|_{B^{1/2}_{2,1}(\Omega)} 
\lesssim h^{1/2} \|u\|_{B^{k+3/2}_{2,1}(\Omega)}$. In total, we arrive at 
\begin{equation}
\label{eq:foo-3}
\|u - \Ihk u \|_{H^{1/2}(\Gamma)} \lesssim  h^{k+1/2} \|u\|_{B^{k+3/2}_{2,1}(\Omega)} 
\qquad \mbox{ and } \qquad 
\|u-\Ihk u\|_{L^2(S_h)} \lesssim  h^{k+3/2} \|u\|_{B^{k+3/2}_{2,1}(\Omega)}. 
\end{equation}
The approximation properties of $M_h$ yield the existence of 
$\IM \varphi \in M_h$ with $\|\varphi - \IM \varphi\|_{V} 
\lesssim h^{k+1/2} \|\varphi\|_{H^k_{pw}(\Gamma)}$. Combining these estimates with~\eqref{eq:thm2} yields 
\begin{eqnarray*}
\|\varphi - \varphi_h\|_{V} \lesssim 
\|\varphi - \IM \varphi \|_{V} + \|u - \Ihk u\|_{H^{1/2}(\Gamma)} 
+ h^{-1} \|u - \Ihk u\|_{L^2(S_h)} 
+ h^{-1} \|u - u_h\|_{L^2(S_h)}
\lesssim h^{k+1/2} \Big[ \|\varphi\|_{H^k_{pw}(\Gamma)}  + \|u\|_{B^{k+3/2}_{2,1}(\Omega)}\Big]. 
\end{eqnarray*}
This concludes the proof.
\end{numberedproof}

\subsection{Local estimates via duality arguments.}\label{section:blubber}
For the solution $(u,\varphi) \in X$ of \eqref{eq:weak-form}
and its Galerkin approximation $(u_h,\varphi_h) \in X_h$, which solves  \eqref{eq:weak-form-FEM}, 
we define the error 
\begin{equation}
\label{eq:e}
e:= u - u_h. 
\end{equation}
Take the cut-off function $\cutoff$ to be the characteristic function of $S_h$. 
Let $(w,\lambda) = T^\dual(\cutoff e)$ be the solution of the
dual problem
\begin{subequations}
\label{eq:dual-problem-concrete}
\begin{eqnarray} 
\label{eq:dual-problem-concrete-1}
\widetilde a(v,w) - b(v,\lambda) &=& \langle v,\cutoff e\rangle_\Omega 
\qquad \forall v \in H^1(\Omega), \\
\label{eq:dual-problem-concrete-2}
b(w,\psi) + c(\psi,\lambda) &=& 0 \qquad \forall \psi \in H^{-1/2}(\Gamma). 
\end{eqnarray} 
\end{subequations}
Its Galerkin approximation $(w_h,\lambda_h) \in X_h$ is given by 
\begin{subequations}
\label{eq:dual-problem-fem}
\begin{eqnarray} 
\label{eq:dual-problem-fem-1}
\widetilde a(v,w_h) - b(v,\lambda_h) &=& \langle v,\cutoff e\rangle_\Omega 
\qquad \forall v \in V_h, \\
\label{eq:dual-problem-fem-2}
b(w_h,\psi) + c(\psi,\lambda_h) &=& 0 \qquad \forall \psi \in M_h. 
\end{eqnarray} 
\end{subequations}
Subtracting 
\eqref{eq:dual-problem-fem} from 
\eqref{eq:dual-problem-concrete} leads to the 
following Galerkin orthogonalities:
\begin{subequations}
\label{eq:orthogonalities-dual}
\begin{eqnarray}
\label{eq:orthogonalities-dual-1}
\widetilde a(v,w - w_h) - b(v,\lambda - \lambda_h) &=& 0 \qquad \forall v \in V_h,  \\
\label{eq:orthogonalities-dual-2}
b(w - w_h,\psi) + c(\psi,\lambda - \lambda_h) &=& 0  
\qquad \forall \psi \in M_h.
\end{eqnarray}
\end{subequations}

\begin{lemma}
\label{lemma:error-on-strip}
For arbitrary pair $(\IV u,\IM \varphi) \in X_h$, we have 
\begin{eqnarray*}
\|\cutoff e\|_{L^2(\Omega)}^2 =
\widetilde a(u - \IV u,w - w_h) - b(u - \IV u,\lambda - \lambda_h) - 
b(w - w_h,\varphi - \IM \varphi) - c(\varphi - \IM \varphi,\lambda - \lambda_h) 
+ \varepsilon_1(w_h) - \varepsilon_2(\lambda_h). 
\end{eqnarray*}
\end{lemma}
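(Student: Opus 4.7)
The plan is to prove this via a standard duality/Aubin--Nitsche type bookkeeping that combines the primal Galerkin orthogonalities \eqref{eq:orthogonalities-primal} with the dual Galerkin orthogonalities \eqref{eq:orthogonalities-dual}. The argument has three distinct shifts and boils down to careful sign tracking; I do not expect any analytic difficulty, only algebraic care.

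First I would generate the $\|\cutoff e\|_{L^2(\Omega)}^2$ on the left-hand side by testing the dual problem \eqref{eq:dual-problem-concrete} correctly. Using $\cutoff^2 = \cutoff$ (since $\cutoff$ is a characteristic function), I take $v = e = u - u_h \in H^1(\Omega)$ in \eqref{eq:dual-problem-concrete-1} to obtain $\widetilde a(e,w) - b(e,\lambda) = \langle e,\cutoff e\rangle_\Omega = \|\cutoff e\|_{L^2(\Omega)}^2$, and subtract the trivially vanishing quantity $b(w,\varphi-\varphi_h)+c(\varphi-\varphi_h,\lambda) = 0$ obtained from \eqref{eq:dual-problem-concrete-2} with $\psi = \varphi - \varphi_h$. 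This yields the preliminary identity
\[
\|\cutoff e\|_{L^2(\Omega)}^2 = \widetilde a(e,w) - b(e,\lambda) - b(w,\varphi-\varphi_h) - c(\varphi-\varphi_h,\lambda).
\]

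Next I would use the \emph{primal} Galerkin orthogonalities \eqref{eq:orthogonalities-primal} to replace $w$ by $w-w_h$ and $\lambda$ by $\lambda-\lambda_h$ on the right-hand side. Writing $w = (w-w_h) + w_h$, the orthogonality \eqref{eq:orthogonalities-primal-1} applied with test function $w_h \in V_h$ gives $\widetilde a(e,w_h) = \varepsilon_1(w_h) + b(w_h,\varphi-\varphi_h)$; an analogous manipulation using \eqref{eq:orthogonalities-primal-2} with $\lambda_h \in M_h$ yields $b(e,\lambda_h) = \varepsilon_2(\lambda_h) - c(\varphi-\varphi_h,\lambda_h)$. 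Substituting and collecting terms produces
\[
\|\cutoff e\|_{L^2(\Omega)}^2 = \widetilde a(e, w-w_h) - b(e,\lambda-\lambda_h) - b(w-w_h,\varphi-\varphi_h) - c(\varphi-\varphi_h,\lambda-\lambda_h) + \varepsilon_1(w_h) - \varepsilon_2(\lambda_h).
\]

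Finally I would apply the \emph{dual} Galerkin orthogonalities \eqref{eq:orthogonalities-dual} to shift $e = u - u_h$ to $u - \IV u$ and $\varphi - \varphi_h$ to $\varphi - \IM \varphi$ in the first two and last two terms, respectively. Since $\IV u - u_h \in V_h$ and $\IM \varphi - \varphi_h \in M_h$, \eqref{eq:orthogonalities-dual-1} and \eqref{eq:orthogonalities-dual-2} give $\widetilde a(\IV u - u_h, w-w_h) - b(\IV u - u_h, \lambda - \lambda_h) = 0$ and $b(w - w_h, \IM \varphi - \varphi_h) + c(\IM \varphi - \varphi_h, \lambda - \lambda_h) = 0$, which allow the substitutions $\widetilde a(e,w-w_h) - b(e,\lambda-\lambda_h) = \widetilde a(u - \IV u, w-w_h) - b(u - \IV u, \lambda - \lambda_h)$ and $b(w-w_h,\varphi-\varphi_h) + c(\varphi-\varphi_h,\lambda-\lambda_h) = b(w-w_h,\varphi - \IM \varphi) + c(\varphi - \IM \varphi, \lambda - \lambda_h)$. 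Inserting these reproduces the claimed identity. The only ``obstacle'' is clerical: one must be careful that the two $b$-terms and the $c$-term carrying $\varphi - \IM \varphi$ enter with the correct minus sign, which is precisely what the antisymmetric roles of $b(\cdot,\cdot)$ in the primal vs.\ dual systems produce.
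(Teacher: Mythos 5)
Your proof is correct and follows essentially the same route as the paper: start from $\widetilde a(e,w)-b(e,\lambda)=\|\cutoff e\|_{L^2(\Omega)}^2$, invoke the primal Galerkin orthogonalities to introduce $\varepsilon_1(w_h)$, $\varepsilon_2(\lambda_h)$, and then shift $e\mapsto u-\IV u$ and $\varphi-\varphi_h\mapsto\varphi-\IM\varphi$ via the dual Galerkin orthogonalities. The only difference is organizational: you apply the primal orthogonalities first and then the dual shifts, whereas the paper interleaves them (shifting $e$ first, then handling $\widetilde a(e,w_h)$ and $b(e,\lambda_h)$ separately and invoking \eqref{eq:dual-problem-concrete-2}, \eqref{eq:dual-problem-fem-2} directly rather than via \eqref{eq:orthogonalities-dual-2}) --- the identities used and the result are the same.
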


\begin{proof} The proof follows from simple manipulations with 
the Galerkin orthogonalities~\eqref{eq:orthogonalities-primal}
and~\eqref{eq:orthogonalities-dual-1} and the defining equations: 
\begin{eqnarray*}
\langle \cutoff e,e\rangle_\Omega & \stackrel{\eqref{eq:dual-problem-concrete-1}}{=} & 
\widetilde a(e,w) - b(e,\lambda)  
\\ &=& 
\widetilde a(e,w - w_h) + \widetilde a(e,w_h) 
- b(e,\lambda - \lambda_h) - b(e,\lambda_h)  
\\ &=& 
\widetilde a(u - \IV u,w - w_h) + \widetilde a(\IV u - u_h,w - w_h)
+ \widetilde a(e,w_h) - b(u - \IV u,\lambda - \lambda_h) 
- b(\IV u - u_h,\lambda - \lambda_h) - b(e,\lambda_h)
\\ 
& \stackrel{\eqref{eq:orthogonalities-dual-1}}{=} &
\widetilde a(u - \IV u,w - w_h) - b(u - \IV u,\lambda - \lambda_h) 
+ \underbrace{\widetilde a(e,w_h)}_{=:I} - \underbrace{b(e,\lambda_h)}_{=:II}.  
\end{eqnarray*}
We rearrange the terms $I$ and $II$. 
\begin{eqnarray*}
I = \widetilde a(e,w_h) &\stackrel{\eqref{eq:orthogonalities-primal-1}}{=}&
b(w_h,\varphi - \varphi_h)  
+\varepsilon_1(w_h) 
\\
&=& b(w_h,\varphi - \IM \varphi) + b(w_h,\IM \varphi - \varphi_h)  
+ \varepsilon_1(w_h)\\
&=& b(w_h - w ,\varphi - \IM \varphi) + b(w,\varphi - \IM \varphi) + b(w_h,\IM \varphi - \varphi_h) 
+\varepsilon_1(w_h)\\ 
&\stackrel{\eqref{eq:dual-problem-concrete-2},\eqref{eq:dual-problem-fem-2}}{=} & b(w_h - w,\varphi - \IM \varphi) - c(\varphi - \IM \varphi,\lambda) 
-  c(\IM \varphi - \varphi_h,\lambda_h)  
+ \varepsilon_1(w_h) \\
II = b(e,\lambda_h)  
&\stackrel{\eqref{eq:orthogonalities-primal-2}}{=}& 
- c(\varphi - \varphi_h,\lambda_h)  + \varepsilon_2(\lambda_h). 
\end{eqnarray*}
Hence, we obtain 
\begin{align*}
\langle \cutoff e,e\rangle_\Omega &= 
\widetilde a(u - \IV u,w - w_h) - b(u - \IV u,\lambda - \lambda_h) 
+ I - II \\
&= 
\widetilde a(u - \IV u,w - w_h) - b(u - \IV u,\lambda - \lambda_h) 
+ b(w_h - w,\varphi-  \IM \varphi)\\
&\qquad \mbox{}   - c(\varphi - \IM \varphi,\lambda)
- c(\IM \varphi - \varphi_h,\lambda_h) + c(\varphi - \varphi_h,\lambda_h) 
+ \varepsilon_1(w_h) - \varepsilon_2(\lambda_h)
\\
&= 
\widetilde a(u - \IV u,w - w_h) - b(u - \IV u,\lambda - \lambda_h) 
+ b(w_h - w,\varphi-  \IM \varphi)  
- c(\varphi - \IM \varphi ,\lambda - \lambda_h)  + \varepsilon_1(w_h) - \varepsilon_2(\lambda_h), 
\end{align*}
which is the desired equality.
\end{proof}

\subsection{Analysis of the dual problems: estimating $w - w_h$ and $\lambda - \lambda_h$.}\label{subsection:dual}

Lemma~\ref{lemma:error-on-strip} shows that we can infer bounds 
for the error $u - u_h$ on a strip $S_h$ near $\Gamma$ from knowledge 
about the errors $w - w_h$ and $\lambda - \lambda_h$. The additional
two terms $\varepsilon_1(w_h)$ and $\varepsilon_2(\lambda_h)$ that
appear in Lemma~\ref{lemma:error-on-strip} will be bounded 
in Theorem~\ref{thm:variational-crimes} below; we recall that they
were introduced to treat certain types of variational crimes. 

We will need the following regularity assertions for the 
solution $(w,\lambda) = T^\dual (\cutoff e)$ of the dual
problem~\eqref{eq:dual-problem}, which follow from 
Lemma~\ref{lemma:B32-regularity}:
\begin{subequations}
\label{eq:regularity-of-w}
\begin{eqnarray}
\label{eq:regularity-of-w-1}
\|w\|_{B^{3/2}_{2,\infty}(\Omega)} + \|\lambda\|_{B^0_{2,\infty}(\Gamma)} 
&\lesssim& h^{1/2} \|\cutoff e\|_{L^2(\Omega)} , \\
\label{eq:regularity-of-w-2}
\|w\|_{H^{3/2+\varepsilon}(\Omega)} + \|\lambda\|_{H^\varepsilon_{pw}(\Gamma)} 
&\lesssim& h^{1/2-\varepsilon} \|\cutoff e\|_{L^2(\Omega)}  \quad 
\forall \,0<\varepsilon\le s_0-1/2.
\end{eqnarray}
\end{subequations}

\subsubsection{Error analysis of $w - w_h$ and $\lambda - \lambda_h$ in the energy norms.}

The uniform 
inf-sup stability of the bilinear form $A(\cdot,\cdot)$ (cf.~Lemma~\ref{lemma:solvability}) 
provides the following {\sl a priori} bound: 

\begin{lemma}
\label{lemma:a-priori-dual}
Let Assumption~\ref{assumption:shift-theorem} be valid. Then,
$\|w - w_h\|_{H^1(\Omega)} + \|\lambda - \lambda_h\|_{H^{-1/2}(\Gamma)} 
\leq C h \|\cutoff e\|_{L^2(\Omega)}. $
The constant $C>0$ depends only on $\Omega$, Assumption~\ref{assumption:shift-theorem},
the shape regularity of ${\mathcal T}_\Omega$, ${\mathcal T}_\Gamma$, 
and $\opA$ through the coercivity constant of $\opA$ and $\|\opA\|_{L^\infty(\Omega)}$.
\end{lemma}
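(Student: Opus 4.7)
The plan is to deduce the bound from the standard Galerkin quasi-optimality machinery of Lemma~\ref{lemma:solvability}, combined with the Besov-regularity bound \eqref{eq:regularity-of-w-1} for the dual solution and the $B^{3/2}_{2,\infty}$- and $B^{0}_{2,\infty}$-approximation operators $R_h$, $Q_h$ provided by Lemma~\ref{lemma:approximation-properties-Vh-Mh}\,(\ref{item:lemma:approximation-properties-Vh-Mh-3}).

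First I would observe that the dual problem \eqref{eq:dual-problem-concrete} is governed by (essentially) the same bilinear form $A(\cdot,\cdot)$ from Lemma~\ref{lemma:solvability}: owing to the symmetry of $\widetilde a(\cdot,\cdot)$ and $c(\cdot,\cdot)$, the adjoint system differs from the primal only by a cosmetic relabelling of trial and test arguments. Hence Lemma~\ref{lemma:solvability}\,(\ref{lemma:solvability:item:vi}) applies to \eqref{eq:dual-problem-concrete}--\eqref{eq:dual-problem-fem} as well, provided $X_h$ contains an element $(0,\xi)$ with $\langle\xi,1\rangle_\Gamma\neq 0$; by Assumption~\ref{assumption:X_h}, $M_h\supset\mathcal P_0$ so $(0,1)\in X_h$ is admissible. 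This yields the quasi-optimal estimate
\begin{equation*}
\|w-w_h\|_{H^1(\Omega)} + \|\lambda-\lambda_h\|_{V}
\le C\inf_{(v_h,\psi_h)\in X_h}\bigl[\|w-v_h\|_{H^1(\Omega)} + \|\lambda-\psi_h\|_{V}\bigr].
\end{equation*}

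Next I would insert the specific approximations $v_h:=R_h w\in V_h$ and $\psi_h:=Q_h\lambda\in M_h$ from Lemma~\ref{lemma:approximation-properties-Vh-Mh}\,(\ref{item:lemma:approximation-properties-Vh-Mh-3}) to get
\begin{equation*}
\|w - w_h\|_{H^1(\Omega)} + \|\lambda - \lambda_h\|_{V}
\le C h^{1/2}\bigl[\|w\|_{B^{3/2}_{2,\infty}(\Omega)} + \|\lambda\|_{B^{0}_{2,\infty}(\Gamma)}\bigr].
\end{equation*}
Finally I would feed in the dual regularity bound \eqref{eq:regularity-of-w-1}, which states that the right-hand side above is controlled by $h^{1/2}\|\cutoff e\|_{L^2(\Omega)}$. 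Together with the equivalence $\|\cdot\|_V\sim\|\cdot\|_{H^{-1/2}(\Gamma)}$ of Section~\ref{sec:bilinear-forms}, this yields the claimed bound with total rate $h^{1/2}\cdot h^{1/2}=h$.

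There is no genuinely hard step; the mild subtlety lies in verifying that Lemma~\ref{lemma:solvability}\,(\ref{lemma:solvability:item:vi}) indeed transfers to the dual formulation \eqref{eq:dual-problem-concrete}. This follows by checking that the adjoint operator of $\mathbf A$ is injective (again, because $(-1/2+K)1=-1$ together with $\langle 1,\xi\rangle_\Gamma\neq 0$ eliminates the kernel of the adjoint in exactly the same way as in the proof of Lemma~\ref{lemma:solvability}\,(\ref{lemma:solvability:item:iii})), so that the primal-side G{\aa}rding inequality translates into uniform discrete inf-sup stability for the dual. All remaining ingredients, namely the regularity \eqref{eq:regularity-of-w-1} and the Besov-type approximation of Lemma~\ref{lemma:approximation-properties-Vh-Mh}\,(\ref{item:lemma:approximation-properties-Vh-Mh-3}), are available off the shelf.
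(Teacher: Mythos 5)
Your proposal is correct and follows the same route as the paper: invoke the uniform discrete inf-sup stability of Lemma~\ref{lemma:solvability}~(\ref{lemma:solvability:item:vi}) to get quasi-optimality for the dual pair, then combine the regularity bound~\eqref{eq:regularity-of-w-1} with the Besov approximation operators of Lemma~\ref{lemma:approximation-properties-Vh-Mh}~(\ref{item:lemma:approximation-properties-Vh-Mh-3}) to obtain the $O(h)=O(h^{1/2}\cdot h^{1/2})$ rate. The closing remark about the adjoint of $\mathbf A$ is superfluous: as your own first observation already notes, the symmetry of $\widetilde a$ and $c$ makes the dual problem literally the form $A((w,\lambda);(v,\psi))=f(v)$ with the \emph{same} bilinear form $A$, so Lemma~\ref{lemma:solvability}~(\ref{lemma:solvability:item:vi}) applies directly without any separate adjoint-injectivity check.
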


\begin{proof}
As observed in Section~\ref{sec:dual}, the dual 
problem corresponds to a transmission problem and is hence covered by 
Lemma~\ref{lemma:solvability}.
By the uniform inf-sup stability ascertained in
Lemma~\ref{lemma:solvability}, \eqref{lemma:solvability:item:vi}, we have the 
quasi-optimality~\eqref{eq:quasi-optimality}.
Combining the 
regularity assertion \eqref{eq:regularity-of-w-1} with the approximation
properties of 
Lemma~\ref{lemma:approximation-properties-Vh-Mh} gives 
\begin{align*}
\|w - w_h\|_{H^1(\Omega)} + \|\lambda - \lambda_h\|_{H^{-1/2}(\Gamma)} 
\leq C \inf_{(v,\mu) \in X_h} 
\Big[\|w - v\|_{H^1(\Omega)} + \|\lambda - \mu\|_{H^{-1/2}(\Gamma)} \Big]
&\leq C h^{1/2} \Big[ \|w\|_{B^{3/2}_{2,\infty}(\Omega)} 
                      + \|\lambda\|_{B^{0}_{2,\infty}(\Gamma)}\Big]
\\&\leq C h^{1/2} h^{1/2} \|\cutoff e\|_{L^2(\Omega)}. 
\end{align*}
This concludes the proof.
\end{proof}

The estimates of Lemma~\ref{lemma:a-priori-dual}
allow us to control the terms
$b(u - \IV u,\lambda - \lambda_h)$,
$b(w - w_h,\varphi- \IM \varphi)$,
$c(\varphi - \IM \varphi,\lambda- \lambda_h)$
that appear in Lemma~\ref{lemma:error-on-strip}: 
\begin{eqnarray}
\label{eq:estimate-most-terms-of-lemma:a-priori-dual}
\left| b(u - \IV u,\lambda - \lambda_h) 
\right| + 
\left|
b(w - w_h,\varphi- \IM \varphi)
\right| + 
\left| c(\varphi - \IM \varphi,\lambda-  \lambda_h)
\right| 
\\\qquad\leq C h \|\cutoff e\|_{L^2(\Omega)} 
\Big[ \|u - \IV u\|_{H^{1/2}(\Gamma)} + 
       \|\varphi - \IM \varphi\|_{H^{-1/2}(\Gamma)} 
\Big].
\end{eqnarray}
Moreover, Lemma~\ref{lemma:a-priori-dual} yields the estimate
\begin{equation}
\label{eq:estimate-part-of-tilde-a-of-lemma:a-priori-dual}
\left|\langle \hyp (u - \IV u),w - w_h\rangle_\Gamma\right| 
\leq C h \|\cutoff e\|_{L^2(\Omega)} \|u - \IV u\|_{H^{1/2}(\Gamma)} ,
\end{equation}
which is a part of the term $\widetilde a(u - \IV u,w - w_h)$ in Lemma~\ref{lemma:error-on-strip}.
Thus, from the terms appearing in Lemma~\ref{lemma:error-on-strip}, only the term 
$a(u - \IV u,w - w_h)$ remains to be controlled. Its analysis is more elaborate and
requires an analysis of $w - w_h$ in weighted norms. To see how this comes about, 
we fix $D > 0$ and write with a parameter $\varepsilon \ge 0$ that will be selected later
\begin{align}
\label{eq:cor:randnaehe-10}
|a(u - \IV u,w - w_h)|  &=  \left| 
\int_{S_{D}} \opA \nabla (u - \IV u) \cdot \nabla (w - w_h) 
+ 
\int_{\Omega\setminus S_{D}} \opA \nabla (u - \IV u) \cdot \nabla (w - w_h) 
\right| 
\\
\nonumber 
&\lesssim 
\|\widetilde\delta_\Gamma^{-1/2-\varepsilon} \nabla (u - \IV u)\|_{L^2(S_D)} 
\|\widetilde\delta_\Gamma^{1/2+\varepsilon} \nabla (w - w_h)\|_{L^2(S_D)} 
+ \|\nabla(u - \IV u)\|_{L^2(\Omega\setminus S_{D})}
 \|\nabla(w - w_h)\|_{L^2(\Omega\setminus S_{D})}. 
\end{align} 
The choice $\IV u  = \Ihk u$ with the nodal interpolant $\Ihk u$ of 
Lemma~\ref{lemma:approximation-properties-Vh-Mh} puts us on familiar ground 
for the factors 
$\|\widetilde\delta_\Gamma^{-1/2-\varepsilon} \nabla (u - \IV u)\|_{L^2(S_D)}$ 
and  
$\|\nabla (u - \IV u)\|_{L^2(S_D)}$. 
Hence,
we are left with estimating  
$\|\widetilde\delta_\Gamma^{1/2+\varepsilon} \nabla (w - w_h)\|_{L^2(S_D)}$ and 
$\|\nabla (w - w_h)\|_{L^2(\Omega\setminus S_{D})}$, which is achieved in the
subsequent Section~\ref{sec:w-w_h-nonstandard}. Although the parameter $\varepsilon \ge 0$ 
is arbitrary at this point, we mention that we will select $\varepsilon > 0$ arbitrary (but small)
for the case $k > 1$ and $\varepsilon = 0$ for the lowest order case $k = 1$. 

\subsubsection{Error analysis of $w - w_h$ and $\lambda - \lambda_h$ in weighted norms.}
\label{sec:w-w_h-nonstandard}

We estimate $\|\widetilde\delta_\Gamma^{1/2+\varepsilon} \nabla (w - w_h)\|_{L^2(S_D)}$ in a manner that is 
structurally similar to the procedure in \cite{melenk-wohlmuth14a} and also \cite[Sec.~{5.1.2}]{melenk-wohlmuth12}. 
Basically, we employ tools from local error analysis
of FEM as described, for example, in \cite[Sec.~{5.3}]{wahlbin95} to control $w - w_h$ in terms 
of a best approximation in a weighted $H^1$-norm and a lower-order term in a weighted $L^2$-norm. 
The best approximation in a weighted $H^1$-norm is estimated with a standard nodal interpolant; 
the lower-order term in a weighted $L^2$-norm requires more care and is handled in the following 
Lemma~\ref{lemma:weighted-w-wh}. 

\begin{lemma}
\label{lemma:weighted-w-wh}
Let Assumptions~\ref{assumption:shift-theorem} and \ref{assumption:shift-theorem-bidual-problem}
be valid. 
With the regularized distance function $\widetilde \delta_\Gamma = \delta_\Gamma +h$ from~\eqref{eq:strip-delta} we have 
for $0 < \varepsilon \leq s_0-1/2$ 
\begin{eqnarray}
\label{eq:lemma:weighted-w-wh-1}
\|\widetilde \delta_\Gamma ^{-1/2} (w - w_h)\|_{L^2(\Omega)} &\leq& 
C_1 h^{3/2} |\ln h|^{1/2} \|\cutoff e\|_{L^2(\Omega)}, \\
\label{eq:lemma:weighted-w-wh-2}
\|\widetilde \delta_\Gamma^{-1/2+\varepsilon} (w - w_h)\|_{L^2(\Omega)} &\leq& 
C_2 h^{3/2+\varepsilon}  \|\cutoff e\|_{L^2(\Omega)}. 
\end{eqnarray}
The constant $C_1>0$ depends on the same quantities as the constant in Lemma~\ref{lemma:a-priori-dual}
and additionally on Assumption~\ref{assumption:shift-theorem-bidual-problem}. The constant $C_2>0$
depends furthermore on $\varepsilon$. 
\end{lemma}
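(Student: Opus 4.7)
The proof is a weighted Aubin--Nitsche duality argument: to estimate $\|\widetilde\delta_\Gamma^{-1/2}(w-w_h)\|_{L^2(\Omega)}$ (and analogously $\|\widetilde\delta_\Gamma^{-1/2+\varepsilon}(w-w_h)\|_{L^2(\Omega)}$), I will bound $(w-w_h,g)_{L^2(\Omega)}$ with the particular choice of right-hand side $g=\widetilde\delta_\Gamma^{-1}(w-w_h)$ (resp.\ $g=\widetilde\delta_\Gamma^{-1+2\varepsilon}(w-w_h)$), so that the pairing equals $\|\widetilde\delta_\Gamma^{-1/2}(w-w_h)\|^2_{L^2}$ (resp.\ $\|\widetilde\delta_\Gamma^{-1/2+\varepsilon}(w-w_h)\|^2_{L^2}$). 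This explains the choice of weight on the right-hand sides analyzed in Lemma~\ref{lemma:regularity-weighted-rhs-vorn}.

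Concretely, let $(\tilde w,\tilde\lambda)=T^\bidual(g)\in X$. Testing \eqref{eq:bidual-problem} with $v=w-w_h$ in the first equation gives
$$
\langle w-w_h,g\rangle_\Omega = \widetilde a(\tilde w,w-w_h) - b(w-w_h,\tilde\lambda).
$$
For arbitrary $(\tilde w_h,\tilde\lambda_h)\in X_h$, I split $\tilde w=(\tilde w-\tilde w_h)+\tilde w_h$ and $\tilde\lambda=(\tilde\lambda-\tilde\lambda_h)+\tilde\lambda_h$. Using the symmetry of $\widetilde a$ and $c$, Galerkin orthogonalities~\eqref{eq:orthogonalities-dual-1} with $v=\tilde w_h$ and \eqref{eq:orthogonalities-dual-2} with $\psi=\tilde\lambda_h$, and finally testing~\eqref{eq:bidual-problem} with $\psi=\lambda-\lambda_h$ to get $b(\tilde w,\lambda-\lambda_h)=-c(\tilde\lambda,\lambda-\lambda_h)$, the leftover terms involving $\tilde w_h$ and $\tilde\lambda_h$ can be rewritten as differences $\tilde w-\tilde w_h$, $\tilde\lambda-\tilde\lambda_h$. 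The result is the clean identity
$$
\langle w-w_h,g\rangle_\Omega = \widetilde a(\tilde w-\tilde w_h,w-w_h) - b(\tilde w-\tilde w_h,\lambda-\lambda_h) - b(w-w_h,\tilde\lambda-\tilde\lambda_h) - c(\tilde\lambda-\tilde\lambda_h,\lambda-\lambda_h),
$$
which, by continuity of the bilinear forms, is bounded by $C\bigl[\|\tilde w-\tilde w_h\|_{H^1(\Omega)}+\|\tilde\lambda-\tilde\lambda_h\|_V\bigr]\bigl[\|w-w_h\|_{H^1(\Omega)}+\|\lambda-\lambda_h\|_V\bigr]$.

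The two factors are now estimated separately. The second factor is exactly $O(h)\|\cutoff e\|_{L^2(\Omega)}$ by the \emph{a priori} bound of Lemma~\ref{lemma:a-priori-dual}. For the first factor, I pick $(\tilde w_h,\tilde\lambda_h)\in X_h$ as good approximants of $(\tilde w,\tilde\lambda)$. For \eqref{eq:lemma:weighted-w-wh-1}, I use the operators from Lemma~\ref{lemma:approximation-properties-Vh-Mh}\eqref{item:lemma:approximation-properties-Vh-Mh-3} combined with the Besov regularity~\eqref{eq:lemma:regularity-weighted-rhs-vorn-1}, yielding
$$
\|\tilde w-\tilde w_h\|_{H^1(\Omega)}+\|\tilde\lambda-\tilde\lambda_h\|_V \lesssim h^{1/2}\bigl(\|\tilde w\|_{B^{3/2}_{2,\infty}(\Omega)}+\|\tilde\lambda\|_{B^0_{2,\infty}(\Gamma)}\bigr) \lesssim h^{1/2}|\ln h|^{1/2}\|\widetilde\delta_\Gamma^{-1/2}(w-w_h)\|_{L^2(\Omega)}.
$$
Combining, $\|\widetilde\delta_\Gamma^{-1/2}(w-w_h)\|^2 \lesssim h^{3/2}|\ln h|^{1/2}\|\widetilde\delta_\Gamma^{-1/2}(w-w_h)\|\,\|\cutoff e\|_{L^2(\Omega)}$, and dividing through yields~\eqref{eq:lemma:weighted-w-wh-1}. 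For \eqref{eq:lemma:weighted-w-wh-2}, the same argument applies using instead the nodal/quasi-interpolation approximation rates $h^{1/2+\varepsilon}$ on $H^{3/2+\varepsilon}(\Omega)\times H^\varepsilon_{pw}(\Gamma)$ together with the unweighted regularity bound~\eqref{eq:lemma:regularity-weighted-rhs-vorn-2}, which gives an extra factor $h^\varepsilon$ instead of $|\ln h|^{1/2}$.

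The main obstacle is the cancellation argument that produces the clean difference-only identity above: since the bilinear form $A$ is \emph{not} symmetric (the off-diagonal blocks $\pm b$ have opposite signs), one cannot simply transpose $A$ and invoke Galerkin orthogonality. The correct recipe is to use the symmetries of $\widetilde a$ and $c$ selectively and to apply the bidual's second equation~\eqref{eq:bidual-problem} with the test function $\psi=\lambda-\lambda_h$ as an additional relation, so that every ``bare'' occurrence of $\tilde w_h$ or $\tilde\lambda_h$ is converted into a difference $\tilde w-\tilde w_h$ or $\tilde\lambda-\tilde\lambda_h$. Once this identity is in place, the rest is a direct combination of Lemma~\ref{lemma:regularity-weighted-rhs-vorn}, Lemma~\ref{lemma:approximation-properties-Vh-Mh}\eqref{item:lemma:approximation-properties-Vh-Mh-3}, and Lemma~\ref{lemma:a-priori-dual}.
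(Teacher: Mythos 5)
Your proposal is correct and follows essentially the same route as the paper: a weighted duality argument via the bidual problem~\eqref{eq:dual-dual}, the same choice of right-hand side $g=\widetilde\delta_\Gamma^{-1}e_w$ (resp.\ $g=\widetilde\delta_\Gamma^{-1+2\varepsilon}e_w$), the same cancellation mechanism using Galerkin orthogonalities~\eqref{eq:orthogonalities-dual-1}--\eqref{eq:orthogonalities-dual-2} and the second bidual equation tested with $\lambda-\lambda_h$, leading to the identical ``difference-only'' identity, and the same trio of auxiliary results (Lemma~\ref{lemma:regularity-weighted-rhs-vorn}, Lemma~\ref{lemma:approximation-properties-Vh-Mh}\eqref{item:lemma:approximation-properties-Vh-Mh-3}, Lemma~\ref{lemma:a-priori-dual}) to close the estimate. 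The only cosmetic difference is that you invoke Lemma~\ref{lemma:a-priori-dual} directly for the factor $\|w-w_h\|_{H^1}+\|\lambda-\lambda_h\|_V$, whereas the paper re-derives that bound inline from \eqref{eq:lemma:B32-regularity-2} and quasi-optimality.
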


\begin{proof}
Both estimates require yet another duality argument. 

{\em Proof of \eqref{eq:lemma:weighted-w-wh-1}:}
Abbreviate
$\displaystyle 
e_w:= w - w_h
$
and let
$(z,\psi) := T^\bidual (\widetilde \delta_\Gamma^{-1} e_w)$
denote the solution of the bidual problem
\begin{subequations}
\label{eq:dual-dual}
\begin{eqnarray}
\label{eq:dual-dual-1}
\widetilde a(z,v) - b(v,\psi) &=& \langle \widetilde\delta_\Gamma^{-1} e_w,v\rangle_\Omega \qquad \forall v \in H^1(\Omega), \\
\label{eq:dual-dual-2}
b(z,\mu) + c(\psi,\mu) &=& 0 \qquad \forall \mu \in H^{-1/2}(\Gamma), 
\end{eqnarray}
\end{subequations}
{}From this, we get with $v = e_w$ for arbitrary $\IV z \in V_h$ and $\IM \psi \in M_h$
\begin{eqnarray*}
\|\widetilde \delta_\Gamma^{-1/2} e_w\|^2_{L^2(\Omega)} 
&\stackrel{\eqref{eq:dual-dual-1}}{=}& 
\widetilde a(z,e_w) - b(e_w,\psi) \\
&=&
\widetilde a(z - \IV z,e_w) + \widetilde a(\IV z,e_w) - b(e_w,\psi-\IM \psi) -b(e_w,\IM \psi)\\
&\stackrel{\eqref{eq:orthogonalities-dual-1},\eqref{eq:orthogonalities-dual-2}}{=}&  
\widetilde a(z - \IV z,e_w) + b(\IV z,\lambda - \lambda_h)- b(e_w,\psi - \IM \psi)  
+ c(\IV \psi,\lambda - \lambda_h) \\
&=&  \widetilde a(z - \IV z,e_w) + b(\IV z-z,\lambda - \lambda_h) - b(e_w,\psi - \IM \psi)  
+ c(\IM \psi-\psi,\lambda - \lambda_h) \\
&& \qquad 
+ b(z,\lambda - \lambda_h) + c(\psi,\lambda - \lambda_h) \\
&\stackrel{\eqref{eq:dual-dual-2}} {=}& 
\widetilde a(z - \IV z,e_w) + b(\IV z-z,\lambda - \lambda_h) - b(e_w,\psi - \IM \psi)  
 + c(\IM \psi-\psi,\lambda - \lambda_h).  
\end{eqnarray*}
This yields
\begin{align*}
 \|\widetilde \delta_\Gamma^{-1/2} e_w\|^2_{L^2(\Omega)}
 \lesssim\Big[
 \|z-\IV z\|_{H^1(\Omega)} + \|\psi-\IM \psi\|_{H^{-1/2}(\Gamma)}
 \Big]
 \Big[
 \|w-w_h\|_{H^1(\Omega)} + \|\lambda-\lambda_h\|_{H^{-1/2}(\Gamma)} 
 \Big].
\end{align*}
By virtue of Assumption~\ref{assumption:shift-theorem-bidual-problem} and hence 
the \emph{a~priori} estimate~\eqref{eq:lemma:regularity-weighted-rhs-vorn-1}
of Lemma~\ref{lemma:regularity-weighted-rhs-vorn}, we have 
\begin{align*}
 (z,\psi) = T^{\rm bidual}(\widetilde\delta_\Gamma^{-1}e_w) 
 \in B_{2,\infty}^{3/2}(\Omega) \times B_{2,\infty}^0(\Gamma)
 \quad\text{with}\quad
 \|z\|_{B_{2,\infty}^{3/2}(\Omega)}
 + \|\psi\|_{B_{2,\infty}^0(\Gamma)}
 \lesssim |\ln h|^{1/2}\|\widetilde\delta_\Gamma^{-1/2}e_w\|_{L^2(\Omega)}.
\end{align*}
Lemma~\ref{lemma:approximation-properties-Vh-Mh}, (\ref{item:lemma:approximation-properties-Vh-Mh-3})
yields
\begin{align*}
 \inf_{(\IV z,\IM \psi)\in X_h}\Big[
 \|z-\IV z\|_{H^1(\Omega)} + \|\psi-\IM \psi\|_{H^{-1/2}(\Gamma)}
 \Big]
 \lesssim h^{1/2}\Big[
  \|z\|_{B_{2,\infty}^{3/2}(\Omega)}
 + \|\psi\|_{B_{2,\infty}^0(\Gamma)}
 \Big]
 \lesssim h^{1/2}|\ln h|^{1/2}\,\|\widetilde\delta_\Gamma^{-1/2}e_w\|_{L^2(\Omega)}.
\end{align*}
By virtue of Assumption~\ref{assumption:shift-theorem} and hence 
the {\sl a priori} estimate~\eqref{eq:lemma:B32-regularity-2}
of Lemma~\ref{lemma:B32-regularity}, we have 
\begin{align*}
 (w,\lambda)=T^{\rm dual}(\chi_{S_h}e)
  \in B_{2,\infty}^{3/2}(\Omega) \times B_{2,\infty}^0(\Gamma)
 \quad\text{with}\quad
 \|w\|_{B_{2,\infty}^{3/2}(\Omega)}
 + \|\lambda\|_{B_{2,\infty}^0(\Gamma)}
 \lesssim h^{1/2}\,\|\chi_{S_h}e\|_{L^2(\Omega)}.
\end{align*}
With the quasi-optimality~\eqref{eq:quasi-optimality} for $(w,\lambda)$,
we infer
\begin{align*}
 \|w-w_h\|_{H^1(\Omega)} + \|\lambda-\lambda_h\|_{H^{-1/2}(\Gamma)} 
 &\lesssim
 \inf_{(\IV w,\IM \lambda)\in X_h}\Big[
 \|w-\IV w\|_{H^1(\Omega)} + \|\lambda-\IM \lambda\|_{H^{-1/2}(\Gamma)}
 \Big]\\
 &\lesssim h^{1/2}\Big[\|w\|_{B_{2,\infty}^{3/2}(\Omega)}
 + \|\lambda\|_{B_{2,\infty}^0(\Gamma)}
\Big]
 \lesssim h\,\|\chi_{S_h}e\|_{L^2(\Omega)}.
\end{align*}
Altogether, we arrive at 
$$
\|\widetilde \delta_\Gamma^{-1/2} e_w\|_{L^2(\Omega)} 
\lesssim  h^{3/2}|\ln h|^{1/2} \|\cutoff e\|_{L^2(\Omega)}. 
$$
{\em Proof of \eqref{eq:lemma:weighted-w-wh-2}:}
Define the bidual problem as follows: 
\begin{subequations}
\label{eq:dual-dual-k>1}
\begin{eqnarray}
\label{eq:dual-dual-k>1-1}
\widetilde a(z,v) - b(v,\psi) &=& 
\langle \widetilde\delta_\Gamma^{-1+2\varepsilon} e_w,v\rangle_\Omega \qquad \forall v \in H^1(\Omega), \\
\label{eq:dual-dual-k>1-2}
b(z,\mu) + c(\psi,\mu) &=& 0 \qquad \forall \mu \in H^{-1/2}(\Gamma). 
\end{eqnarray}
\end{subequations}
Then, we may proceed completely analogously as in the proof of 
\eqref{eq:lemma:weighted-w-wh-1} above. With the {\sl a priori} estimate \eqref{eq:lemma:regularity-weighted-rhs-vorn-2}
of Lemma~\ref{lemma:regularity-weighted-rhs-vorn}, we obtain the bound 
$
\|z\|_{H^{3/2+\varepsilon}(\Omega)} + \|\psi\|_{H^{\varepsilon}_{pw}} \lesssim \|\widetilde\delta_\Gamma^{-1/2+\varepsilon} e_w\|_{L^2(\Omega)}.
$
Therefore, classical approximation estimates give
\begin{eqnarray*}
\inf_{(\IV z,\IM \psi) \in X_h} \Big[
\|z - \IV z\|_{H^1(\Omega)}  + \|\psi - \IM \psi\|_{V}\Big]
&\lesssim h^{1/2+\varepsilon} \|\widetilde\delta_\Gamma^{-1/2+\varepsilon} e_w\|_{L^2(\Omega)}. 
\end{eqnarray*}
We employ the {\it a~priori estimate}~\eqref{eq:lemma:B32-regularity-3} of Lemma~\ref{lemma:B32-regularity} and argue as above to see
$$
\| \widetilde\delta_\Gamma^{-1/2+\varepsilon} e_w\|_{L^2(\Omega)} 
\lesssim h^{1/2+\varepsilon} \Big[ \|w - w_h\|_{H^1(\Omega)} + \|\lambda - \lambda_h\|_{V}\Big] 
\leq C h^{3/2+\varepsilon} \|\cutoff e\|_{L^2(\Omega)}. 
$$
This concludes the proof.
\end{proof}
We now turn to estimating $\|\widetilde\delta_\Gamma^{1/2+\varepsilon} \nabla (w - w_h)\|_{L^2(S_D)}$: 

\begin{lemma}
\label{lemma:w-wh-L1}
Let Assumptions~\ref{assumption:shift-theorem} and~\ref{assumption:shift-theorem-bidual-problem}
be valid. 
\begin{enumerate}[(i)]
\item 
There are constants $C_1$, $C_2>0$ such that the following is true: 
\label{item:lemma:w-wh-L1-i}
\begin{align*}
 \mbox{ If $k = 1$, then } &\quad 
 \|\widetilde\delta_\Gamma^{1/2} \nabla( w - w_h)\|_{L^2(\Omega)} \leq C_1 |\ln h|^{1/2} h^{3/2}\|\cutoff e\|_{L^2(\Omega)}. \\
 \mbox{ If $k > 1$ and  $\varepsilon \in (0,s_0-1/2]$, then } &\quad 
 \|\widetilde\delta_\Gamma^{1/2+\varepsilon} \nabla (w - w_h)\|_{L^2(\Omega)} \leq C_2 h^{3/2+\varepsilon}
 \|\cutoff e\|_{L^2(\Omega)}.
\end{align*}
The constant $C_1$ depends on the same quantities as the 
constant $C_1$ of Lemma~\ref{lemma:weighted-w-wh} and $\|\opA\|_{C^{0,1}(\overline{\Omega})}$; the constant 
$C_2 > 0$ depends on the same quantities as the constant $C_2$ 
in Lemma~\ref{lemma:weighted-w-wh} and additionally on $\|\opA\|_{C^{1,1}(\overline{\Omega})}$.

\item 
\label{item:lemma:w-wh-L1-ii}
For any fixed $D^\prime > 0$, we have 
$$
\|\nabla (w - w_h)\|_{L^2(\Omega \setminus S_{D^\prime})} 
\leq C_3 
\|\cutoff e\|_{L^2(\Omega)}
\begin{cases} 
h^{3/2} & \mbox{ if $k = 1$} \\
h^{1+s_0} & \mbox{ if $k > 1$.} 
\end{cases}
$$

The constant $C_3 > 0$ depends on $D^\prime$ and on 
the same quantities as the constants $C_1$, $C_2$ in (\ref{item:lemma:w-wh-L1-i}) for the cases 
$ k =1 $ and $k > 1$, respectively. 

\end{enumerate}
\end{lemma}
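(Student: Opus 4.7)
The plan is to combine a weighted local Galerkin error analysis in the spirit of \cite[Sec.~5.3]{wahlbin95} with the weighted $L^2$-bound of Lemma~\ref{lemma:weighted-w-wh} and the interior regularity of $w$ supplied by Lemma~\ref{lemma:5.4}. The key observation is that $(w,\lambda)=T^\dual(\cutoff e)$ has a right-hand side concentrated in the thin strip $S_h$, so $w$ is considerably more regular on $\Omega\setminus S_h$ than globally; the weight $\widetilde\delta_\Gamma^{1/2+\varepsilon}$ quantifies this.

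For part (\ref{item:lemma:w-wh-L1-i}), I would decompose $\Omega$ into dyadic annuli
\begin{align*}
A_j := \{x \in \Omega\,:\, c\,2^{j-1}h \le \delta_\Gamma(x) \le c\,2^j h\}, \qquad j=0,1,\ldots,J, \quad 2^J h \sim 1,
\end{align*}
plus the innermost layer $S_{ch}$, and apply on each $A_j$ a local Nitsche--Schatz interior estimate of the form
\begin{align*}
\|\nabla(w-w_h)\|_{L^2(A_j)} \le C \inf_{v_h \in V_h}\|\nabla(w-v_h)\|_{L^2(A_j')} + C(2^j h)^{-1}\|w-w_h\|_{L^2(A_j')}
\end{align*}
on a fattened annulus $A_j' \supset A_j$. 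I would control the best-approximation term via the nodal interpolant $\Ihk w$ from Lemma~\ref{lemma:approximation-properties-Vh-Mh} combined with Lemma~\ref{lemma:5.4}(\ref{item:lemma:5.4-1}) for $k=1$ (using the regularity $w \in B^{3/2}_{2,\infty}(\Omega)$ from Lemma~\ref{lemma:B32-regularity}), and an iterated application of Lemma~\ref{lemma:5.4}(\ref{item:lemma:5.4-2}) (requiring $\opA \in C^{1,1}$) for $k>1$. Multiplying by $\widetilde\delta_\Gamma^{1/2+\varepsilon} \sim (2^j h)^{1/2+\varepsilon}$ on $A_j$, squaring, and summing in $j$ produces a weighted $L^2$-norm of $\nabla^{k+1}w$ of the claimed order. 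The lower-order contribution $(2^j h)^{-1}\|w-w_h\|_{L^2(A_j')}$, once weighted and summed, is absorbed into $\|\widetilde\delta_\Gamma^{-1/2+\varepsilon}(w-w_h)\|_{L^2(\Omega)}$ and estimated by Lemma~\ref{lemma:weighted-w-wh}. The logarithmic factor in the $k=1$ case appears precisely because at the endpoint $\varepsilon=0$ the geometric series over the $O(\log(1/h))$ dyadic scales becomes harmonic; for $\varepsilon>0$ it converges without a log.

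For part (\ref{item:lemma:w-wh-L1-ii}) and $k>1$, I would deduce the claim directly from part (\ref{item:lemma:w-wh-L1-i}) with $\varepsilon = s_0-1/2$: on $\Omega\setminus S_{D'}$ we have $\widetilde\delta_\Gamma \gtrsim D'$, and $3/2+\varepsilon = 1+s_0$. For $k=1$ the logarithm must be avoided, so I would instead apply a standard interior Nitsche--Schatz estimate on the fixed pair $\Omega\setminus S_{D'} \subset \Omega\setminus S_{D'/2}$. For $h$ sufficiently small, $\cutoff e$ vanishes on $\Omega\setminus S_{D'/2}$, so Lemma~\ref{lemma:5.4}(\ref{item:lemma:5.4-4}) yields $\|\nabla^2 w\|_{L^2(\Omega\setminus S_{D'/2})} \lesssim \|\cutoff e\|_{L^2(\Omega)}$, and piecewise linear interpolation delivers the best-approximation error at rate $O(h)$; the missing $h^{1/2}$ is then supplied by a local Aubin--Nitsche duality step on the interior subdomain, where no logarithmic factor appears because one is away from the singular support of the data.

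The main obstacle I expect is making the dyadic localization in part (\ref{item:lemma:w-wh-L1-i}) rigorous with constants uniform in $j$: the Wahlbin/Nitsche--Schatz local estimate must be applied with ``interior distance'' between $A_j$ and $\partial A_j'$ proportional to the annulus width $2^j h$, which requires a scaling/translation argument so that constants do not deteriorate across the $O(\log(1/h))$ dyadic levels. A related subsidiary point is the coefficient regularity: iterating Lemma~\ref{lemma:5.4}(\ref{item:lemma:5.4-2}) for $k>1$ requires $\opA\in C^{1,1}$, matching the hypotheses of Theorem~\ref{thm:approximation-of-varphi}.
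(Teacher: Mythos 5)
Your plan for part (\ref{item:lemma:w-wh-L1-i}) is essentially the paper's argument. The paper also splits off the strip $S_{ch}$ (handled directly by Lemma~\ref{lemma:a-priori-dual}) and on the complement uses a Wahlbin-type local estimate plus a covering argument to arrive at
$\|\widetilde\delta_\Gamma^{1/2+\varepsilon}\nabla(w-w_h)\|_{L^2(\Omega\setminus S_{ch})}
\lesssim \|\widetilde\delta_\Gamma^{1/2+\varepsilon}\nabla(w-\Ihk w)\|_{L^2}
+\|\widetilde\delta_\Gamma^{-1/2+\varepsilon}(w-w_h)\|_{L^2}$,
then invokes Lemma~\ref{lemma:weighted-w-wh} for the low-order term and Lemma~\ref{lemma:5.4} together with~\eqref{eq:regularity-of-w} for the best-approximation term; your dyadic annuli $A_j$ are precisely that covering made explicit, and the finite overlap / uniform constants you flag as a concern are handled there in the standard way. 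Your attribution of the $|\ln h|^{1/2}$ at $\varepsilon=0$ to a harmonic sum over dyadic scales is in the right spirit, although in the paper the logarithm is already packaged inside Lemma~\ref{lemma:weighted-w-wh}(\ref{item:lemma:weighted-w-wh-1}... actually the first estimate) and Lemma~\ref{lemma:5.4}(\ref{item:lemma:5.4-1}), which encode that same dyadic degeneration.

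For part (\ref{item:lemma:w-wh-L1-ii}), your shortcut for $k>1$ is correct and in fact cleaner than the paper's: with $\varepsilon=s_0-1/2$, part (\ref{item:lemma:w-wh-L1-i}) gives $\|\widetilde\delta_\Gamma^{1/2+\varepsilon}\nabla(w-w_h)\|_{L^2(\Omega)}\lesssim h^{1+s_0}\|\cutoff e\|_{L^2(\Omega)}$, and dividing by $\widetilde\delta_\Gamma^{1/2+\varepsilon}\gtrsim (D')^{1/2+\varepsilon}$ yields exactly the stated bound. (The paper runs a separate interior argument, where the $h^{5/2}$ approximation term is dominated by the $h^{1+s_0}$ low-order term.) You also correctly note that this shortcut fails for $k=1$ because it imports the $|\ln h|^{1/2}$.

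Your route for $k=1$ in part (\ref{item:lemma:w-wh-L1-ii}), however, has a genuine gap. You invoke Lemma~\ref{lemma:5.4}(\ref{item:lemma:5.4-4}), which gives $\|\nabla^2 w\|_{L^2(\Omega\setminus S_{ch})}\lesssim\|\cutoff e\|_{L^2(\Omega)}$ \emph{without} the $h^{1/2}$ gain (that lemma is calibrated to stay uniform as one comes within $O(h)$ of $\Gamma$), so the best-approximation term becomes $O(h)\,\|\cutoff e\|_{L^2(\Omega)}$ rather than the needed $O(h^{3/2})$. You then propose to recover the missing $h^{1/2}$ by a ``local Aubin--Nitsche duality step on the interior subdomain,'' but local duality of Aubin--Nitsche type improves the $L^2$-error of $w-w_h$, not its $H^1$-error or the best-approximation term, so it cannot supply the missing power of $h$ in the quantity $\|\nabla(w-w_h)\|_{L^2(\Omega\setminus S_{D'})}$. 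The correct (and simpler) ingredient is already available: since $\operatorname{supp}(\cutoff e)\subset S_h$, estimate~\eqref{eq:regularity-of-w-1} gives $\|w\|_{B^{3/2}_{2,\infty}(\Omega)}\lesssim h^{1/2}\|\cutoff e\|_{L^2(\Omega)}$, hence $\|w\|_{H^1(\Omega)}\lesssim h^{1/2}\|\cutoff e\|_{L^2(\Omega)}$; interior elliptic regularity on the \emph{fixed} subdomain $\Omega\setminus S_{D''}$ (where $-\nabla\cdot(\opA\nabla w)=0$ for $h$ small) then gives $\|w\|_{H^2(\Omega\setminus S_{D''})}\lesssim\|w\|_{H^1(\Omega)}\lesssim h^{1/2}\|\cutoff e\|_{L^2(\Omega)}$, so the interior best-approximation term is already $O(h\cdot h^{1/2})=O(h^{3/2})$, and the low-order term is bounded by Lemma~\ref{lemma:weighted-w-wh}(\ref{item:lemma:weighted-w-wh-2}... the second estimate) with $\varepsilon=s_0-1/2$, giving $O(h^{1+s_0})\le O(h^{3/2})$. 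In short: the $h^{1/2}$ you are looking for is already present in the regularity of $w$, not something to be mined from a discrete duality step.
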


\begin{proof} 
{\em Proof of \eqref{item:lemma:w-wh-L1-i}:} The norm 
$\|\widetilde\delta_\Gamma^{1/2+\varepsilon} \nabla (w - w_h)\|_{L^2(\Omega)}$ 
is decomposed as 
\begin{equation}
\label{eq:lemma:w-wh-L1-7} 
\|\widetilde\delta_\Gamma^{1/2+\varepsilon} \nabla (w - w_h)\|_{L^2(\Omega)} \leq 
\|\widetilde \delta_\Gamma^{1/2+\varepsilon} \nabla(w - w_h)\|_{L^2(S_{ch})} + 
\|\widetilde\delta_\Gamma^{1/2+\varepsilon} \nabla(w - w_h)\|_{L^2(\Omega\setminus S_{ch})}
\end{equation}
for some fixed $c > 0$ (determined below in dependence on 
$\Omega$ and the shape regularity of the triangulation ${\mathcal T}_\Omega$) 
and each of these two contributions is estimated separately. 
We start with the simpler, first one: 
\begin{align}
\|\widetilde \delta_\Gamma^{1/2+\varepsilon} \nabla (w - w_h)\|_{L^2(S_{ch})} 
&\leq  (ch+h)^{1/2+\varepsilon} \|\nabla (w - w_h)\|_{L^2(S_{ch})} 
\lesssim h^{1/2+\varepsilon} \|\nabla(w - w_h)\|_{L^2(\Omega)}
\label{eq:lemma:w-wh-L1-100}
 \stackrel{Lem.~\ref{lemma:a-priori-dual}}{\lesssim} 
 h^{3/2+\varepsilon} \|\cutoff e\|_{L^2(\Omega)}. 
\end{align}
The second term, $\|\widetilde\delta_\Gamma^{1/2+\varepsilon}\nabla (w - w_h)\|_{L^1(\Omega\setminus S_{ch})}$ 
requires tools from the local error
analysis in FEM. The Galerkin orthogonality 
$$
a(w - w_h,v) = 0 \qquad \forall v \in V_h \cap H^1_0(\Omega)
$$
allows us to use the techniques of the local error analysis of FEM as described in \cite[Sec.~{5.3}]{wahlbin95}. 
This leads to the following estimate for arbitrary balls $B_r \subset B_{r^\prime}$ with the same 
center (implicitly, $r^\prime > r + O(h)$) 
\begin{equation}
\label{eq:wahlbin-1}
\|\nabla( w - w_h)\|_{L^2(B_r)} \lesssim \|\nabla (w - \Ihk w)\|_{L^2(B_{r^\prime})} + 
\frac{1}{r^\prime - r}\|w - w_h\|_{L^2(B_{r^\prime})}, 
\end{equation}
where $\Ihk w$ is a local approximant such as the one of Lemma~\ref{lemma:approximation-properties-Vh-Mh}.
By a covering argument, these local estimates can be combined into a global estimate of the following form, 
where for sufficiently small $c_1 \in (0,1) $ (depending only on $\Omega$ and the shape regularity of 
${\mathcal T}_\Omega$)
\begin{equation}
\label{eq:wahlbin-10}
\|\widetilde \delta_\Gamma^{1/2+\varepsilon} \nabla (w - w_h)\|_{L^2(\Omega\setminus S_{c h})}
\lesssim \|\widetilde \delta_\Gamma^{1/2+\varepsilon} \nabla (w - \Ihk w)\|_{L^2(\Omega\setminus S_{c_1 c h})} 
+ \|\widetilde \delta_\Gamma^{-1/2+\varepsilon} (w - w_h)\|_{L^2(\Omega\setminus S_{c_1 c h})}. 
\end{equation}
An implicit assumption is that $c_1 c h > 2h$. 
We emphasize that $\varepsilon  = 0$ is admissible in \eqref{eq:wahlbin-10}. 

We consider the cases $k = 1$ and $k > 1$ separately. 

For $k > 1$, we assume $\varepsilon \in (0, s_0-1/2]$ 
in \eqref{eq:wahlbin-10}. Employing in \eqref{eq:wahlbin-10} the bound 
\eqref{eq:lemma:weighted-w-wh-2} for the term
$\|\widetilde\delta_\Gamma^{-1/2+\varepsilon} (w - w_h)\|_{L^2(\Omega\setminus S_{c_1 c h})}$
and the fact that $k > 1$ together with the approximation properties of $\Ihk$,
we get for suitable $c_2 \in (0,1)$ (again depending only on $\Omega$ and the shape regularity of 
${\mathcal T}_\Omega$)
\begin{align}
\nonumber 
\|\widetilde\delta_\Gamma^{1/2+\varepsilon}\nabla (w - w_h)\|_{L^2(\Omega\setminus S_{c h})} 
&\stackrel{\eqref{eq:lemma:weighted-w-wh-2}}{\lesssim}
\|\widetilde\delta_\Gamma^{1/2+\varepsilon}\nabla (w - \Ihk w)\|_{L^2(\Omega\setminus S_{c_1 c h})} 
+ h^{3/2+\varepsilon} \|\cutoff e\|_{L^2(\Omega)}  \\
\label{eq:lemma:w-wh-L1-9}
&\lesssim h^2 \|\delta_\Gamma^{1/2+\varepsilon} \nabla^3 w\|_{L^2(\Omega\setminus S_{c_2 c_1 c h})} 
+ h^{3/2+\varepsilon} \|\cutoff e\|_{L^2(\Omega)}.
\end{align}
In this estimate, we have implicitly assumed that $c > 0$ is sufficiently large so that  
$c_2 c_1 c h > 2 h$. 
Combining Lemma~\ref{lemma:5.4}, \eqref{item:lemma:5.4-2}, \eqref{item:lemma:5.4-3}, and the 
regularity assertion \eqref{eq:regularity-of-w-2} allows us to conclude with yet another 
constant $c_3 \in (0,1)$ (depending on $\Omega$ and the shape regularity of ${\mathcal T}_\Omega$)
\begin{eqnarray}
\nonumber 
\|\delta_\Gamma^{1/2+\varepsilon} \nabla^3 w\|_{L^2(\Omega\setminus S_{c_2 c_1 c h})} 
& \stackrel{\text{Lem.~\ref{lemma:5.4}~\eqref{item:lemma:5.4-2}}}{\lesssim}& 
\|\delta_\Gamma^{-1/2+\varepsilon} \nabla^2 w\|_{L^2(\Omega\setminus S_{c_3 c_2 c_1 c h})} 
+ \|\delta_\Gamma^{1/2+\varepsilon} \nabla w\|_{L^2(\Omega\setminus S_{c_3 c_2 c_1 c h})} 
\\
&{\lesssim}&
h^{-1/2+\varepsilon} \|\nabla^2 w\|_{L^2(\Omega\setminus S_{c_3 c_2 c_1 c h})}
+ \|\nabla w\|_{L^2(\Omega\setminus S_{c_3 c_2 c_1 c h})}
\\
 &\stackrel{\text{Lem.~\ref{lemma:5.4}~\eqref{item:lemma:5.4-3}}}
\lesssim& h^{-1+2 \varepsilon} \|w\|_{H^{3/2+\varepsilon}(\Omega)}  \\
\label{eq:wahlbin-100}
&\stackrel{\eqref{eq:regularity-of-w-2}}\lesssim& h^{-1/2+\varepsilon} \|\cutoff e\|_{L^2(\Omega)}.
\end{eqnarray}
Again, the implicit assumption on $c$ is that $c_3 c_2 c_1 c  > \widetilde c$ with 
$\widetilde c$ given by Lemma~\ref{lemma:5.4}.
The final condition on $c$ therefore is $c > \max\{\widetilde c/(c_1 c_2 c_3),2/(c_1 c_2)\}$. 
The above estimates, namely, the combination of 
\eqref{eq:lemma:w-wh-L1-7}, 
\eqref{eq:lemma:w-wh-L1-100}, 
\eqref{eq:lemma:w-wh-L1-9}, 
\eqref{eq:wahlbin-100}
shows
for $k > 1$ and $\varepsilon \in (0,s_0-1/2]$ that 
$\|\widetilde\delta_\Gamma^{1/2+\varepsilon} \nabla (w - w_h)\|_{L^2(\Omega)} 
\lesssim h^{3/2+\varepsilon} \|\cutoff e\|_{L^2(\Omega)}$, which is the claimed estimate. 

The case $k = 1$ corresponds to the limiting situation $\varepsilon = 0$ 
in \eqref{eq:wahlbin-10}.
The procedure is 
analogous to that for the case $k>1$ except that we use 
$\|\delta_\Gamma^{1/2} \nabla (w - \Ihk w)\|_{L^2(\Omega\setminus S_{c_1 c h})}
\lesssim h \|\delta_\Gamma^{1/2} \nabla^2 w\|_{L^2(\Omega\setminus S_{c_2 c_1 c h})}$ 
and then  Lemma~\ref{lemma:5.4}, \eqref{item:lemma:5.4-1} in conjunction with 
\eqref{eq:regularity-of-w-1}. 
The contribution $\|\delta_\Gamma^{-1/2} (w - w_h)\|_{L^2(\Omega\setminus S_{c_1 c h})}$
is controlled with the aid of \eqref{eq:lemma:weighted-w-wh-1} of Lemma~\ref{lemma:weighted-w-wh}. 

{\em Proof of \eqref{item:lemma:w-wh-L1-ii}:} 
This follows more easily from \eqref{eq:wahlbin-1}. Since $D^\prime > 0$ is fixed, 
\eqref{eq:wahlbin-1} leads by a covering argument to 
\begin{equation}
\label{eq:lemma:w-wh-L1-500}
\|\nabla (w - w_h)\|_{L^2(\Omega\setminus S_{D^\prime}) }
\lesssim \|\nabla( w - \Ihk w)\|_{L^2(\Omega\setminus S_{D^{\prime\prime}})} 
+ \|\delta_\Gamma^{-1/2} (w - w_h)\|_{L^2(\Omega\setminus S_{D^{\prime\prime}})}
\end{equation}
for some $D^{\prime\prime} > 0$. Standard approximation properties of nodal interpolation
(cf.~Lemma~\ref{lemma:approximation-properties-Vh-Mh}) gives 
\begin{equation}
\label{eq:lemma:w-wh-L1-1000}
\|\nabla (w - \Ihk w)\|_{L^2(\Omega\setminus S_{D^{\prime\prime}})}
\lesssim 
\begin{cases} h^1 \|w\|_{H^2(\Omega \setminus S_{D^{\prime\prime\prime}})} & \mbox{ if $k = 1$} \\
              h^2 \|w\|_{H^3(\Omega \setminus S_{D^{\prime\prime\prime}})} & \mbox{ if $k > 1$.} 
\end{cases} 
\end{equation}
Interior regularity (note: $-\nabla \cdot (\opA \nabla w) = 0$ on $\Omega\setminus S_h$)
allows us to estimate 
\begin{equation}
\label{eq:lemma:w-wh-L1-2000}
\|w\|_{H^2(\Omega \setminus S_{D^{\prime\prime\prime}})} \lesssim \|w\|_{H^1(\Omega)} 
\lesssim h^{1/2} \|\cutoff e\|_{L^2(\Omega)} 
\qquad \mbox{ and } \qquad 
\|w\|_{H^3(\Omega \setminus S_{D^{\prime\prime\prime}})} \lesssim \|w\|_{H^1(\Omega)} 
\lesssim h^{1/2} \|\cutoff e\|_{L^2(\Omega)},
\end{equation}
where we employed~\eqref{eq:regularity-of-w}.
The term $\|\delta_\Gamma^{-1/2} (w - w_h)\|_{L^2(\Omega\setminus S_{D^{\prime\prime}})}$ 
is bounded by $h^{1+s_0} \|\cutoff e\|_{L^2(\Omega)}$ by \eqref{eq:lemma:weighted-w-wh-2} of 
Lemma~\ref{lemma:weighted-w-wh}. Inserting this and (\ref{eq:lemma:w-wh-L1-1000}) in
(\ref{eq:lemma:w-wh-L1-500}) yields the result. 
\end{proof}

\subsection{Proof of estimate~\eqref{eq:thm2} of Theorem~\ref{thm:approximation-of-varphi}.}
\label{sec:proof-of-thm:error-on-strip}

\begin{numberedproof}{Theorem~\protect{\ref{thm:approximation-of-varphi}, estimate~(\ref{eq:thm2})}} 
Theorem~\ref{thm:approximation-of-varphi} disregards variational crimes. 
Therefore, Lemma~\ref{lemma:error-on-strip} applies with 
$\varepsilon_1 \equiv 0$ and $\varepsilon_2 \equiv 0$. 
Together with \eqref{eq:estimate-most-terms-of-lemma:a-priori-dual}--\eqref{eq:estimate-part-of-tilde-a-of-lemma:a-priori-dual},
Lemma~\ref{lemma:error-on-strip} yields for 
arbitrary $\IV u \in V_h$, $\IM \varphi \in M_h$: 
\begin{eqnarray}
\label{eq:proof-of-thm:error-on-strip-100}
\|\cutoff e\|^2_{L^2(\Omega)} 
\lesssim 
h \|\cutoff e \|_{L^2(\Omega)} 
\Big[ 
\|u - \IV u\|_{H^{1/2}(\Gamma)} + \|\varphi - \IM \varphi\|_{H^{-1/2}(\Gamma)}
\Big] + 
\left| a(u - \IV u,w - w_h)\right|. 
\end{eqnarray}
Lemma~\ref{lemma:w-wh-L1}, (i) provides  
\begin{align*}
\|\widetilde\delta_\Gamma^{1/2+\varepsilon} \nabla(w - w_h)\|_{L^2(S_D)} 
& \lesssim h^{3/2} \|\cutoff e\|_{L^2(\Omega)}
\begin{cases}
|\ln h|^{1/2}  & \mbox{ if $\varepsilon = 0$} \\
h^\varepsilon & \mbox{ if $\varepsilon \in (0,s_0-1/2]$ and $k > 1$}
\end{cases}
\end{align*}
With~\eqref{eq:cor:randnaehe-10}, we hence get for the case $k > 1$
together with Lemma~\ref{lemma:w-wh-L1}, \eqref{item:lemma:w-wh-L1-ii}
\begin{eqnarray}
\nonumber 
\left| a(u - \IV u,w - w_h)\right| 
&\stackrel{\eqref{eq:cor:randnaehe-10}}\lesssim& 
\|\widetilde \delta_\Gamma^{-1/2-\varepsilon} \nabla (u - \IV u)\|_{L^2(S_D)} 
\|\widetilde \delta_\Gamma^{1/2+\varepsilon} \nabla (w - w_h)\|_{L^2(S_D)}  
+ 
\|\nabla (u - \IV u) \|_{L^2(\Omega\setminus S_D)} 
\|\nabla (w - w_h) \|_{L^2(\Omega\setminus S_D)}  \\
& \stackrel{\text{Lem.~\ref{lemma:w-wh-L1}}}\lesssim &
\Big[
\widetilde \delta_\Gamma^{-1/2-\varepsilon} \nabla (u - \IV u)\|_{L^2(S_D)} \,
h^{3/2+\varepsilon}
+ \|\nabla (u - \IV u) \|_{L^2(\Omega\setminus S_D)} \,
h^{1+s_0}\Big]\,\|\cutoff e\|_{L^2(\Omega)}.
\label{eq:proof-of-thm:error-on-strip-199}
\end{eqnarray}
We choose $\IV u = \Ihk u$.
{}From Lemma~\ref{lemma:approximation-properties-Vh-Mh}, we get 
\begin{align*}
\|\widetilde\delta_\Gamma^{-1/2-\varepsilon} \nabla (u - \Ihk u)\|_{L^2(S_D)} 
&\lesssim h^k \|\nabla^{k+1} u\|_{B^{1/2}_{2,1}(\Omega)} 
\begin{cases} 
|\ln h|^{1/2} & \mbox{ if $\varepsilon = 0$}, \\
h^{-\varepsilon} & \mbox{ if $\varepsilon > 0$}.  
\end{cases}
\end{align*}
With this and standard approximation properties of $\Ihk$, we obtain
\begin{eqnarray}
\left| a(u - \Ihk u,w - w_h)\right| 
\nonumber 
&\stackrel{\text{Lem.~\ref{lemma:approximation-properties-Vh-Mh}}}\lesssim &
\Big[ h^{k-\varepsilon} \|\nabla^{k+1} u\|_{B^{1/2}_{2,1}(\Omega)} h^{3/2+\varepsilon} + 
h^k \|\nabla^{k+1} u\|_{L^2(\Omega)} h^{1+s_0}
\Big] \|\cutoff e\|_{L^2(\Omega)} \\
\label{eq:proof-of-thm:error-on-strip-200}
&\lesssim& h^{k+3/2} \|u\|_{B^{k+3/2}_{2,1}(\Omega)} \|\cutoff e\|_{L^2(\Omega)}, 
\end{eqnarray}
where, in the last step, we employed the continuous embedding $B^{k+3/2}_{2,1}(\Omega) \subset H^{k+1}(\Omega)$
and the assumption $s_0 > 1/2$. 

For the lowest order case $k = 1$, the estimate corresponding to 
(\ref{eq:proof-of-thm:error-on-strip-200}) is 
\begin{equation}
\label{eq:proof-of-thm:error-on-strip-250}
\left| a(u - \Ihk u,w - w_h)\right| \lesssim h^{k+3/2} |\ln h| \|u\|_{B^{k+3/2}_{2,1}(\Omega)} \|\cutoff e\|_{L^2(\Omega)}.
\end{equation}
Therefore, the cases $k=1$ and $k > 1$ can be combined into the bound 
\begin{equation}
\label{eq:proof-of-thm:error-on-strip-252}
\left| a(u - \Ihk u,w - w_h)\right| \lesssim 
h^{k+3/2} (1 + \delta_{k,1} |\ln h|) \|u\|_{B^{k+3/2}_{2,1}(\Omega)} \|\cutoff e\|_{L^2(\Omega)}.
\end{equation}
The estimate (\ref{eq:foo-4}) yields 
$\|u - \Ihk u\|_{H^{1/2}(\Gamma)} \lesssim h^{k+1/2} \|u\|_{B^{k+3/2}_{2,1}(\Omega)}$. 
A suitable choice of $\IM \varphi$ provides $\|\varphi - \IM \varphi\|_{H^{-1/2}(\Gamma)} 
\lesssim h^{k+3/2} \|\varphi\|_{H^k_{pw}(\Gamma)}$. Inserting these two bounds 
and (\ref{eq:proof-of-thm:error-on-strip-252}) in 
\eqref{eq:proof-of-thm:error-on-strip-100} finally produces 
\begin{eqnarray}
\label{eq:proof-of-thm:error-on-strip-10}
\|\cutoff e\|_{L^2(\Omega)} 
\lesssim h^{k+3/2} 
 ( 1 + \delta_{k,1} |\ln h|)\|u\|_{B^{k+3/2}_{2,1}(\Omega)} + 
h^{k+3/2} \|\varphi\|_{H^{k}_{pw}(\Gamma)}. 
\end{eqnarray}
This concludes the proof.
\end{numberedproof}

\subsection{Extensions.}
\label{sec:randnaehe}

Estimate~\eqref{eq:proof-of-thm:error-on-strip-199} in
the proof of Theorem~\ref{thm:approximation-of-varphi} shows that we have actually
obtained the following result: 

\begin{corollary}[best approximation property]
\label{cor:best-approximation}
Assume the hypotheses of Theorem~\ref{thm:approximation-of-varphi}. 
Then, for arbitrary $\IV u \in V_h$ and $\IM \varphi \in M_h$ there holds: 
\begin{enumerate}[(i)]
\item 
\label{item:cor:best-approximation-i}
If $ k = 1$ then  
\begin{eqnarray*}
\|u - u_h\|_{L^2(S_h)} &\lesssim & 
h^{3/2} |\ln h|^{1/2} \|\widetilde \delta_\Gamma^{-1/2} \nabla (u - \IV u)\|_{L^2(\Omega)} 
+ h \|u - \IV u\|_{H^{1/2}(\Gamma)} + h\|\varphi - \IM \varphi\|_{H^{-1/2}(\Gamma)}. 
\end{eqnarray*}
\item 
\label{item:cor:best-approximation-ii}
If $k > 1$ then for arbitrary $\varepsilon \in (0,s_0-1/2]$ and $D > 0$
\begin{eqnarray*}
\|u - u_h\|_{L^2(S_h)} &\lesssim & 
h^{3/2+\varepsilon} \|\widetilde \delta_\Gamma^{-1/2-\varepsilon} \nabla (u - \IV u)\|_{L^2(S_D)} 
+ h^{1+s_0} \|\nabla(u - \IV u)\|_{L^2(\Omega)} 
+ h \|u - \IV u\|_{H^{1/2}(\Gamma)} + h\|\varphi - \IM \varphi\|_{H^{-1/2}(\Gamma)}.
\end{eqnarray*}
\end{enumerate}
The constant $C$ depends on the same quantities as in Theorem~\ref{thm:approximation-of-varphi} and 
additionally on $D$ and $\varepsilon$ in 
(\ref{item:cor:best-approximation-ii}). \hfill{\small$\square$}
\end{corollary}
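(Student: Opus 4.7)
The plan is to re-run the argument that established estimate~\eqref{eq:thm2} of Theorem~\ref{thm:approximation-of-varphi}, but to \emph{refrain} from specializing $\IV u$ and $\IM \varphi$ to the nodal interpolant $\Ihk u$ and a concrete best approximation of $\varphi$. All the ingredients are already in place; Corollary~\ref{cor:best-approximation} is essentially an observation about what the argument delivers at an intermediate stage.

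First, I would start from the identity of Lemma~\ref{lemma:error-on-strip}, which already holds for arbitrary $(\IV u, \IM \varphi) \in X_h$, with $\varepsilon_1 \equiv 0 \equiv \varepsilon_2$ in the present setting. I would then bound the three terms involving $\lambda - \lambda_h$ and $w - w_h$ that do not contain $a(\cdot,\cdot)$ via Lemma~\ref{lemma:a-priori-dual}, exactly as in \eqref{eq:estimate-most-terms-of-lemma:a-priori-dual}--\eqref{eq:estimate-part-of-tilde-a-of-lemma:a-priori-dual}, reproducing the intermediate bound \eqref{eq:proof-of-thm:error-on-strip-100}:
\begin{equation*}
\|\cutoff e\|^2_{L^2(\Omega)} \lesssim h\|\cutoff e\|_{L^2(\Omega)}\bigl[\|u - \IV u\|_{H^{1/2}(\Gamma)} + \|\varphi - \IM \varphi\|_{H^{-1/2}(\Gamma)}\bigr] + |a(u - \IV u, w - w_h)|.
\end{equation*}

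Second, the only remaining task is to estimate $|a(u - \IV u, w - w_h)|$ without invoking any approximation properties of $u$ on the primal side. For part~\eqref{item:cor:best-approximation-ii} ($k > 1$), I would use the weighted splitting \eqref{eq:cor:randnaehe-10} over $S_D$ and $\Omega\setminus S_D$: the weighted interior factor is bounded by Lemma~\ref{lemma:w-wh-L1}\eqref{item:lemma:w-wh-L1-i} in the $k > 1$ case, contributing $h^{3/2+\varepsilon}\|\cutoff e\|_{L^2(\Omega)}$, and the exterior factor by Lemma~\ref{lemma:w-wh-L1}\eqref{item:lemma:w-wh-L1-ii}, contributing $h^{1+s_0}\|\cutoff e\|_{L^2(\Omega)}$. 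For part~\eqref{item:cor:best-approximation-i} ($k = 1$), no splitting is needed: a single weighted Cauchy--Schwarz estimate
\begin{equation*}
|a(u - \IV u, w - w_h)| \lesssim \|\widetilde\delta_\Gamma^{-1/2}\nabla(u-\IV u)\|_{L^2(\Omega)}\,\|\widetilde\delta_\Gamma^{1/2}\nabla(w - w_h)\|_{L^2(\Omega)}
\end{equation*}
combined with Lemma~\ref{lemma:w-wh-L1}\eqref{item:lemma:w-wh-L1-i} in the $k=1$ case yields the factor $|\ln h|^{1/2} h^{3/2}\|\cutoff e\|_{L^2(\Omega)}$.

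Finally, inserting these bounds into the intermediate estimate and dividing out one power of $\|\cutoff e\|_{L^2(\Omega)}$ (equivalently, absorbing via Young's inequality $ab \le \tfrac12 a^2 + \tfrac12 b^2$), together with the identity $\|u - u_h\|_{L^2(S_h)} = \|\cutoff e\|_{L^2(\Omega)}$, yields both assertions of the corollary. There is no genuine obstacle: the machinery of Sections~\ref{section:blubber}--\ref{subsection:dual} has done all the heavy lifting, and the Corollary simply records a best-approximation consequence obtained by not committing to specific choices of $\IV u, \IM \varphi$.
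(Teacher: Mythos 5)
Your proposal is correct and takes essentially the same route as the paper: the paper's proof of the Corollary is literally the remark that the intermediate estimates~\eqref{eq:proof-of-thm:error-on-strip-100} and~\eqref{eq:proof-of-thm:error-on-strip-199} already hold for arbitrary $(\IV u,\IM\varphi)\in X_h$, which you re-derive and then close in the stated way. You also correctly spell out the $k=1$ case (single weighted Cauchy--Schwarz over all of $\Omega$ with Lemma~\ref{lemma:w-wh-L1}\,(\ref{item:lemma:w-wh-L1-i}), no $S_D$ split), which the paper leaves implicit.
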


The statement of 
Corollary~\ref{cor:best-approximation} (\ref{item:cor:best-approximation-ii}) for the case $k > 1$ suggests that 
the $B^{k+3/2}_{2,1}$-regularity of the solution is only required near the 
coupling boundary $\Gamma$, while away from $\Gamma$ a weaker estimate is sufficient. 
The following result is meant to illustrate this point; it 
does not lay claim on sharpness of the regularity requirements
(in fact, the presence of the small factor $h^{s_0-1/2}$ is a clear indication of 
a lack of sharpness):

\begin{corollary}[reduced regularity away from $\Gamma$]
\label{cor:randnaehe}
Assume the hypotheses of Theorem~\ref{thm:approximation-of-varphi}. Let $k \ge 2$. 
Let $u \in B^{k+3/2}_{2,1}(S_D) \cap H^{k+1}(\Omega)$ for some fixed $D> 0$. 
Let $\varphi \in H^k_{pw}(\Gamma)$. Then: 
\begin{eqnarray}
\label{eq:cor:randnaehe-1}
\|u - u_h\|_{L^2(S_h)} &\leq& C h^{3/2+k} \Big[ \|u\|_{B^{k+3/2}_{2,1}(S_D)} + 
h^{s_0-1/2} \|u\|_{H^{k+1}(\Omega)} + \|\varphi\|_{H^{k}_{pw}(\Gamma)}\Big], \\
\label{eq:cor:randnaehe-2}
\|\varphi - \varphi_h\|_{V} &\leq& 
C h^{1/2+k} \Big[ \|u\|_{B^{k+3/2}_{2,1}(S_D)} + 
h^{s_0-1/2} \|u\|_{H^{k+1}(\Omega)} + \|\varphi\|_{H^{k}_{pw}(\Gamma)}\Big].
\end{eqnarray}
The constant $C$ depends on the same quantities as in Theorem~\ref{thm:approximation-of-varphi} and 
additionally on $D$. 
\end{corollary}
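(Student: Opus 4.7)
The plan is to revisit the proof of estimate~\eqref{eq:thm2} and exploit the fact that all occurrences of the strong $B^{k+3/2}_{2,1}$-norm in that argument arise from quantities that are localized in a fixed neighbourhood of~$\Gamma$. The key localization is already visible in Corollary~\ref{cor:best-approximation}~\eqref{item:cor:best-approximation-ii}: for $k\ge 2$ and any $\varepsilon\in(0,s_0-1/2]$ and $D>0$, the bound on $\|u-u_h\|_{L^2(S_h)}$ splits into a weighted contribution on $S_D$, a plain $H^1$-contribution on all of $\Omega$, and two boundary contributions.

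First I will establish~\eqref{eq:cor:randnaehe-1}. I apply Corollary~\ref{cor:best-approximation}~\eqref{item:cor:best-approximation-ii} with $\IV u = \Ihk u$, a suitably chosen $\IM\varphi\in M_h$, and $\varepsilon = s_0-1/2>0$, using the fixed $D>0$ from the hypotheses. Lemma~\ref{lemma:approximation-properties-Vh-Mh}~\eqref{item:lemma:approximation-properties-Vh-Mh-2} bounds the weighted strip term by
$\|\widetilde\delta_\Gamma^{-1/2-\varepsilon}\nabla(u-\Ihk u)\|_{L^2(S_D)}\lesssim h^{k-\varepsilon}\|u\|_{B^{k+3/2}_{2,1}(S_{D'})}$
for some $D'$ slightly larger than $D$, so the weighted term contributes $h^{3/2+\varepsilon}\cdot h^{k-\varepsilon}\|u\|_{B^{k+3/2}_{2,1}(S_{D'})}=h^{k+3/2}\|u\|_{B^{k+3/2}_{2,1}(S_D)}$ (after absorbing $D'$ into $D$ by shrinking it upstream, which is harmless). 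Standard approximation gives $\|\nabla(u-\Ihk u)\|_{L^2(\Omega)}\lesssim h^k\|u\|_{H^{k+1}(\Omega)}$, so the interior term contributes $h^{1+s_0}\cdot h^k\|u\|_{H^{k+1}(\Omega)}=h^{k+3/2}\cdot h^{s_0-1/2}\|u\|_{H^{k+1}(\Omega)}$, which is precisely where the loss factor $h^{s_0-1/2}$ arises. The trace term $\|u-\Ihk u\|_{H^{1/2}(\Gamma)}$ depends only on the values of $u$ in a neighborhood of $\Gamma$, and a local version of~\eqref{eq:foo-4} yields $\|u-\Ihk u\|_{H^{1/2}(\Gamma)}\lesssim h^{k+1/2}\|u\|_{B^{k+3/2}_{2,1}(S_D)}$; the best-approximation term on~$\Gamma$ gives $\|\varphi-\IM\varphi\|_{H^{-1/2}(\Gamma)}\lesssim h^{k+1/2}\|\varphi\|_{H^{k}_{pw}(\Gamma)}$. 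Combining all four contributions produces~\eqref{eq:cor:randnaehe-1}.

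Next, \eqref{eq:cor:randnaehe-2} follows from exactly the same duality-plus-inverse-estimate argument used to deduce~\eqref{eq:thm1} from~\eqref{eq:thm2}. The only change is that the three ingredients $\|\varphi-\IM\varphi\|_V$, $\|u-\Ihk u\|_{H^{1/2}(\Gamma)}+h^{-1}\|u-\Ihk u\|_{L^2(S_h)}$, and $h^{-1}\|u-u_h\|_{L^2(S_h)}$ are now estimated with the localized versions above (for the first two; note that the local bound $\|u-\Ihk u\|_{L^2(S_h)}\lesssim h^{k+3/2}\|u\|_{B^{k+3/2}_{2,1}(S_D)}$ follows from~\eqref{eq:foo-2} combined with a local version of \cite[Lemma~{2.1}]{li-melenk-wohlmuth-zou10}) and with the newly proved~\eqref{eq:cor:randnaehe-1} for the last one. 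Collecting the terms yields the stated rate $h^{k+1/2}$ with the same three factors as on the right-hand side of~\eqref{eq:cor:randnaehe-1}.

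The only real subtlety, and the one that forces the restriction $k\ge 2$, is that this localization strategy requires a strictly positive $\varepsilon>0$ in Corollary~\ref{cor:best-approximation}~\eqref{item:cor:best-approximation-ii} so that the logarithmic case $\varepsilon=0$ does not appear; for $k=1$ one is forced into the weighted $B^{3/2}_{2,1}$-estimate of Lemma~\ref{lemma:w-wh-L1}~\eqref{item:lemma:w-wh-L1-i} with $\varepsilon=0$, which does not permit the same decoupling of strip and interior regularity. The remaining technical point is to verify that the localized versions of Lemma~\ref{lemma:approximation-properties-Vh-Mh}~\eqref{item:lemma:approximation-properties-Vh-Mh-2}, of the trace bound~\eqref{eq:foo-4}, and of the embedding $\|\nabla^{k+1}u\|_{L^2(S_{2h})}\lesssim h^{1/2}\|u\|_{B^{k+3/2}_{2,1}}$ all hold with the norm on the right-hand side taken over a fixed neighbourhood of $\Gamma$ inside~$S_D$; this is routine since all three proofs are purely local in character near~$\Gamma$.
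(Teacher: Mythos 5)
Your argument is correct and follows exactly the route the paper intends: apply Corollary~\ref{cor:best-approximation}~\eqref{item:cor:best-approximation-ii} with $\IV u = \Ihk u$ and a strictly positive $\varepsilon$, estimate the weighted strip term via Lemma~\ref{lemma:approximation-properties-Vh-Mh}~\eqref{item:lemma:approximation-properties-Vh-Mh-2} (ultimately Lemma~\ref{lemma:weighted-embedding}), estimate the interior term with the plain $H^{k+1}(\Omega)$-regularity to produce the factor $h^{s_0-1/2}$, and then derive \eqref{eq:cor:randnaehe-2} from \eqref{eq:cor:randnaehe-1} by the same inverse-estimate argument used to get \eqref{eq:thm1} from \eqref{eq:thm2}. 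You simply spell out the details that the paper's one-line proof leaves implicit, including the correct observation about why $k\ge 2$ is needed.
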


\begin{proof}
Follows from Corollary~\ref{cor:best-approximation} (\ref{item:cor:best-approximation-ii}), 
the assumption $k \ge 2$, the special choice $\IV u = \Ihk u$, and Lemma~\ref{lemma:weighted-embedding}. 
\end{proof}

\subsection{Variational crimes}
\label{sec:variational-crimes}
We recall that our proof of Theorem~\ref{thm:approximation-of-varphi} 
does not assess the impact of variational crimes, i.e., 
it assumes $\varepsilon_1 \equiv 0$ and $\varepsilon_2 \equiv 0$
in the Galerkin orthogonalities \eqref{eq:orthogonalities-primal}. As mentioned above, 
the terms $\varepsilon_1$, $\varepsilon_2$ were introduced 
in Lemma~\ref{lemma:error-on-strip} in order to be able to account for certain types of variational crimes, which 
is the topic of the present section.
We start with a standard result that incorporates the effect of approximating the data $u_0$ and $\phi_0$ 
in the approximation result of Theorem~\ref{thm:approximation-of-varphi} (\ref{item:thm:approximation-of-varphi-i}):
\begin{lemma}
\label{lemma:standard-variational-crimes}
Let $u_0 \in H^{k+1/2}_{pw}(\Gamma) \cap H^1(\Gamma)$ and $\phi_0 \in H^{k-1/2}_{pw}(\Gamma)$.
Let $(u,\varphi) \in X$ be the solution of \eqref{eq:rhs-weak}--\eqref{eq:weak-form}.
Let $(u_h,\varphi_h)\in X_h$ be the Galerkin solution of~\eqref{eq:weak-form-FEM}
with $u_0$ and $\phi_0$ in \eqref{eq:rhs-weak} replaced by approximations $\Pi^k_h u_0$ 
and $\Pi^{k-1}_h \phi_0$ that have the following approximation properties: 
\begin{align}
\label{eq:lemma:standard-variational-crimes-100}
\|u_0 - \Pi^k_h u_0\|_{H^{1/2}(\Gamma)} \leq C_{\text{apx}} h^{k} \|u_0\|_{H^{k+1/2}_{pw}(\Gamma)}  
&\qquad \mbox{ and } \qquad 
\|\phi_0 - \Pi^{k-1}_h \phi_0\|_{H^{-1/2}(\Gamma)}  \leq C_{\text{apx}} h^{k} \|\phi_0\|_{H^{k-1/2}_{pw}(\Gamma)}. 
\end{align}
Under the assumptions of Theorem~\ref{thm:approximation-of-varphi} (\ref{item:thm:approximation-of-varphi-i}) 
there holds 
\begin{equation*}
\|u - u_h\|_{H^1(\Omega)} + \|\varphi-  \varphi_h\|_{H^{-1/2}(\Gamma)}  
\leq C h^k \left[ \|u\|_{H^{k+1}(\Omega)} + \|\varphi\|_{H^{k-1/2}_{pw}(\Gamma)} 
+ \|u_0\|_{H^{k+1/2}_{pw}(\Gamma)} + \|\phi_0\|_{H^{k-1/2}_{pw}(\Gamma)} \right]. 
\end{equation*}
The constant depends on the constant $C$ of Theorem~\ref{thm:approximation-of-varphi} (\ref{item:thm:approximation-of-varphi-i})
as well as $C_{\text{apx}}$ of (\ref{eq:lemma:standard-variational-crimes-100}).
\end{lemma}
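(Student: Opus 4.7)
The plan is to apply a Strang-lemma-type argument: split the total error into the pure Galerkin error (with exact data) and the perturbation caused by replacing $u_0$, $\phi_0$ by their discrete approximations, and handle each part separately.

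First, let $(\widetilde u_h,\widetilde\varphi_h)\in X_h$ denote the Galerkin solution of~\eqref{eq:weak-form-FEM} with the \emph{exact} right-hand sides $L_1$, $L_2$ from~\eqref{eq:rhs-weak}. By Theorem~\ref{thm:approximation-of-varphi}~\eqref{item:thm:approximation-of-varphi-i}, this part contributes
\begin{align*}
\|u-\widetilde u_h\|_{H^1(\Omega)} + \|\varphi-\widetilde\varphi_h\|_{H^{-1/2}(\Gamma)}
\lesssim h^k\big[\|u\|_{H^{k+1}(\Omega)} + \|\varphi\|_{H^{k-1/2}_{pw}(\Gamma)}\big].
\end{align*}
So by the triangle inequality it suffices to control the discrete difference $(\widetilde u_h-u_h,\widetilde\varphi_h-\varphi_h)\in X_h$.

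Next, I would identify the perturbation functionals. Writing the perturbed right-hand sides with $u_0$, $\phi_0$ replaced by $\Pi^k_h u_0$, $\Pi^{k-1}_h\phi_0$ in \eqref{eq:rhs-weak-1}--\eqref{eq:rhs-weak-2}, subtraction yields that $(\widetilde u_h-u_h,\widetilde\varphi_h-\varphi_h)$ solves the discrete system~\eqref{eq:primal-block-system:h} with right-hand side
\begin{align*}
\varepsilon_1(v) &= \langle \phi_0-\Pi^{k-1}_h\phi_0,v\rangle_\Gamma + \langle \hyp(u_0-\Pi^k_h u_0),v\rangle_\Gamma,\\
\varepsilon_2(\psi) &= \langle \psi,(1/2-\dlo)(u_0-\Pi^k_h u_0)\rangle_\Gamma.
\end{align*}
By the \emph{uniform} discrete inf-sup stability from Lemma~\ref{lemma:solvability}~\eqref{lemma:solvability:item:vi},
\begin{align*}
\|\widetilde u_h-u_h\|_{H^1(\Omega)} + \|\widetilde\varphi_h-\varphi_h\|_V
\lesssim \sup_{(v,\psi)\in X_h\setminus\{0\}}\frac{|\varepsilon_1(v)|+|\varepsilon_2(\psi)|}{\|v\|_{H^1(\Omega)}+\|\psi\|_V}.
\end{align*}
The continuity of the trace $H^1(\Omega)\to H^{1/2}(\Gamma)$, of $\hyp\colon H^{1/2}(\Gamma)\to H^{-1/2}(\Gamma)$, and of $\dlo\colon H^{1/2}(\Gamma)\to H^{1/2}(\Gamma)$, together with the equivalence $\|\cdot\|_V\sim\|\cdot\|_{H^{-1/2}(\Gamma)}$, give
\begin{align*}
|\varepsilon_1(v)|+|\varepsilon_2(\psi)|
\lesssim \big[\|\phi_0-\Pi^{k-1}_h\phi_0\|_{H^{-1/2}(\Gamma)} + \|u_0-\Pi^k_h u_0\|_{H^{1/2}(\Gamma)}\big]
\big(\|v\|_{H^1(\Omega)}+\|\psi\|_V\big).
\end{align*}

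Finally, invoking the assumed approximation bounds~\eqref{eq:lemma:standard-variational-crimes-100} gives the desired $h^k\big[\|u_0\|_{H^{k+1/2}_{pw}(\Gamma)}+\|\phi_0\|_{H^{k-1/2}_{pw}(\Gamma)}\big]$ contribution, and combining with the Céa part yields the lemma. The only real subtlety is making sure the discrete inf-sup constant of Lemma~\ref{lemma:solvability}~\eqref{lemma:solvability:item:vi} applies to the space $X_h$ at hand (so that $X_h$ contains an admissible $(0,\xi)$); for the standard space of Assumption~\ref{assumption:X_h} this holds with $\xi\equiv 1\in M_h$, so no further work is required.
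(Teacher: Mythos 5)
Your proposal is correct and takes essentially the same route as the paper: the paper quotes a Strang-type estimate as ``standard convergence theory for Galerkin methods with variational crimes'' and then bounds the consistency terms via the mapping properties of $\hyp$ and $1/2-\dlo$ together with~\eqref{eq:lemma:standard-variational-crimes-100}, which is precisely what you do. You merely spell out the Strang argument (triangle inequality through the auxiliary Galerkin solution with exact data plus uniform discrete inf-sup stability for the discrete difference) rather than citing it, so there is no substantive difference.
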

\begin{proof}
The Galerkin orthogonality~\eqref{eq:orthogonalities-primal} for the primal problem now 
includes the linear functionals
\begin{subequations}
\label{eq:specific-errors} 
\begin{align} 
 \varepsilon_1(v) &:= \langle \phi_0 - \Pi^{k-1}_h \phi_0, v\rangle + \langle \hyp (u_0 - \Pi^k_h u_0),v\rangle,\\
 \varepsilon_2(\psi) &:= \langle (\frac{1}{2} - \dlo) (u_0 - \Pi^k_h u_0),\psi\rangle,
\end{align}
\end{subequations}
on the right-hand side. Hence, the standard convergence theory for Galerkin methods with variational crimes 
implies that the error bound (\ref{eq:standard-estimate-2}) is augmented by terms involving $\varepsilon_1$ and $\varepsilon_2$: 
\begin{equation}
\label{eq:standard-estimate-3} 
\|u - u_h\|_{H^1(\Omega)} + \|\varphi - \varphi_h\|_{H^{-1/2}(\Gamma)} 
\lesssim 
\inf_{(v,\psi) \in X_h} \left( \|u - v\|_{H^1(\Omega)} + \|\varphi - \psi\|_{H^{-1/2}(\Gamma)}\right) 
+  
\sup_{(v,\psi) \in X} \left( \frac{|\varepsilon_1(v)|}{\|v\|_{H^1(\Omega)}} + 
\frac{|\varepsilon_2(\psi)|}{\|\psi\|_{H^{-1/2}(\Gamma)} }\right).
\end{equation}
The infimum in (\ref{eq:standard-estimate-3}) has been treated in (\ref{eq:standard-estimate-2}). The supremum
in (\ref{eq:standard-estimate-3}) can be bounded in the desired fashion with the approximation assumptions 
(\ref{eq:lemma:standard-variational-crimes-100}) and the mapping properties the operators $\hyp$ and $1/2-\dlo$.
\end{proof}
In Theorem~\ref{thm:variational-crimes} below, we show that (up to logarithmic terms) also the 
improved convergence rates
for $\|\varphi - \varphi_h\|_{H^{-1/2}(\Gamma)}$ shown in Theorem~\ref{thm:approximation-of-varphi} (\ref{item:thm:approximation-of-varphi-ii}) 
are retained if the data $u_0$, $\phi_0$ are replaced with approximations~\eqref{eq:lemma:standard-variational-crimes-100}. The proof of this result requires us to 
address a technical issue, namely, the slight mismatch between
the regularity of the dual solution $w$ available to us and the regularity needed for the trace operator to be 
well-defined: we have $w \in B^{3/2}_{2,\infty}(\Omega)$ but for $\nabla w$ to have an $L^2(\Gamma)$-trace on 
$\Gamma$, we need $w \in B^{3/2}_{2,1}(\Omega)$, which is slightly stronger. The following 
lemma shows that for $w \in B^{n+3/2}_{2,\infty}(\Omega)$, 
one can construct a nearby function in the space $B^{n+3/2}_{2,1}(\Omega)$ that does have a trace. 

\begin{lemma} 
\label{lemma:B12infty-trace}
Let $\Omega \subset \BbbR^d$ be a bounded Lipschitz domain with boundary $\Gamma:= \partial\Omega$ and $n \in \BbbN_0$. 
\begin{enumerate}[(i)] 
\item 
\label{item:lemma:B12infty-trace-i}
There is $C > 0$ 
such that for every $w \in B^{n+1/2}_{2,\infty}(\Omega)$ and every $\varepsilon \in (0,1]$, one can find 
$w_\varepsilon  \in H^{n+1}(\Omega)$ such that 
with $\theta = \frac{n+1/2}{n+1}$
\begin{equation}
\label{eq:lemma:B12infty-trace-10}
\|w - w_\varepsilon\|_{L^2(\Omega)} + 
\varepsilon^{n/(n+1)} \|w - w_\varepsilon\|_{H^n(\Omega)} + 
\varepsilon \|w_\varepsilon\|_{H^{n+1}(\Omega)} \leq 
C \varepsilon^{\theta} \|w\|_{B^{n+1/2}_{2,\infty}(\Omega)}, 
\qquad \|w_\varepsilon\|_{B^{n+1/2}_{2,1}(\Omega)} \leq C (1+|\ln \varepsilon |) \|w\|_{B^{n+1/2}_{2,\infty}(\Omega)}. 
\end{equation}
\item 
\label{item:lemma:B12infty-trace-ii}
Fix $\delta \in (0,1]$. There is $C > 0$ such that for every $\lambda \in B^0_{2,\infty}(\Gamma)$ 
and every $\varepsilon \in (0,1]$, one can find $\lambda_\varepsilon \in H^{\delta}(\Gamma)$ such that 
$$
\|\lambda - \lambda_\varepsilon\|_{H^{-\delta}(\Gamma)} 
+ \varepsilon^{1/2} \|\lambda - \lambda_\varepsilon\|_{B^0_{2,\infty}(\Gamma)}
+ \varepsilon \|\lambda_\varepsilon\|_{H^\delta (\Gamma)} 
\leq C \varepsilon^{1/2} \|\lambda\|_{B^0_{2,\infty}(\Gamma)}, 
\qquad 
\|\lambda_\varepsilon\|_{L^2(\Gamma)} \leq C \sqrt{1+|\ln \varepsilon|} \|\lambda\|_{B^0_{2,\infty}(\Gamma)}.
$$
\end{enumerate}
The constant $C>0$ depends only on $\Omega$, $n$, and $\delta$.
\end{lemma}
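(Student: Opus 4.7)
Both parts are really the same construction applied to different interpolation couples. For (i) the natural scale is the interpolation identity
$B^{n+1/2}_{2,\infty}(\Omega) = (L^2(\Omega),H^{n+1}(\Omega))_{\theta,\infty}$ with precisely $\theta=(n+1/2)/(n+1)$, so that the corresponding K-functional $K(t,w):=\inf_{v\in H^{n+1}}\{\|w-v\|_{L^2}+t\|v\|_{H^{n+1}}\}$ satisfies $K(t,w)\lesssim t^{\theta}\|w\|_{B^{n+1/2}_{2,\infty}}$. I would define $w_\varepsilon$ as a near-minimizer at scale $t=\varepsilon$; this gives the outer estimates
$\|w-w_\varepsilon\|_{L^2}+\varepsilon\|w_\varepsilon\|_{H^{n+1}}\lesssim \varepsilon^{\theta}\|w\|_{B^{n+1/2}_{2,\infty}}$
for free. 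For part (ii), the same recipe applies to $B^{0}_{2,\infty}(\Gamma)=(H^{-\delta}(\Gamma),H^{\delta}(\Gamma))_{1/2,\infty}$, giving $\lambda_\varepsilon\in H^{\delta}(\Gamma)$ with $\|\lambda-\lambda_\varepsilon\|_{H^{-\delta}}+\varepsilon\|\lambda_\varepsilon\|_{H^\delta}\lesssim\varepsilon^{1/2}\|\lambda\|_{B^0_{2,\infty}}$.

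The intermediate norm in (i), i.e.\ the $\varepsilon^{n/(n+1)}\|w-w_\varepsilon\|_{H^n}$ term, is not directly visible from one invocation of the K-functional; I would obtain it by a dyadic telescoping. Choosing the family $w_t$ of near-minimizers simultaneously for $t=2^{-k}\varepsilon$, write
$w-w_\varepsilon=\sum_{k\ge 1}(w_{2^{-k}\varepsilon}-w_{2^{-(k-1)}\varepsilon})$,
where each summand lies in $H^{n+1}$ with $L^2$-norm $\lesssim(2^{-k}\varepsilon)^{\theta}\|w\|$ and $H^{n+1}$-norm $\lesssim(2^{-k}\varepsilon)^{\theta-1}\|w\|$. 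Applying the interpolation inequality $\|f\|_{H^n}\lesssim\|f\|_{L^2}^{1/(n+1)}\|f\|_{H^{n+1}}^{n/(n+1)}$ to each summand and summing the resulting geometric series in $2^{-k/(2(n+1))}$ yields $\|w-w_\varepsilon\|_{H^n}\lesssim \varepsilon^{1/(2(n+1))}\|w\|$, which, multiplied by $\varepsilon^{n/(n+1)}$, is exactly $\varepsilon^{\theta}\|w\|$. For the $B^{n+1/2}_{2,1}$ bound in (i), I would use the integral definition $\|w_\varepsilon\|_{B^{n+1/2}_{2,1}}\sim\int_0^1 t^{-\theta}K(t,w_\varepsilon)\,dt/t$, split at $t=\varepsilon$, estimate $K(t,w_\varepsilon)\le t\|w_\varepsilon\|_{H^{n+1}}$ on $(0,\varepsilon]$ and $K(t,w_\varepsilon)\le K(t,w)+\|w-w_\varepsilon\|_{L^2}$ on $[\varepsilon,1]$. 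The first region integrates to a constant times $\|w\|_{B^{n+1/2}_{2,\infty}}$, while the second produces the $|\ln\varepsilon|$ factor from $\int_\varepsilon^1 dt/t$. The intermediate estimate in part (ii), $\|\lambda-\lambda_\varepsilon\|_{B^0_{2,\infty}}\lesssim\|\lambda\|_{B^0_{2,\infty}}$, follows from the same K-functional split applied to $\lambda-\lambda_\varepsilon$: for $t\le\varepsilon$ one uses $K(t,\lambda-\lambda_\varepsilon)\le K(t,\lambda)+t\|\lambda_\varepsilon\|_{H^\delta}\lesssim t^{1/2}\|\lambda\|$ (exploiting $t\le \varepsilon$ in the second summand), and for $t\ge\varepsilon$ one simply bounds $K(t,\lambda-\lambda_\varepsilon)\le\|\lambda-\lambda_\varepsilon\|_{H^{-\delta}}$.

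The only place where the two parts differ qualitatively is the sharper logarithmic factor in the $L^2(\Gamma)$-bound in (ii): $\sqrt{1+|\ln\varepsilon|}$ rather than $1+|\ln\varepsilon|$. I would recover it via the Hilbert-space identity $L^2(\Gamma)=(H^{-\delta}(\Gamma),H^\delta(\Gamma))_{1/2,2}$, whose norm is the $\ell^2$-weighted K-functional $\bigl(\int_0^\infty(t^{-1/2}K(t,\lambda_\varepsilon))^2\,dt/t\bigr)^{1/2}$ instead of the $\ell^1$-weighted one. The same split at $t=\varepsilon$ as above then produces $\int_\varepsilon^1\|\lambda\|^2\,dt/t \sim |\ln\varepsilon|\,\|\lambda\|^2$ under the square root. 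The main technical obstacle I expect is the dyadic telescoping step: one must handle the endpoint issue that $w$ itself is not in $H^{n+1}$ (so one cannot invoke the multiplicative interpolation inequality on $w-w_\varepsilon$ directly), and verify that the arithmetic of the exponents $\theta$, $\theta-1$ and $1/(2(n+1))$ in the geometric sum conspires to give exactly the required $\varepsilon^\theta$ after the final multiplication by $\varepsilon^{n/(n+1)}$. Everything else is bookkeeping with the definition~\eqref{eq:intnorm} and the interpolation estimate~\eqref{eq:intest}.
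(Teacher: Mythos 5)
Your proposal is correct, and it reproduces the paper's skeleton exactly for the outer estimates, the $B^{n+1/2}_{2,1}(\Omega)$-bound in (i), and the $L^2(\Gamma)$-bound in (ii): pick near-minimizers of the $K$-functional at scale $t=\varepsilon$, split the integral (or supremum) at $t=\varepsilon$, and use the two elementary bounds $K(t,w_\varepsilon)\le t\|w_\varepsilon\|_{H^{n+1}}$ and $K(t,w_\varepsilon)\le K(t,w)+\|w-w_\varepsilon\|_{L^2}$.

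Where you genuinely diverge is in the two "intermediate" estimates, $\varepsilon^{n/(n+1)}\|w-w_\varepsilon\|_{H^n(\Omega)}$ in (i) and $\|\lambda-\lambda_\varepsilon\|_{B^0_{2,\infty}(\Gamma)}$ in (ii). The paper handles these by citing a near-minimizer lemma of Bramble--Scott (guaranteeing one may choose $w_\varepsilon$ so that, additionally, $\|w-w_\varepsilon\|_{B^{n+1/2}_{2,\infty}(\Omega)}\lesssim\|w\|_{B^{n+1/2}_{2,\infty}(\Omega)}$), together with the reiteration identity $H^n(\Omega)=(L^2(\Omega),B^{n+1/2}_{2,\infty}(\Omega))_{\mu,2}$ with $\mu=n/(n+1/2)$ and a single multiplicative interpolation inequality~\eqref{eq:intest} applied to $w-w_\varepsilon$. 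This is admissible because $w-w_\varepsilon\in B^{n+1/2}_{2,\infty}(\Omega)$ even though it fails to lie in $H^{n+1}(\Omega)$. Your dyadic telescoping of $w-w_\varepsilon$ into $H^{n+1}$-pieces $f_k=w_{2^{-k}\varepsilon}-w_{2^{-(k-1)}\varepsilon}$, followed by interpolation of each piece between $L^2$ and $H^{n+1}$ and summation of the geometric series, is a valid alternative that bypasses Bramble--Scott and the reiteration theorem entirely; the exponent bookkeeping works out (each $\|f_k\|_{H^n}\lesssim(2^{-k}\varepsilon)^{1/(2(n+1))}\|w\|$, the series converges, and $\varepsilon^{n/(n+1)}\cdot\varepsilon^{1/(2(n+1))}=\varepsilon^{\theta}$). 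The "endpoint issue" you flag is harmless: since $w\in B^{n+1/2}_{2,\infty}(\Omega)\subset H^n(\Omega)$, the partial sums are Cauchy in $H^n$ and converge in $L^2$ to $w-w_\varepsilon$, so the $H^n$-limit is identified. In (ii) you likewise replace the Bramble--Scott citation by the direct $K$-functional computation, which is again correct and somewhat more self-contained. The trade-off is essentially elegance versus self-containedness: the paper's one-line reiteration argument is shorter but depends on an external approximation lemma, while your telescoping is longer but uses only the definition of the $K$-functional.
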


\begin{proof} 
{\em Proof of \eqref{item:lemma:B12infty-trace-i}:}
First, we recall that the reiteration theorem~\cite[Thm.~26.3]{tartar07} allows us to define the Besov spaces 
$B^s_{2,q}(\Omega)$ by interpolation between Sobolev spaces $H^{s_1}(\Omega)$ and $H^{s_2}(\Omega)$. We take specifically 
$s_1 = 0$ and $s_2 = n+1$. Then $B^{n+1/2}_{2,q}(\Omega) = (L^2(\Omega),H^{n+1}(\Omega))_{\theta,q}$ with $\theta = (n+1/2)/(n+1)$. 
We recall the definition of the pertinent $K$-functional
$K(t,u) = \inf_{v \in H^{n+1}(\Omega)} \|u - v\|_{L^2(\Omega)} + t \|v\|_{H^{n+1}(\Omega)}$ 
and the corresponding interpolation norm from~\eqref{eq:intnorm}.
By definition of the interpolation space ${B^{n+1/2}_{2,\infty}(\Omega)}$, one can find, for every 
$\varepsilon > 0$, a function $w_\varepsilon \in H^{n+1}(\Omega)$ such that 
\begin{equation}
\label{eq:lemma:B12infty-trace-1}
\|w - w_\varepsilon\|_{L^2(\Omega)} + \varepsilon \|w_\varepsilon\|_{H^{n+1}(\Omega)} 
\leq C \varepsilon^{\theta} \|w\|_{B^{n+1/2}_{2,\infty}(\Omega)} 
\qquad \mbox{ and } \qquad 
\|w - w_\varepsilon\|_{B^{n+1/2}_{2,\infty}(\Omega)} \leq C \|w\|_{B^{n+1/2}_{2,\infty}(\Omega)}. 
\end{equation}
The first of these estimates follows from the definition, the second one is shown in \cite[Lemma]{bramble-scott78}.  
In order to complete the 
first bound in \eqref{eq:lemma:B12infty-trace-10}, we recall that the reiteration theorem~\cite[Thm.~26.3]{tartar07} yields
$H^n(\Omega) = (L^2(\Omega),B_{2,\infty}^{n+1/2}(\Omega))_{\mu,2}$ with $\mu = n / (n+1/2)$.
The interpolation inequality~\eqref{eq:intest} yields
$$
\|w - w_\varepsilon\|_{H^n(\Omega)} 
\lesssim 
\|w - w_\varepsilon\|_{L^2(\Omega)}^{1-n/(n+1/2)} 
\|w - w_\varepsilon\|_{B^{n+1/2}_{2,\infty}(\Omega)}^{n/(n+1/2)}
\lesssim \varepsilon^{\theta (1-n/(n+1/2))} \|w\|_{B^{n+1/2}_{2,\infty}(\Omega)} 
= \varepsilon^{-n/(n+1)}\varepsilon^\theta \|w\|_{B^{n+1/2}_{2,\infty}(\Omega)} .
$$
We turn to the second bound in \eqref{eq:lemma:B12infty-trace-10}. We first mention 
that, as it is shown in \cite[Chap.~6, Sec.~7]{devore93}, we may replace the integral 
over $(0,\infty)$ in (\ref{eq:intnorm}) by an integral over $(0,1)$. Next, 
we have the simple triangle inequality $K(t,w_\varepsilon) \leq K(t,w) 
+ \|w - w_\varepsilon\|_{L^2(\Omega)}$.  A second bound for $K(t,w_\varepsilon)$ is obtained
by taking $v = 0$ in the defining infimum: $K(t,w_\varepsilon) \leq t \|w_\varepsilon\|_{H^{n+1}(\Omega)}$. 
Put together, we arrive at 
$K(t,w_\varepsilon) \leq \min\{t \|w_\varepsilon\|_{H^{n+1}(\Omega)}, K(t,w) + \|w - w_\varepsilon\|_{L^2(\Omega)}\}$. 
We compute for $w_\varepsilon \in H^{n+1}(\Omega)$
\begin{align*}
\|w_\varepsilon\|_{B^{n+1/2}_{2,1}(\Omega)} & \simeq
\int_{t=0}^\varepsilon t^{-\theta} K(t,w_\varepsilon)\frac{dt}{t} + \int_{t=\varepsilon}^1 t^{-\theta} 
K(t,w_\varepsilon) \frac{dt}{t}
\le \varepsilon^{1-\theta} \|w_\varepsilon\|_{H^{n+1}(\Omega)} + \varepsilon^{-\theta} \|w - w_\varepsilon\|_{L^2(\Omega)} 
+\int_{t = \varepsilon}^1 t^{-\theta} K(t,w) \frac{dt}{t}  \\
&\stackrel{\eqref{eq:lemma:B12infty-trace-1}}{\lesssim }
\varepsilon^{1-\theta} \varepsilon^{\theta-1}\|w_\varepsilon\|_{B^{n+1/2}_{2,\infty}(\Omega)} 
+ \varepsilon^{\theta} \varepsilon^{-\theta} \|w\|_{B^{n+1/2}_{2,\infty} (\Omega)} 
+ (1+|\ln \varepsilon|) \|w\|_{B^{n+1/2}_{2,\infty}(\Omega)} 
\lesssim (1+|\ln \varepsilon|) \|w\|_{B^{n+1/2}_{2,\infty}(\Omega)},
\end{align*}
where we finally used $\|w_\varepsilon\|_{B^{n+1/2}_{2,\infty}(\Omega)}
\leq \|w \|_{B^{n+1/2}_{2,\infty}(\Omega)} + 
\|w - w_\varepsilon\|_{B^{n+1/2}_{2,\infty}(\Omega)}
\lesssim \|w\|_{B^{n+1/2}_{2,\infty}(\Omega)}$ from~\eqref{eq:lemma:B12infty-trace-1}.

{\em Proof of \eqref{item:lemma:B12infty-trace-ii}:} 
We proceed by similar arguments as in the proof of \eqref{item:lemma:B12infty-trace-i}:
We exploit the characterizations $L^2(\Gamma) = (H^{-\delta}(\Gamma),H^\delta(\Gamma))_{1/2,2}$ 
and $B^0_{2,\infty}(\Gamma) = (H^{-\delta}(\Gamma),H^\delta(\Gamma))_{1/2,\infty}$. 
{}From the properties of the $K$-functional, we get 
the existence of $\lambda_\varepsilon$ such that 
$$
\|\lambda - \lambda_\varepsilon\|_{H^{-\delta}(\Gamma)} + \varepsilon \|\lambda_\varepsilon\|_{H^\delta(\Gamma)} 
\leq C \varepsilon^{1/2} \|\lambda\|_{B^0_{2,\infty}(\Gamma)}, 
\qquad 
\|\lambda - \lambda_\varepsilon\|_{B^0_{2,\infty}(\Gamma)} \leq C 
\|\lambda \|_{B^0_{2,\infty}(\Gamma)},  
$$
where the second bound follows again from \cite{bramble-scott78}. It remains to bound 
$\|\lambda_\varepsilon\|_{L^2(\Gamma)}$. As above, we observe 
$K(t,\lambda_\varepsilon) \leq \min\{t \|\lambda_\varepsilon\|_{H^\delta(\Gamma)}, K(t,\lambda) + \|\lambda - \lambda_\varepsilon\|_{H^{-\delta}(\Gamma)}\}$. Hence, 
\begin{align*}
\|\lambda_\varepsilon\|^2_{L^2(\Gamma)} &\simeq
\int_{t=0}^1 \left| t^{-1/2} K(t,\lambda_\varepsilon)\right|^2 \frac{dt}{t}
\lesssim
\int_{t=0}^{\varepsilon} \|\lambda_\varepsilon\|^2_{H^\delta(\Gamma)}\,dt + 
\int_{t=\varepsilon}^1 \left| t^{-1/2} K(t,\lambda)\right|^2\frac{dt}{t}+ 
\int_{t=\varepsilon}^1 t^{-2} \|\lambda - \lambda_\varepsilon\|^2_{H^{-\delta}(\Gamma)}\,dt \\
&\le \varepsilon \|\lambda_\varepsilon\|^2_{H^{\delta}(\Gamma)} 
+ |\ln \varepsilon| \sup_{t > 0} |t^{-1/2} K(t,\lambda)|^2 + \varepsilon^{-1} \|\lambda - \lambda_\varepsilon\|^2_{H^{-\delta}(\Gamma)} 
\lesssim (1 + |\ln \varepsilon|) \|\lambda\|^2_{B^0_{2,\infty}(\Gamma)},  
\end{align*}
from which the result follows. 
\end{proof}

\begin{theorem}[variational crimes]
\label{thm:variational-crimes}
Let $u_0 \in H^{k+1}_{pw}(\Gamma) \cap H^1(\Gamma)$ and $\phi_0 \in H^{k}_{pw}(\Gamma)$.
Let $(u,\varphi) \in X$ be the solution of \eqref{eq:rhs-weak}--\eqref{eq:weak-form}.
Let $(u_h,\varphi_h)\in X_h$ be the Galerkin solution of~\eqref{eq:weak-form-FEM}
with $u_0$ and $\phi_0$ in \eqref{eq:rhs-weak} replaced by approximations $\Pi^k_h u_0$ 
and $\Pi^{k-1}_h \phi_0$ that have the following approximation properties: 
\begin{align}
\label{eq:cor:variational-crimes-100}
h^{-1/2} \|u_0 - \Pi^k_h u_0\|_{L^2(\Gamma)} + 
\|u_0 - \Pi^k_h u_0\|_{H^{1/2}(\Gamma)} &\leq C_{\text{apx}} h^{k+1/2} \|u_0\|_{H^{k+1}_{pw}(\Gamma)},  \\
\label{eq:cor:variational-crimes-200}
h^{-1/2} \|\phi_0 - \Pi^{k-1}_h \phi_0\|_{H^{-1}(\Gamma)} + 
\|\phi_0 - \Pi^{k-1}_h \phi_0\|_{H^{-1/2}(\Gamma)}  &\leq C_{\text{apx}} h^{k+1/2} \|\phi_0\|_{H^k_{pw}(\Gamma)}. 
\end{align}
Under the assumptions of Theorem~\ref{thm:approximation-of-varphi} (\ref{item:thm:approximation-of-varphi-ii})
it holds
\begin{eqnarray}
\label{eq:cor:variational-crimes-1029}
\|\varphi - \varphi_h\|_{H^{-1/2}(\Gamma)} &\leq& C h^{k+1/2} |\ln h| 
\Big[ \|u\|_{B^{k+3/2}_{2,1}(\Omega)} + \|u_0\|_{H^{k+1}_{pw}(\Gamma)} + \|\varphi\|_{H^k_{pw}(\Gamma)}
+ \|\phi_0\|_{H^k_{pw}(\Gamma)}\Big],\\
\label{eq:cor:variational-crimes-1030}
\|u - u_h\|_{L^2(S_h)} &\leq& C h^{k+3/2} |\ln h| 
\Big[ \|u\|_{B^{k+3/2}_{2,1}(\Omega)} + \|u_0\|_{H^{k+1}_{pw}(\Gamma)} + \|\varphi\|_{H^k_{pw}(\Gamma)}
+ \|\phi_0\|_{H^k_{pw}(\Gamma)}\Big].
\end{eqnarray}
The constant $C >0$ in 
\eqref{eq:cor:variational-crimes-1029}--\eqref{eq:cor:variational-crimes-1030}
depends on the same quantities as the constant $C$ in 
Theorem~\ref{thm:approximation-of-varphi} (\ref{item:thm:approximation-of-varphi-ii}) 
and additionally on $C_{\text{apx}}$.
\end{theorem}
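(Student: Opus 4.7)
My plan is to follow the template of Theorem~\ref{thm:approximation-of-varphi} while accounting for the two nonzero error functionals that enter the Galerkin orthogonality~\eqref{eq:orthogonalities-primal}, namely (reading off as in~\eqref{eq:specific-errors})
\begin{equation*}
  \varepsilon_1(v) = \langle\delta_\phi,\gamma v\rangle_\Gamma + \langle \hyp\delta_u,\gamma v\rangle_\Gamma,
  \qquad
  \varepsilon_2(\psi) = \langle(1/2-\dlo)\delta_u,\psi\rangle_\Gamma,
\end{equation*}
where $\delta_u := u_0-\Pi^k_h u_0$ and $\delta_\phi := \phi_0-\Pi^{k-1}_h\phi_0$. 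Inserting these into Lemma~\ref{lemma:error-on-strip}, the four standard terms on the right-hand side are controlled exactly as in the proof of~\eqref{eq:thm2}, contributing the desired $h^{k+3/2}(1+\delta_{k,1}|\ln h|)(\|u\|_{B^{k+3/2}_{2,1}(\Omega)}+\|\varphi\|_{H^k_{pw}(\Gamma)})\|\chi_{S_h}e\|_{L^2(\Omega)}$. The remaining task is to bound the extra contribution $|\varepsilon_1(w_h)-\varepsilon_2(\lambda_h)|$ by the same expression with the additional data norms $\|u_0\|_{H^{k+1}_{pw}(\Gamma)}+\|\phi_0\|_{H^k_{pw}(\Gamma)}$ and at most one more logarithm; this will yield~\eqref{eq:cor:variational-crimes-1030}, from which~\eqref{eq:cor:variational-crimes-1029} will then follow.

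To handle this extra contribution, I split $w_h = w-(w-w_h)$ and $\lambda_h = \lambda-(\lambda-\lambda_h)$. The pieces involving the Galerkin errors are harmless: combining the $H^{-1/2}(\Gamma)$-bounds of~\eqref{eq:cor:variational-crimes-100}--\eqref{eq:cor:variational-crimes-200}, continuity of $\hyp$ and $1/2-\dlo$, and the energy-norm estimate of Lemma~\ref{lemma:a-priori-dual} give $|\varepsilon_1(w-w_h)|+|\varepsilon_2(\lambda-\lambda_h)|\lesssim h^{k+3/2}(\|u_0\|_{H^{k+1}_{pw}}+\|\phi_0\|_{H^k_{pw}})\|\chi_{S_h}e\|_{L^2}$. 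For the exact pieces $\varepsilon_1(w)$ and $\varepsilon_2(\lambda)$, the key observation is that $(w,\lambda)=T^\dual(\chi_{S_h}e)$ solves the dual transmission problem with \emph{trivial} transmission data $u_0=\phi_0\equiv 0$. Lemma~\ref{lemma:weak-form-equals-strong-form} together with Lemma~\ref{lemma:calderon} then yields the strong-form identity $(\opA\nabla w)\cdot n = \lambda$ on $\Gamma$. Inserting the integration-by-parts consequence $\hyp\gamma w = -(\opA\nabla w)\cdot n + (1/2-\adlo)\lambda$ of~\eqref{eq:dual-problem-concrete-1} into $\varepsilon_1(w)$, using symmetry of $\hyp$, and recognising that $\langle\delta_u,(1/2-\adlo)\lambda\rangle_\Gamma = \varepsilon_2(\lambda)$ collapses the hypersingular term to the clean cancellation
\begin{equation*}
 \varepsilon_1(w)-\varepsilon_2(\lambda) = \langle\delta_\phi,\gamma w\rangle_\Gamma - \langle\delta_u,\lambda\rangle_\Gamma.
\end{equation*}

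The final and technically most delicate step is to bound each of these two pairings by $h^{k+3/2}|\ln h|\,\|\chi_{S_h}e\|_{L^2(\Omega)}$. Since~\eqref{eq:regularity-of-w-1} only delivers $w\in B^{3/2}_{2,\infty}(\Omega)$ and $\lambda\in B^0_{2,\infty}(\Gamma)$, neither a usable $H^1(\Gamma)$-trace of $w$ nor an $L^2(\Gamma)$-norm of $\lambda$ is directly at hand---precisely the mismatch that Lemma~\ref{lemma:B12infty-trace} was crafted to resolve. Applying part~(i) with $n=1$ and regularization parameter $h^2$ produces $w_\varepsilon\in H^2(\Omega)\cap B^{3/2}_{2,1}(\Omega)$ with $\|w-w_\varepsilon\|_{H^1(\Omega)}\lesssim h^{1/2}\|w\|_{B^{3/2}_{2,\infty}}$ and, via the trace embedding $B^{3/2}_{2,1}(\Omega)\to B^1_{2,1}(\Gamma)\subset H^1(\Gamma)$, $\|\gamma w_\varepsilon\|_{H^1(\Gamma)}\lesssim |\ln h|\,\|w\|_{B^{3/2}_{2,\infty}}$. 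Pairing $\gamma w_\varepsilon$ against $\|\delta_\phi\|_{H^{-1}(\Gamma)}\lesssim h^{k+1}$ and $\gamma(w-w_\varepsilon)$ against $\|\delta_\phi\|_{H^{-1/2}(\Gamma)}\lesssim h^{k+1/2}$, and inserting~\eqref{eq:regularity-of-w-1}, yields $|\langle\delta_\phi,\gamma w\rangle|\lesssim h^{k+3/2}|\ln h|\,\|\chi_{S_h}e\|_{L^2}$. Part~(ii) of Lemma~\ref{lemma:B12infty-trace} with $\delta=1/2$ and parameter $h$ analogously provides $\lambda_\varepsilon\in H^{1/2}(\Gamma)$; pairing $\lambda_\varepsilon$ against $\|\delta_u\|_{L^2(\Gamma)}\lesssim h^{k+1}$ and $\lambda-\lambda_\varepsilon$ against $\|\delta_u\|_{H^{1/2}(\Gamma)}\lesssim h^{k+1/2}$ furnishes the matching bound for $\langle\delta_u,\lambda\rangle$. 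Collecting all contributions completes the proof of~\eqref{eq:cor:variational-crimes-1030}.

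Estimate~\eqref{eq:cor:variational-crimes-1029} is then obtained from~\eqref{eq:cor:variational-crimes-1030} by the very same chain of inequalities used to derive~\eqref{eq:thm1} from~\eqref{eq:thm2}: the perturbed Galerkin orthogonality~\eqref{eq:orthogonalities-primal-2} now introduces only the additional term $\varepsilon_2(\IM\varphi-\varphi_h)$, which by~\eqref{eq:cor:variational-crimes-100} and the continuity of $1/2-\dlo$ is bounded by $h^{k+1/2}\|u_0\|_{H^{k+1}_{pw}}\|\IM\varphi-\varphi_h\|_V$ and absorbed; an inverse estimate on $\Gamma$ combined with~\eqref{eq:cor:variational-crimes-1030} closes the argument. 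The main obstacle throughout is the appearance of $(\opA\nabla w)\cdot n$ in $\varepsilon_1(w)$: as a generic $H^{-1/2}(\Gamma)$ distribution it has size $\|\chi_{S_h}e\|_{L^2(\Omega)}$, one order in $h$ too large, and only the identification $(\opA\nabla w)\cdot n=\lambda$, available precisely because the dual data vanish, replaces it by the much milder object $\lambda\in B^0_{2,\infty}(\Gamma)$ that the logarithmic machinery of Lemma~\ref{lemma:B12infty-trace} can handle.
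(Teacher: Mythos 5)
Your proof is correct, and it takes a genuinely different route from the paper's in one key step. Both arguments reduce matters to controlling the extra term $\varepsilon_1(w_h)-\varepsilon_2(\lambda_h)$ in Lemma~\ref{lemma:error-on-strip}, both handle the Galerkin-error parts $\varepsilon_1(w-w_h)$, $\varepsilon_2(\lambda-\lambda_h)$ the same way via Lemma~\ref{lemma:a-priori-dual}, and both reach for Lemma~\ref{lemma:B12infty-trace} to regularize the dual solution. The difference lies in how the ``exact'' parts are treated. The paper splits $w_h=(w_h-w)+(w-w_\varepsilon)+w_\varepsilon$ (and analogously for $\lambda_h$) and then bounds $|\varepsilon_1(w_\varepsilon)|$ directly, absorbing the hypersingular contribution $\langle\hyp\delta_u,w_\varepsilon\rangle$ via the mapping property $\hyp:L^2(\Gamma)\to H^{-1}(\Gamma)$ and the trace bound $\|w_\varepsilon\|_{H^1(\Gamma)}\lesssim h^{1/2}|\ln h|\|\cutoff e\|_{L^2(\Omega)}$. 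You instead invoke the strong form of the dual problem (Lemma~\ref{lemma:weak-form-equals-strong-form} with trivial data, together with the Calder\'on system of Lemma~\ref{lemma:calderon}) to obtain $\hyp\gamma w=-(\opA\nabla w)\cdot n+(1/2-\adlo)\lambda$ and $(\opA\nabla w)\cdot n=\lambda$, which via self-adjointness of $\hyp$ and the adjoint relation between $\dlo$ and $\adlo$ collapses $\varepsilon_1(w)-\varepsilon_2(\lambda)$ to $\langle\delta_\phi,\gamma w\rangle-\langle\delta_u,\lambda\rangle$; these two cleaner pairings are then estimated with Lemma~\ref{lemma:B12infty-trace} exactly as in the paper, yielding the same $h^{k+3/2}|\ln h|$ bound. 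Your route has the advantage of exhibiting a structural cancellation intrinsic to the symmetric coupling, bypassing the auxiliary stability $\hyp:L^2\to H^{-1}$ and making visible exactly why the normal derivative $(\opA\nabla w)\cdot n$ --- naively only an $H^{-1/2}(\Gamma)$ object of size $\|\cutoff e\|_{L^2(\Omega)}$, one power of $h$ too large --- is harmless: it coincides with $\lambda\in B^0_{2,\infty}(\Gamma)$, which is smaller by $h^{1/2}$. The paper's route is more pedestrian but does not require passing to the strong form of the dual problem, and so would adapt more readily to coupling variants where the exact Calder\'on cancellation is unavailable. Your treatment of the final estimate~\eqref{eq:cor:variational-crimes-1029} via the perturbed orthogonality $\varepsilon_2(\IM\varphi-\varphi_h)$ in the argument that derives~\eqref{eq:thm1} from~\eqref{eq:thm2} is also correct and consistent with the claimed dependence on the data norms.
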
%

\begin{proof}
As in the proof of Lemma~\ref{lemma:standard-variational-crimes}
The Galerkin orthogonality~\eqref{eq:orthogonalities-primal} for the primal problem now 
includes the linear functionals
\begin{align*} 
 \varepsilon_1(v) &:= \langle \phi_0 - \Pi^{k-1}_h \phi_0, v\rangle + \langle \hyp (u_0 - \Pi^k_h u_0),v\rangle,\\
 \varepsilon_2(\psi) &:= \langle (\frac{1}{2} - \dlo) (u_0 - \Pi^k_h u_0),\psi\rangle,
\end{align*}
on the right-hand side. With Lemma~\ref{lemma:error-on-strip}, we observe that~\eqref{eq:proof-of-thm:error-on-strip-100} then holds with additionally the summand $\varepsilon_1(w_h)-\varepsilon_2(\lambda_h)$ on the
right-hand side. Recall that $e=u-u_h$ is the FEM-part of the primal error, and $(w,\lambda)\in X$ is
the solution of the dual problem~\eqref{eq:dual-problem-concrete} with Galerkin approximation
$(w_h,\lambda_h)\in X_h$.
Arguing along the lines of the proof of Theorem~\ref{thm:approximation-of-varphi}, estimate~\eqref{eq:thm2}, 
we see that additional error terms 
$$
\frac{|\varepsilon_1(w_h)| + |\varepsilon_2(\lambda_h)|}{\|\cutoff e\|_{L^2(\Omega)}} 
$$
arise. We now claim the following two bounds: 
\begin{align}
\label{eq:crimes1}
 |\varepsilon_1(w_h)| &\lesssim h^{k+3/2} |\ln h| \Big[ \|u_0\|_{H^{k+1}_{pw}(\Gamma)} 
+ \|\phi_0\|_{H^{k}_{pw}(\Gamma)}\Big]  \|\cutoff e\|_{L^2(\Omega)},\\
\label{eq:crimes2}
 |\varepsilon_2(\lambda_h)| &\lesssim h^{k+3/2} |\ln h|^{1/2} \|u_0\|_{H^{k+1}_{pw}(\Gamma)} \|\cutoff e\|_{L^2(\Omega)}.
\end{align}
{}From the {\sl a~priori} estimate~\eqref{eq:lemma:B32-regularity-2} of Lemma~\ref{lemma:B32-regularity},
we obtain $\|w\|_{B^{3/2}_{2,\infty}(\Omega)} \lesssim h^{1/2} \|\cutoff e\|_{L^2(\Omega)}$.
Lemma~\ref{lemma:B12infty-trace} with $\varepsilon = h^2$ and $n=1$ 
provides an approximation $w_\varepsilon \in H^2(\Omega)$ to $w$.
(We still write 
$w_\varepsilon$ to avoid confusion with the Galerkin approximation $w_h$.)
The estimates~\eqref{eq:lemma:B12infty-trace-10} take the form
\begin{subequations}\label{eq:crimes3}
\begin{align}\label{eq:crimes3a}
&\|w - w_\varepsilon\|_{L^2(\Omega)} + 
h\, \|w - w_\varepsilon\|_{H^1(\Omega)} + 
h^2 \|w_\varepsilon\|_{H^{2}(\Omega)} 
\lesssim h^{3/2} \|w\|_{B^{3/2}_{2,\infty}(\Omega)}
\lesssim h^2 \|\cutoff e\|_{L^2(\Omega)},\\
\label{eq:crimes3b}
&\|w_\varepsilon\|_{B^{3/2}_{2,1}(\Omega)} 
\lesssim |\ln h| \, \|w\|_{B^{3/2}_{2,\infty}(\Omega)}
 \lesssim h^{1/2}\, |\ln h| \, \|\cutoff e\|_{L^2(\Omega)}.
\end{align}
\end{subequations}
Linearity of $\varepsilon_1$ yields
\begin{align}
\label{eq:crimes-10}
 |\varepsilon_1(w_h)|
 \le |\varepsilon_1(w-w_h)| + |\varepsilon_1(w-w_\varepsilon)|  + |\varepsilon_1(w_\varepsilon)|.
\end{align}
For the first summand in (\ref{eq:crimes-10}), 
stability of the hypersingular operator $\hyp:H^{1/2}(\Gamma)\to H^{-1/2}(\Gamma)$, 
assumptions~\eqref{eq:cor:variational-crimes-100}--\eqref{eq:cor:variational-crimes-200}, 
and Lemma~\ref{lemma:a-priori-dual} yield
\begin{eqnarray*}
 |\varepsilon_1(w-w_h)|
 &=& |\langle \phi_0 - \Pi^{k-1}_h \phi_0, w - w_h\rangle + \langle \hyp( u_0 - \Pi^{k}_h u_0),w - w_h\rangle|
\\ 
 &\lesssim& \Big[\| \phi_0 - \Pi^{k-1}_h \phi_0\|_{H^{-1/2}(\Gamma)} + \|u_0 - \Pi^k_h u_0\|_{H^{1/2}(\Gamma)}
\Big] \|w - w_h\|_{H^{1/2}(\Gamma)} 
\\&\stackrel{\eqref{eq:cor:variational-crimes-100}-\eqref{eq:cor:variational-crimes-200}}\lesssim& 
\Big[h^{k+1/2}\|\phi_0\|_{H^k_{pw}(\Gamma)} + h^{k+1/2}\| u_0 \|_{H^{k+1}_{pw}(\Gamma)}\Big]
\, \|w - w_h\|_{H^{1}(\Omega)}
 \\&\stackrel{\rm Lem.~\ref{lemma:a-priori-dual}}\lesssim& h^{k+1/2}\Big[\|\phi_0\|_{H^k_{pw}(\Gamma)} + \| u_0 \|_{H^{k+1}_{pw}(\Gamma)}\Big]\,h\, \|\cutoff e\|_{L^2(\Omega)}.
\end{eqnarray*}
For the second summand in (\ref{eq:crimes-10}), we argue similarly, but rely on estimate~\eqref{eq:crimes3a} to see
\begin{eqnarray*}
 |\varepsilon_1(w-w_\varepsilon)|
&\stackrel{\eqref{eq:cor:variational-crimes-100}-\eqref{eq:cor:variational-crimes-200}}\lesssim&
\Big[h^{k+1/2}\|\phi_0\|_{H^k_{pw}(\Gamma)} + h^{k+1/2}\| u_0 \|_{H^{k+1}_{pw}(\Gamma)}\Big]
\, \|w - w_\varepsilon\|_{H^{1}(\Omega)}
\\&\stackrel{\eqref{eq:crimes3a}}\lesssim&
h^{k+1/2}\Big[\|\phi_0\|_{H^k_{pw}(\Gamma)} + \| u_0 \|_{H^{k+1}_{pw}(\Gamma)}\Big]\,h\, \|\cutoff e\|_{L^2(\Omega)}.
\end{eqnarray*}
For the third summand in (\ref{eq:crimes-10}), stability of $\hyp:L^2(\Gamma)\to H^{-1}(\Gamma)$ yields
\begin{eqnarray*}
 |\varepsilon_1(w_\varepsilon)|
 &=& |\langle \phi_0 - \Pi^{k-1}_h \phi_0, w_\varepsilon\rangle + \langle \hyp( u_0 - \Pi^{k}_h u_0),w_\varepsilon\rangle |\\
 &\le& \Big[\|\phi_0 - \Pi^{k-1}_h \phi_0\|_{H^{-1}(\Gamma)} + \|u_0 - \Pi^{k}_h u_0\|_{L^2(\Gamma)} \Big]\,
\|w_\varepsilon\|_{H^1(\Gamma)}
 \\&\stackrel{\eqref{eq:cor:variational-crimes-100}-\eqref{eq:cor:variational-crimes-200}}\lesssim&
\Big[h^{k+1}\|\phi_0\|_{H^k_{pw}(\Gamma)} + h^{k+1}\| u_0 \|_{H^{k+1}_{pw}(\Gamma)}\Big]\, \|w_\varepsilon\|_{H^1(\Gamma)}.
\end{eqnarray*}
For the control of $\|w_\varepsilon\|_{H^1(\Gamma)}$, we use 
\eqref{eq:crimes3b} and the continuity of the trace operator 
$\gamma: B^{1/2}_{2,1}(\Omega) \rightarrow L^2(\Gamma)$ 
(see, \cite[Thm.~{2.9.1}]{triebel95} for the present case of polygons/polyhedra or 
\cite[Lemma~{2.1}]{melenk-wohlmuth14a} for the case of Lipschitz domains)
to get 
\begin{equation*}
\|w_\varepsilon\|_{H^1(\Gamma)}
\le \|w_\varepsilon\|_{L^2(\Gamma)} + \|\nabla w_\varepsilon\|_{L^2(\Gamma)} 
\lesssim \|w_\varepsilon\|_{B^{1/2}_{2,1}(\Omega)}  + \|\nabla w_\varepsilon\|_{B^{1/2}_{2,1}(\Omega)} 
\lesssim \|w_\varepsilon\|_{B^{3/2}_{2,1}(\Omega)} 
\lesssim h^{1/2} \,|\ln h| \,\|\cutoff e\|_{L^2(\Omega)}.
\end{equation*}
Combining the last estimates, we arrive at
\begin{align*}
 |\varepsilon_1(w_h)|
 \lesssim h^{k+3/2}(1+|\ln h|) \,\Big[\|\phi_0\|_{H^k_{pw}(\Gamma)} + \| u_0 \|_{H^{k+1}_{pw}(\Gamma)}\Big]\, \|\cutoff e\|_{L^2(\Omega)}, 
\end{align*}
which is~\eqref{eq:crimes1}.

We now indicate the proof of \eqref{eq:crimes2}: From the {\sl a~priori} estimate~\eqref{eq:lemma:B32-regularity-2} of Lemma~\ref{lemma:B32-regularity},
we obtain $\|\lambda\|_{B^0_{2,\infty}(\Gamma)} \lesssim h^{1/2} \|\cutoff e\|_{L^2(\Omega)}$.
Lemma~\ref{lemma:B12infty-trace} with $\delta = 1/2$ and $\varepsilon = h$ provides
$\lambda_\varepsilon\in H^{1/2}(\Gamma)$ with
\begin{subequations}\label{eq:crimes4}
\begin{align}\label{eq:crimes4a}
 &\|\lambda-\lambda_\varepsilon\|_{H^{-1/2}(\Gamma)}
 + h^{1/2}\,\|\lambda-\lambda_\varepsilon\|_{B_{2,\infty}^0(\Gamma)}
 + h\,\|\lambda_\varepsilon\|_{H^{1/2}(\Gamma)}
 \lesssim h^{1/2}\,\|\lambda\|_{B_{2,\infty}^0(\Gamma)}
 \lesssim h\, \|\cutoff e\|_{L^2(\Omega)},\\
 \label{eq:crimes4b}
 &\|\lambda_\varepsilon\|_{L^2(\Gamma)} 
 \lesssim |\ln h|^{1/2}\,\|\lambda\|_{B_{2,\infty}^0(\Gamma)}
 \lesssim h^{1/2}\,|\ln h|^{1/2}\,\|\cutoff e\|_{L^2(\Omega)}. 
\end{align}
\end{subequations}
Linearity of $\varepsilon_2$ yields
\begin{align*}
 |\varepsilon_2(\lambda_h)|
 \le |\varepsilon_2(\lambda-\lambda_h)| + |\varepsilon_2(\lambda-\lambda_\varepsilon)|
 + |\varepsilon_2(\lambda_h)|. 
\end{align*}
We recall stability of $K:H^{1/2}(\Gamma)\to H^{1/2}(\Gamma)$ as well as $K:L^2(\Gamma)\to L^2(\Gamma)$.
We use this, assumption~\eqref{eq:cor:variational-crimes-100}, and estimates~\eqref{eq:crimes4}. 
Arguing as for $\varepsilon_1(w_h)$, we obtain
\begin{eqnarray*}
 |\varepsilon_2(\lambda_h)|
 &\stackrel{\eqref{eq:cor:variational-crimes-100}}\lesssim& h^{k+1/2}\| u_0 \|_{H^{k+1}_{pw}(\Gamma)}\,\Big[\|\lambda-\lambda_h\|_{H^{-1/2}(\Gamma)}
 + \|\lambda-\lambda_\varepsilon\|_{H^{-1/2}(\Gamma)}\Big]
 + h^{k+1}\| u_0 \|_{H^{k+1}_{pw}(\Gamma)}\,\|\lambda_\varepsilon\|_{L^2(\Gamma)}
\\&\stackrel{{\rm Lem.}~\ref{lemma:a-priori-dual} {\rm~and~} \eqref{eq:crimes4},}\lesssim&
h^{k+1/2}\,\| u_0 \|_{H^{k+1}_{pw}(\Gamma)}\,h\,\|\cutoff e\|_{L^2(\Omega)}
+ h^{k+1}\,\| u_0 \|_{H^{k+1}_{pw}(\Gamma)}\,h^{1/2}|\ln h|^{1/2}\,\|\cutoff e\|_{L^2(\Omega)}
\\&\le&h^{k+3/2}(1+|\ln h|^{1/2})\,\| u_0 \|_{H^{k+1}_{pw}(\Gamma)}\,\|\cutoff e\|_{L^2(\Omega)}.
\end{eqnarray*}
This proves~\eqref{eq:crimes2} and completes the proof.
\end{proof}

\begin{remark}{\normalfont
\begin{enumerate}
\item 
The estimate \eqref{eq:cor:variational-crimes-100} can be realized by standard (nodal) interpolation. 
The estimate \eqref{eq:cor:variational-crimes-200} can be achieved by $L^2(\Gamma)$-projection. Given 
that the space $M_h$ consists of discontinuous functions, this is again a local computation. Hence, 
Theorem~\ref{thm:variational-crimes} shows that interpolating 
$u_0$ does lead to a method that, up to a logarithmic factor,  preserves 
the convergence rates of Theorem \ref{thm:approximation-of-varphi}.
\item 
The proof of Theorem~\ref{thm:variational-crimes} shows that the logarithmic term can be removed 
if more regularity is required of $u_0$ and $\phi_0$ and correspondingly higher order interpolants
are employed. 
\eremk
\end{enumerate}
}
\end{remark}
\section{Numerical results.}
\label{sec:numerics}
%
\begin{figure}[t]
\psfrag{eps}{}
\centering
\includegraphics[width=.3\textwidth]{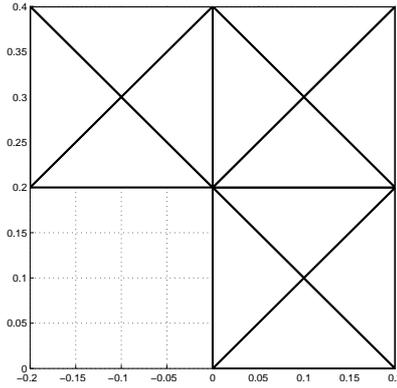}
\caption{Domain $\Omega\subset\mathbb R^2$ and initial triangulation
into $12$ triangles and $8$ boundary segments for the numerical
experiments in Section~\ref{sec:numerics}.}
\label{fig:lshape}
\end{figure}
\begin{figure}[t]
\psfrag{O(1)}{$O(h)$}
\psfrag{O(3/2)}{$O(h^{3/2})$}
\psfrag{O(2)}{$O(h^2)$}
\psfrag{O(5/2)}{$O(h^{5/2})$}
\psfrag{error(u) in H1}{$\norm{\nabla(u-u_h)}{L^2(\Omega)}$}
\psfrag{error(u) in L2}{$\norm{u-u_h}{L^2(\Omega)}$}
\psfrag{error(u) in L2 along Gamma}{$\norm{u-u_h}{L^2(S_h)}$}
\psfrag{error(phi) in L2}{$\norm{h^{1/2}(\varphi-\varphi_h)}{L^2(\Gamma)}$}
\psfrag{error}{error}
\psfrag{1/h}{$1/h$}
\centering
\includegraphics[width=.7\textwidth]{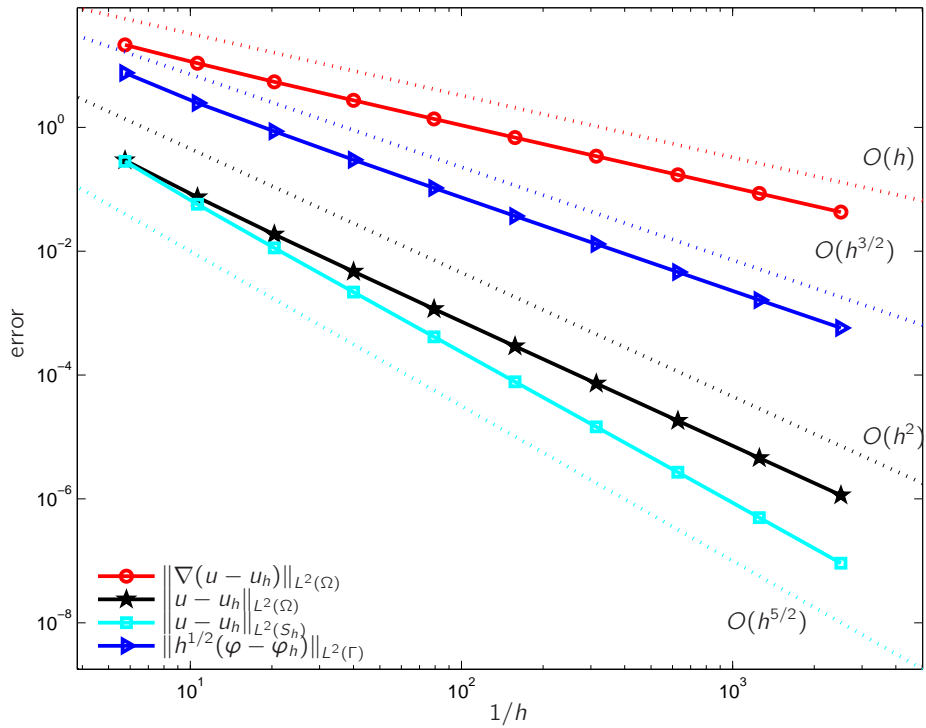}
\caption{Performance of lowest-order FEM-BEM ($k=1$) with ${\alpha = 3/2}$ in~\eqref{eq:ex:solution} for refinement levels $\ell=0,\dots,9$.}
\label{fig:lshape:p0s1}
\end{figure}
\begin{figure}[t]
\psfrag{O(3)}{$O(h^3)$}
\psfrag{O(7/2)}{$O(h^{7/2})$}
\psfrag{O(2)}{$O(h^2)$}
\psfrag{O(5/2)}{$O(h^{5/2})$}
\psfrag{error(u) in H1}{$\norm{\nabla(u-u_h)}{L^2(\Omega)}$}
\psfrag{error(u) in L2}{$\norm{u-u_h}{L^2(\Omega)}$}
\psfrag{error(u) in L2 along Gamma}{$\norm{u-u_h}{L^2(S_h)}$}
\psfrag{error(phi) in L2}{$\norm{h^{1/2}(\varphi-\varphi_h)}{L^2(\Gamma)}$}
\psfrag{error}{error}
\psfrag{1/h}{$1/h$}
\centering
\includegraphics[width=.7\textwidth]{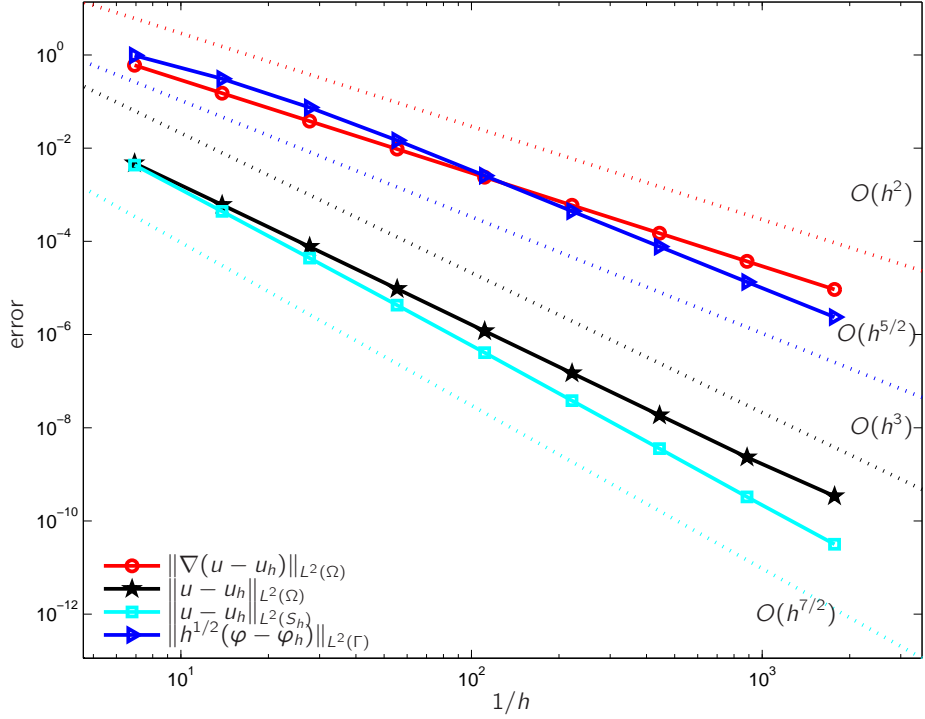}
\caption{Performance of higher-order FEM-BEM ($k=2$) with ${\alpha = 5/2}$ in~\eqref{eq:ex:solution} for refinement levels $\ell=0,\dots,8$.}
\label{fig:lshape:p1s2}
\end{figure}

This section underlines the theoretical results of 
Theorem~\ref{thm:approximation-of-varphi} and 
Theorem~\ref{thm:variational-crimes}. Throughout, we consider
the L-shaped domain 
$$\Omega = (-0.2,0.2)\times (0,0.4)\setminus [-0.2,0]\times[0,0.2]$$
visualized in Figure~\ref{fig:lshape}. For an $\alpha>0$, we prescribe the exact solution
$(u,u^\extr)$ of~\eqref{eq:strongform} as 
\begin{subequations}
\label{eq:ex:solution}
\begin{align}
 u(x,y) &= 1000\cdot{\rm Re}\big(z^\alpha\big)
 &&\text{with } z = x+\bi y,\\
 u^\extr(x,y) &= {\rm Re}\,\big(1/(z-v)\big)
 &&\text{with }z=x+\bi y\text{ and }v = 0.1 (1 + \bi),
\end{align}
\end{subequations}
where $\opA$ is the identity, i.e., $-\nabla\cdot(\opA\nabla u)=-\Delta u$.
We note that $f:=-\Delta u = 0$ in $\Omega$ and $-\Delta u^\extr = 0$ 
in $\mathbb R\backslash\overline\Omega$. The data $f$, $u_0$, and $\phi_0$
in~\eqref{eq:strongform} are calculated from the prescribed exact solutions.
The exterior solution $u^\extr$ is smooth in $\mathbb R\backslash\overline\Omega$,
while $u$ has a singularity at the lower left corner $(0,0)$ 
of $\Omega$, whose strength is controlled by $\alpha$. Away from this singularity, 
$u$ is smooth, in particular, near 
the reentrant corner $(0,0.2)$ of $\Omega$, where the solutions of the dual
problem~\eqref{eq:dual-problem} and the bidual problem~\eqref{eq:bidual-problem} have 
a singularity.

With $\varphi=\partial_n^\extr u^\extr$, the pair $(u,\varphi)$
is the unique solution of~\eqref{eq:rhs-weak}--\eqref{eq:weak-form}. By our 
choice of $u^\extr$, the function $\varphi$ is edgewise smooth and satisfies, 
in particular, all regularity requirements of the present work. 

\begin{remark}
\label{remk:regularity-of-solution-numerical-example}
\begin{enumerate}
\item 
For $\alpha \not\in\BbbN$, the solution $u$ in $\Omega$ has the regularity
$u \in B^{1+\alpha}_{2,\infty}(\Omega)$. We will use $\alpha = 3/2$ for the case
$k=1$ and $\alpha = 5/2$ with $k = 2$; that is, $k +3/2 = 1 +\alpha$ and
$u \in B^{k+3/2}_{2,\infty}(\Omega)$. 
This regularity is marginally lower than what is required in 
Theorem~\ref{thm:approximation-of-varphi}. Nevertheless, up to logarithmic terms   
(cf.~Lemma~\ref{lemma:B12infty-trace} and the proof of 
Theorem~\ref{thm:variational-crimes}) we expect the results 
of Theorem~\ref{thm:approximation-of-varphi} to hold. 
\item 
The scaling factor $1000$ in our definition of the solution $u$ is chosen to 
ensure that the approximation of $u$ dominates also {\em preasymptotically} 
in the standard {\sl a priori} estimate 
$$
\|u - u_h\|_{H^1(\Omega)} + \|\varphi - \varphi_h\|_{H^{-1/2}(\Gamma)} 
\lesssim \inf_{v \in V_h} \|u - v\|_{H^1(\Omega)} + \inf_{\mu \in M_h} \|\varphi - \mu\|_{H^{-1/2}(\Gamma)}, 
$$
in particular for the case $k = 2$. A calculation gives: 
\begin{align*}
\mbox{ $\alpha = 5/2$:} & \qquad 
|u|_{H^3(\Omega)} \approx 2900, \qquad 
|\varphi|_{H^2(\Gamma)} \approx 867, \\
\mbox{ $\alpha = 3/2$:} & \qquad 
|u|_{H^2(\Omega)} \approx 828, \qquad 
|\varphi|_{H^1(\Gamma)} \approx 35. 
\end{align*}
\eremk
\end{enumerate}
\end{remark}

Throughout, we consider a sequence of triangulations 
$\TT_\Omega$ that are obtained by uniform red refinement of the initial 
triangulation $\TT_\Omega$ depicted in Figure~\ref{fig:lshape}. The boundary mesh
$\TT_\Gamma$ is always induced by the volume mesh 
$\TT_\Gamma = \TT_\Omega|_\Gamma$. The computations consider
meshes with $\#\TT_\Omega = 12\cdot4^\ell$ triangles and
$\#\TT_\Gamma = 8\cdot2^\ell$ boundary segments, i.e., 
mesh size $h = 0.2\cdot2^{-\ell}$ for refinement levels $\ell=0$, $1$, $2,\dots$.
Our computations are performed in {\sc MATLAB} by means of the BEM library
HILBERT~\cite{aurada-ebner-feischl-ferraz-leite-fuehrer-goldenits-karkulik-mayr-praetorius14}. 
The linear systems are solved with the {\sc MATLAB} backslash operator.
The matrix entries, in particular of the BEM matrices, are computed analytically. 

In our numerical experiments,
we let $(u_h,\varphi_h)\in X_h = \mathcal S^{k,1}(\mathcal T_\Omega)\times\mathcal S^{k-1,0}(\mathcal T_\Gamma)$ 
be the Galerkin solution of
\begin{subequations}\label{eq:81}
\begin{eqnarray}\label{eq:81a}
\widetilde a(u,v) - b(v,\varphi) &=& \langle \phi_0 + \hyp \Pi_hu_0,v\rangle_\Gamma \qquad \forall v \in V_h, \\
b(u,\psi) + c(\varphi,\psi) &=& \langle \psi,(1/2-\dlo)\Pi_hu_0\rangle_\Gamma) \qquad \forall \psi \in M_h.
\end{eqnarray}
\end{subequations}
Here, $\Pi_h:L^2(\Gamma)\to\mathcal S^{k,1}(\mathcal T_\Gamma)$ denotes the
$L^2$-orthogonal projection. We note that $\mathcal S^{k,1}(\mathcal T_\Gamma)
= \big\{v|_\Gamma\,:\,v\in\mathcal S^{k,1}(\mathcal T_\Omega)\big\}$ is 
the discrete trace space and that this choice satisfies the assumption~\eqref{eq:cor:variational-crimes-100}
of Theorem~\ref{thm:variational-crimes} provided that $u_0 = u-u^\extr$ and
$\phi_0 = (\nabla u - \nabla u^\extr)\cdot n$ are sufficiently
smooth, i.e., $\alpha >  k+1/2$ in~\eqref{eq:ex:solution}. Our actual choice $\alpha = k+1/2$ 
corresponds to a limiting case, for which we still expect the convergence results to hold, 
up to logarithmic terms (cf. also Remark~\ref{remk:regularity-of-solution-numerical-example}).
The term $\langle \phi_0,v\rangle$ in 
(\ref{eq:81a}) is treated by a high order Gaussian quadrature rule. 


In our experiments, we consider the lowest-order case $k=1$ as well as $k=2$
and plot the errors 
\begin{itemize}
\item $\norm{u-u_h}{L^2(\Omega)}$,
\item $\norm{\nabla(u-u_h)}{L^2(\Omega)}$,
\item $\norm{h^{1/2}(\varphi-\varphi_h)}{L^2(\Gamma)}$, 
\item $\norm{u-u_h}{L^2(S_h)}$, where 
$S_h := \bigcup\big\{T\in\TT_\Omega\,:\,\overline T\cap\Gamma\neq\emptyset\big\}$,
\end{itemize}
versus the mesh size $1/h$.
Lemma~\ref{lemma:standard-variational-crimes}  provides 
$\norm{\nabla(u-u_h)}{L^2(\Omega)} = O(h^k)$ and 
Theorem~\ref{thm:variational-crimes} guarantees 
(up to logarithmic terms) 
$\norm{h^{1/2}(\varphi-\varphi_h)}{L^2(\Gamma)}
= O(h^{k+1/2})$ as well as 
$\norm{u-u_h}{L^2(S_h)} = O(h^{k+3/2})$. These rates are 
observed numerically in Figures~\ref{fig:lshape:p0s1} and \ref{fig:lshape:p1s2}
for the cases $k=1$, $2$, respectively.
%


\acks
The first two authors acknowledge support by the Austrian Science Fund (FWF) under grants 
W1245 (doctoral program ``dissipation and dispersion in nonlinear PDEs''; JMM, DP) and 
P21732 (project ``Adaptive Boundary Element Methods'', DP). 
\bibliography{nummech,dpr}
\bibliographystyle{plain}
\end{document}